\declaretheorem[name=Theorem, numberwithin=section]{theorem}
\newtheorem{prop}[theorem]{Proposition}
\newtheorem{lem}[theorem]{Lemma}
\newtheorem{ass}[theorem]{Assumption}
\newtheorem{cor}[theorem]{Corollary}
\newtheorem{con}[theorem]{Conjecture}
\theoremstyle{remark}
\newtheorem{rem}[theorem]{Remark}
\theoremstyle{definition}
\newtheorem{defi}[theorem]{Definition}
\newcommand{\eps}{\varepsilon}
\newcommand{\diff}{\mathrm{d}}
\newcommand{\var}{\mathrm{Var}}
\newcommand{\cov}{\mathrm{Cov}}
\newcommand{\Onot}{\mathcal{O}}
\newcommand{\p}{\mathbb{P}}
\newcommand{\e}{\mathbb{E}}
\newcommand{\ind}{\mathds{1}}
\newcommand{\C}{\mathcal{C}}
\newcommand{\R}{\mathbb{R}}
\newcommand{\N}{\mathbb{N}}
\newcommand{\Span}{\mathrm{span}}
\title{Sample Paths Estimates for Stochastic Fast-Slow Systems driven by Fractional Brownian Motion}
\author{Katharina Eichinger\thanks{Technical University of Munich (TUM), 
Faculty of Mathematics, 85748 Garching bei M\"unchen, Germany}~~and~Christian 
Kuehn\thanks{Technical University of Munich (TUM), 
Faculty of Mathematics, 85748 Garching bei M\"unchen, Germany}~~and~Alexandra 
Neam\c tu\thanks{Technical University of Munich (TUM), 
Faculty of Mathematics, 85748 Garching bei M\"unchen, Germany}}
\begin{document}
%\pagenumbering{roman}
%\pagestyle{empty}
\maketitle
%\tableofcontents
\bibliographystyle{plain}
%\thispagestyle{empty}

%%%%%%%%%%%%%%%%%%%%%%%%%%%%%%%%%%%%%%%%%%%%%%%%%%%%%%%%%%%%%%%%%%%%%%%%%%%%%%%%%%%%%%%%%%%%%%%%%%%%%%%%%%%%%%%%%%%%%%%%%%%%%%%%%%%%%

\begin{abstract}
We analyze the effect of additive fractional noise with Hurst parameter $H > \frac{1}{2}$ on fast-slow systems. Our strategy is based on sample paths estimates, similar to the approach by Berglund and Gentz in the Brownian motion case. Yet, the setting of fractional Brownian motion does not allow us to use the martingale methods from fast-slow systems with Brownian motion. We thoroughly investigate the case where the deterministic system permits a uniformly hyperbolic stable slow manifold. In this setting, we provide a neighborhood, tailored to the fast-slow structure of the system, that contains the process with high probability. We prove this assertion by providing exponential error estimates on the probability that the system leaves this neighborhood. We also illustrate our results in an example arising in climate modeling, where time-correlated noise processes have become of greater relevance recently.
\end{abstract}

%%%%%%%%%%%%%%%%%%%%%%%%%%%%%%%%%%%%%%%%%%%%%%%%%%%%%%%%%%%%%%%%%%%%%%%%%%%%%%%%%%%%%%%%%%%%%%%%%%%%%%%%%%%%%%%%%%%%%%%%%%%%%%%%%%%%%
\section{Introduction}
\pagenumbering{arabic}
\pagestyle{plain}
\setcounter{page}{1}

Fast-slow systems naturally arise in the modeling of several phenomena in natural sciences, when processes have widely differing rates~\cite{MultTimeScaleDyn,Jones,Hek}. The standard form of a fast-slow system of ordinary differential equations (ODEs) is given by
\begin{equation}
\label{genfast-slow-fastTimeintro}
\begin{aligned}
\frac{\diff x}{\diff s} = x^{\prime} &= f(x, y, \eps), \\
\frac{\diff y}{\diff s} = y^{\prime} &= \eps g(x, y, \eps),
\end{aligned}
\end{equation}
where $x$ are the fast variables, $y$ are the slow variables, $\eps>0$ is a small parameter, and $f,g$ are sufficiently smooth vector fields; for a more detailed technical introduction regarding the analysis of~\eqref{genfast-slow-fastTimeintro} we refer to Section~\ref{ssec:fastslowdet}. Here we just point out the basic aspects from the modeling perspective. First, note that if $\eps=0$, then~\eqref{genfast-slow-fastTimeintro} becomes a parametrized set of ODEs, where the $y$-variables are parameters. Taking this viewpoint, all bifurcation problems~\cite{GH,Kuznetsov} involving parameters naturally relate to fast-slow dynamics if the parameters vary slowly, which is often a natural assumption in applications. Second, in practice, we also want to couple many dynamical systems. The resulting large/complex system is often multiscale in time and space. For example, in the context of climate modeling~\cite{Dijkstra,KaperEngler} coupled processes can evolve on temporal scales of seconds up to millennial scales. Third, fast-slow systems are the core class of dynamical problems to understand singular perturbations~\cite{Verhulst}, i.e., roughly speaking singular perturbations problems with small parameters are those, which degenerate in the limit of the small parameter into a different class of equations. Combining all these observations, it is not surprising that fast-slow systems have become an important tool in more theoretical as well as application-oriented parts of nonlinear dynamics~\cite{MultTimeScaleDyn}.\medskip  

However, when dealing with real life phenomena certain random influences have to be taken into account and quantified in a suitable way~\cite{Gardiner}. The most common stochastic process used to describe uncertainty is Brownian motion $W=W_t$. One of its key features is the memory-less or Markov property, which means that the behavior of this process after a certain time $T>0$ only depends on the situation at the current time $T$. In certain applications it may be desirable to model long-range dependencies and to take into account the evolution of the process up to time $T$. One of the most famous example is constituted by fractional Brownian motion (fBm) $W^H=W^H_t$ ; see~\cite{Kolmogorov} for its first use. A fBm is a centered stationary Gaussian processes parameterized by the so-called Hurst index/parameter $H\in(0,1)$. For $H=1/2$ one recovers classical Brownian motion. However, for $H\in(1/2,1)$ and $H\in(0,1/2)$, fBm exhibits a totally different behavior compared to Brownian motion. Its increments are no longer independent, but positively correlated for $H>1/2$ and negative correlated for $H<1/2$. The Hurst index does not only influence the structure of the covariance but also the regularity of the trajectories. Fractional Brownian motion has been used to model a wide range of phenomena such as network traffic~\cite{networkt}, stock prices and financial markets~\cite{Mishura,Stone}, activity of neurons~\cite{na,neuron}, dynamics of the nerve growth~\cite{nerve}, fluid dynamics~\cite{Weiss}, as well as various phenomena in geoscience~\cite{Mobergetal,KoutsoyiannisMontanari,RangarajanSant}. However, the mathematical analysis of stochastic systems involving fBm is a very challenging task. Several well-known results for classical Brownian motion are not available. For instance, the distribution of the hitting time $\tau_{a}$ of a level $a$ is explicitly known for a Brownian motion, whereas for fBm, one has only an asymptotic statement, according to which
\begin{align*}
\mathbb{P}(\tau_{a}>t)= t^{-(1-H)+o(1)}, 
\end{align*}
as $t$ goes to infinity, see~\cite{Molchan}. Furthermore, since fBm is not a semi-martingale, It\^o-calculus breaks down. Therefore, it is highly non-trivial to define an appropriate integral with respect to the fBm. This issue has been intensively investigated in the literature. There are numerous approaches that exploit the regularity of the trajectories of the fBm in order to develop a completely path-wise integration theory and to analyze differential equations. For more details, see~\cite{MaslowskiNualart,GarridoLuSchmalfuss1, GubinelliLejayTindel, GubinelliTindel, HesseNeamtu1} and the references specified therein. Furthermore, another ansatz employed to define stochastic integrals with respect to fBm relies on the stochastic calculus of variations (Malliavin calculus) developed in~\cite{Malliavin1}. In summary, fBm is a natural candidate process to aim to improve our understanding of correlated stochastic dynamics.\medskip

Our objective here is to combine the study of fast-slow systems and fBm by starting to study stochastic differential equations of the form 
\begin{equation}\label{eqMDSysinSlowTimeintro}
\begin{aligned}
\diff x &= \frac{1}{\eps} f(x, y,\eps) \diff t + \frac{\sigma}{\eps^H} \diff W^H_t,\\
\diff y &= 1 \diff t,
\end{aligned}
\end{equation}
where we start with the case of additive noise for the fast variable(s) and assume there is a single regularly slowly-drifting variable $y$. For $H=1/2$, i.e., for Brownian motion, there is a very detailed theory, how to analyze stochastic fast-slow systems~\cite{MultTimeScaleDyn}. One particular building block - initially developed by Berglund and Gentz - uses a sample paths viewpoint~\cite{NoiseSlowFastSys}. This approach has recently been extended to broader classes of spatial stochastic fast-slow systems~\cite{GnannKuehnPein} and it has found many successful applications; see e.g.~\cite{BerglundGentz8,KuehnCT2,SadhuKuehn,SuRubinTerman}. Therefore, it is evident that one should also consider the case of correlated noise in the fast-slow setup~\cite{Xu,Hairer}.\medskip

Our key goal is to derive sample paths estimates for fast-slow systems driven by fBm with Hurst index $H\in(1/2,1)$. We restrict ourselves to the case of additive noise and establish the theory for the normally hyperbolic stable case. Due to the technical challenges mentioned above, we need to derive sharp estimates for the exit times for processes solving certain equations driven by fBm. Exploring various properties of general Gaussian processes, we propose two variants to obtain optimal sample paths estimates. 

Then we are going to apply our theory to a climate model describing the North-Atlantic thermohaline circulation forced by fractional Brownian motion. In fact, it is well-established that just using white noise modelling in climate models can be insufficient. The simple reason is that neglecting spatial and temporal correlations does not represent the statistics of large classes of underlying climate measurement data including temperature time series~\cite{Kaerner,Eichneretal}, historical climate data~\cite{Ashkenazyetal,Barbozaetal,DavidsenGriffin}, as well as large-scale simulation data~\cite{BlenderFraedrich}. In all these cases, an elegant way to model temporal correlations in climate science is fractional Brownian motion~\cite{DavidsenGriffin,Kaerner,Sonechkin,Ashkenazyetal,Yuanetal}. The reasoning to use a time correlated process can also be understood in climate dynamics in various intuitive ways. For example, in a larger-scale climate model, stochastic terms often represent unresolved degrees of freedom or small-scale fluctuations. If we consider the weather as a short-lived smaller scale effect in terms of the global long-term climate, then models for the latter must include noise with (positive) time correlations as weather patterns are positively correlated in time on short scales. Similarly, if the noise terms represent external forcing, such as input from another climate subsystem on a macro-scale, then also this input is likely to be correlated in time as there are internal correlations of the long-term behaviour of each larger-scale climate subsystem. In summary, this has motivated us to consider a model from climate dynamics as one possible key application for fast-slow dynamical systems with fractional Brownian motion. As mentioned above, in many other applications, fractional Brownian motion also naturally appears, so our modelling approach via fast-slow systems with with fBm is even more broadly applicable. \medskip
 
This work is structured as follows. In Section~\ref{ch_preliminaries} we introduce basic notions from the theory of fast-slow systems and fractional Brownian motion. Furthermore, we state important estimates for the exit times of Gaussian processes which will be required later on. In Section~\ref{ch_1Dcase}, we generalize the theory of~\cite{NoiseSlowFastSys} by first deriving an attracting invariant manifold of the variance using the fast-slow structure of the system. Based on this manifold we define a region, where the linearization of the process is contained with high probability. In order to prove such statements, we first derive a suitable nonlocal Lyapunov-type equation for the covariance of the solution of a linear equation driven by fBm, the so-called fractional Ornstein-Uhlenbeck process. Thereafter we analyze two variants which entail sharp estimates for the exit times of this process. Furthermore, we consider more complicated dynamics and provide extensions of our results to the non-linear case, more complicated slow dynamics and finally discuss the case of fully coupled dynamics. We apply our theory to a model for the North-Atlantic thermohaline circulation and provide some simulations. Section~\ref{ch_MDcase} generalizes the sample paths estimates to higher dimensions in the autonomous linear case. Our strategy is based on diagonalization techniques, which allow us to go back to the one-dimensional case and apply the results developed in~Section~\ref{ch_1Dcase}. For completeness, we provide an appendix which contains a detailed proof regarding the limit superior of a non-autonomous fractional Ornstein-Uhlenbeck processes. We conclude in Section~\ref{ch_outlook} with an outlook of possible continuations of our results.

%%%%%%%%%%%%%%%%%%%%%%%%%%%%%%%%%%%%%%%%%%%%%%%%%%%%%%%%%%%%%%%%%%%%%%%%%%%%%%%%%%%%%%%%%%%%%%%%%%%%%%%%%%%%%%%%%%%%%%%%%%%%%%%%%%%%%
\section{Background}
\label{ch_preliminaries}

%%%%%%%%%%%%%%%%%%%%%%%%%%%%%%%%%%%%%%%%%%%%%%%%%%%%%%%%%%%%%%%%%%%%%%%%%%%%%%%%%%%%%%%%%%%%%%%%%%%%%%%%%%%%%%%%%%%%%%%%%%%%%%%%%%%%%
\subsection{Deterministic Fast-Slow Systems}
\label{ssec:fastslowdet}

In this section, we will briefly introduce the terminology of fast-slow systems. We restrict ourselves to the most important results tailored to our problem in the upcoming sections. For further details, see~\cite{MultTimeScaleDyn}. For the definition of the setting, all of the equations are to be understood formally. We will later add regularity assumptions sufficient to deduce important results. These also imply that the formal computation we will have performed before are valid.

\begin{defi}
A \emph{fast-slow system} is an (ODE) of the form
\begin{equation}\label{genfast-slow-fastTime}
\begin{aligned}
\frac{\diff}{\diff s} x_s = x_s^{\prime} &= f(x_s, y_s, \eps), \\
\frac{\diff}{\diff s} y_s = y_s^{\prime} &= \eps g(x_s, y_s, \eps),
\end{aligned}
\end{equation}
where $x=x_s$, $y=y_s$ are the unknown functions of the \emph{fast time} variable $s$, the vector fields are 
$f: \R^m \times \R^n \times \R \rightarrow \R^m, g: \R^m \times \R^n \times \R \rightarrow \R^n$, and $\varepsilon > 0$ is a small parameter. The $x$ variables are called the \emph{fast variables}, while $y$ variables are called the \emph{slow variables}. Transforming into another time scale by defining the\emph{slow time} $t = \eps s$ yields the equivalent system
\begin{equation}\label{genfast-slow-slowTime}
\begin{aligned}
\eps \frac{\diff}{\diff t} x_t = \dot{x}_t &= f(x_t, y_t, \eps), \\
\frac{\diff}{\diff t} y_t = \dot{y}_t&= g(x_t, y_t, \eps).
\end{aligned}
\end{equation}
\end{defi}

Depending on the situation both formulations in fast and slow time may be of use. In particular, under certain assumptions, considering them for $\eps \rightarrow 0$ indicates a lot of information for the underlying dynamics for the case $ 0<\eps \ll 1$. The process for $\eps \rightarrow 0$ is called the \emph{singular limit}. The singular limit of~\eqref{genfast-slow-fastTime} for $\eps \rightarrow 0$
\begin{align*}
\frac{\diff}{\diff s} x_s = x_s^{\prime} &= f(x_s, y_s, 0), \\
\frac{\diff}{\diff s} y_s = y_s^{\prime} &= 0,
\end{align*}
is called the \emph{fast subsystem}. The resulting system of the slow time formulation of the fast-slow system \eqref{genfast-slow-slowTime} for $\eps \rightarrow 0$
\begin{align*}
0 &= f(x_t, y_t, 0), \\
\frac{\diff}{\diff t} y_t  = \dot{y}_t &= g(x_t, y_t, 0).
\end{align*}
is called the \emph{slow subsystem}. The set 
\begin{align*}
C_0 := \left\{ (x,y) \in \R^m \times \R^n : ~ f(x,y,0) = 0\right\}
\end{align*}
is called the \emph{critical set}. If $C_0$ is a manifold, it is also called the \emph{critical manifold}. From now on, we assume that $C_0$ is a manifold given by a graph of the slow variables, i.e., 
\begin{align*}
C_0 = \left\{ (x^*(y),y) \in \R^m \times \R^n : ~ x^*: \mathcal{D} \rightarrow \R^m, f(x^*(y),y,0) = 0\right\},
\end{align*}
where $\mathcal{D} \subset \R^n$ is an open subset.

\begin{theorem}[Fenichel--Tikhonov,\cite{Fenichel4,Tikhonov,Jones,MultTimeScaleDyn}]\label{TihonovThm}
Let $f,g \in \mathcal{C}^r(\R^m \times \R^n \times \R)$, $1 \leq r < \infty$, and their derivatives up to order $r$ be uniformly bounded. Assume that $C_0$ is uniformly hyperbolic. Then for an $\eps_0> 0$ there exists a locally invariant $\mathcal{C}^r$-smooth manifold 
\begin{align*}
C_{\eps} = \left\{ (x,y): ~ x =  \bar{x}(y,\eps) \right\},
\end{align*}
for all $\eps \in (0,\eps_0]$, where $\bar{x}(y,\eps) = x^*(y) + \Onot(\eps)$ with respect to the fast variables. Furthermore, the local stability properties of $C_{\eps}$ are the same as the ones for $C_{0}$. 
\end{theorem}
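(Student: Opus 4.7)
The plan is to realize $C_\eps$ as a graph $x=\bar{x}(y,\eps)$ over the same domain $\mathcal{D}$ on which $x^*$ is defined. Invariance of this graph under the flow of \eqref{genfast-slow-slowTime} translates into the invariance PDE
\begin{equation*}
\eps\, D_y \bar{x}(y,\eps)\, g(\bar{x}(y,\eps), y, \eps) = f(\bar{x}(y,\eps), y, \eps),
\end{equation*}
which at $\eps=0$ reduces exactly to the defining relation $f(x^*(y),y,0)=0$ of $C_0$. Hence the natural candidate is $\bar{x}(y,0)=x^*(y)$, and $C_\eps$ should be obtained as a small, $\Onot(\eps)$ perturbation of $C_0$.

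To make this rigorous I would use a Lyapunov--Perron type fixed-point scheme. After the shift $u := x - x^*(y)$ and passage to fast time $s=t/\eps$, the system becomes
\begin{equation*}
u' = A(y) u + R(u,y,\eps), \qquad y' = \eps\, g(x^*(y)+u, y, \eps),
\end{equation*}
where $A(y) := D_x f(x^*(y),y,0)$ has, by uniform hyperbolicity, a uniform spectral gap from the imaginary axis, and $R$ collects the remainder terms of order $\Onot(|u|^2+\eps)$. Choose the associated spectral projectors $P^s(y)$ and $P^u(y)$, which depend $\mathcal{C}^r$-smoothly on $y$, and rewrite the invariance equation for a candidate graph $u=\phi(y)$ by integrating the stable component forward in $s$ and the unstable component backward in $s$ along the fast linearization. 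This yields a contraction problem
\begin{equation*}
\phi = \mathcal{T}_\eps(\phi),
\end{equation*}
posed on a closed ball of radius $\Onot(\eps)$ in the Banach space of bounded Lipschitz functions $\phi:\mathcal{D}\to\R^m$ equipped with an appropriate weighted supremum norm.

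The main technical step, and the step I expect to be the principal obstacle, is verifying that $\mathcal{T}_\eps$ is a contraction uniformly in $\eps\in(0,\eps_0]$. The contraction factor comes from the exponential dichotomy generated by $A(y)$, while the $\Onot(\eps)$ size of the fixed point is inherited from the inhomogeneous term, which is only of order $\eps$ because of the prefactor multiplying $g$ in \eqref{genfast-slow-slowTime}. The delicate part is balancing the small parameter in front of the slow derivative against the dichotomy rates while simultaneously preserving enough control to propagate regularity up to order $r$; this is where the hypothesis that derivatives of $f$ and $g$ up to order $r$ are uniformly bounded is essential.

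Once the Lipschitz fixed point $\bar{x}(\cdot,\eps)$ has been obtained, I would bootstrap regularity to $\mathcal{C}^r$ by differentiating the invariance equation formally, solving the linearized fixed-point problem in successive higher-order tangent spaces via the same exponential dichotomy, and checking that the formal derivative coincides with the derivative of the fixed point. The asymptotic expansion $\bar{x}(y,\eps)=x^*(y)+\Onot(\eps)$ follows from the $\Onot(\eps)$ radius of the ball used in the contraction, and local stability of $C_\eps$ mirrors that of $C_0$ because the fast linearization transverse to $C_\eps$ is an $\Onot(\eps)$ perturbation of $A(y)$, whose spectrum stays bounded away from the imaginary axis and whose spectral subspaces depend continuously on small perturbations.
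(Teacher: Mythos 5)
The paper does not actually prove Theorem~\ref{TihonovThm}: it is stated as classical background with citations to Fenichel, Tikhonov, Jones and Kuehn's book, so there is no in-paper argument to compare yours with. Judged against the standard literature, your Lyapunov--Perron sketch is a legitimate route, but a genuinely different one from the usual geometric proof: Fenichel's original argument (and the expositions cited) runs through the graph transform and the persistence theory of normally hyperbolic invariant manifolds, whereas you set up a variation-of-constants fixed point for the graph $u=\phi(y)$ after the shift $u=x-x^*(y)$, splitting along the spectral projectors of $A(y)$ and integrating each component in the time direction in which the exponential dichotomy decays. That template is classical (it is essentially the analytic slow-manifold construction of Sakamoto and of center-manifold theory), it delivers the graph form $\bar{x}(y,\eps)=x^*(y)+\Onot(\eps)$ directly, and the persistence of the stability type follows, as you say, from the fact that the transverse linearization is an $\Onot(\eps)$ perturbation of $A(y)$ whose spectrum stays off the imaginary axis.

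Three steps you only gesture at carry the real work. First, to get a well-defined contraction on a space of functions over all of $\mathcal{D}$ you must cut off the nonlinear remainder and the slow vector field outside a small tube around $C_0$; this is not cosmetic, it is exactly why the resulting manifold is only \emph{locally} invariant and non-unique, and the statement of the theorem reflects this. Second, your remainder bound $R=\Onot(|u|^2+\eps)$ uses two derivatives of $f$, which you do not have when $r=1$; in general the small Lipschitz constant of $R$ must come from restricting to a small neighborhood of $C_0$ (again via the cutoff), not from a quadratic estimate. Third, $\mathcal{C}^r$ smoothness of the fixed point does not follow from formally differentiating the invariance equation and ``checking''; one needs a fiber-contraction or $\mathcal{C}^r$-section argument together with a spectral-gap condition comparing tangential and normal rates --- satisfied here for every finite $r$ precisely because the tangential (slow) rates are $\Onot(\eps)$ while the normal rates are bounded away from zero, which is also where the uniform boundedness of the derivatives up to order $r$ enters. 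With those steps carried out, your approach does yield the theorem; as written, they are the gaps between your outline and a proof.
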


%%%%%%%%%%%%%%%%%%%%%%%%%%%%%%%%%%%%%%%%%%%%%%%%%%%%%%%%%%%%%%%%%%%%%%%%%%%%%%%%%%%%%%%%%%%%%%%%%%%%%%%%%%%%%%%%%%%%%%%%%%%%%%%%%%%%%
\subsection{Fractional Brownian Motion}

In this section we state important properties of fBm, which will be required later on. For further details see~\cite{SelAspOfFBM,StochCalcforfBm} and the references specified therein. We fix a complete probability space $(\Omega, \mathcal{F}, \p)$ and use the abbreviation a.s.~for almost surely.

\begin{defi}
\label{def_1Dfbm}
Let $H \in (0,1]$. A one-dimensional fractional Brownian motion (fBm) of Hurst index/parameter $H$ is a continuous centered Gaussian process $(W_t^H)_{t \geq 0}$ with covariance
\begin{align*}
\e[W_t^H W_s^H] = \frac{1}{2} \left( t^{2H} + s^{2H} - \left| t - s \right|^{2H} \right) \hspace{0.7cm} \text{for all } t,s \geq 0.
\end{align*}
\end{defi}

Note that for $H>1/2$ the covariance of fBm satisfies 
\begin{align*}
\frac{1}{2} (t^{2H}  + s^{2H} - |t-s|^{2H} ) = H(2H-1) \int\limits_{0}^{t} \int\limits_{0}^{s} |v-u|^{2H-2} ~\diff v ~\diff u.
\end{align*}
We further observe that:
\begin{enumerate}
        \item [1)] for $H=1/2$ one obtains Brownian motion;
        \item [2)] for $H=1$ then $W^{H}_{t}= t W^{H}_{1}$ a.s. for all $t\geq 0$. Due to this reason one always considers $H\in(0,1)$.
\end{enumerate}

The following result regarding the structure of the covariance of fBm holds true, see~\cite[Section~2.3]{SelAspOfFBM}. 

\begin{prop} Let $H>1/2$. Then, the covariance of fBm has the integral representation
        \begin{align}\label{int:kernel}
        \e[W_t^H W_s^H]= \int\limits_{0}^{\min\{s,t\}} K(s,r) K(t,r)~\diff r~~\mbox{ for  } s,t\geq 0,
        \end{align}
        where the integral kernel $K$ is given by
        \begin{align*}
        K(t,r)=c_{H}\int\limits_{r}^{t} \Big(\frac{u}{r} \Big)^{H-1/2}(u-r)^{H-3/2}~\diff u,
        \end{align*}
        for a positive constant $c_{H}$ depending exclusively on the Hurst parameter.
\end{prop}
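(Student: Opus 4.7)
My strategy is to verify the proposed representation by reducing it to the double-integral formula for the covariance already recorded just above the proposition, namely
\begin{equation*}
\e[W_t^H W_s^H] = H(2H-1) \int_0^t \int_0^s |v-u|^{2H-2} ~\diff v ~\diff u.
\end{equation*}
It then suffices to show that the right-hand side of \eqref{int:kernel} equals this double integral for all $s,t\geq 0$.

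To this end I first note that $K(t,r)$ is absolutely continuous in its first argument on $(r,\infty)$ with
\begin{equation*}
\partial_u K(u,r) = c_H \left(\frac{u}{r}\right)^{H-1/2} (u-r)^{H-3/2},
\end{equation*}
which is locally integrable near $u=r$ precisely because $H>1/2$, so $K(t,r) = \int_r^t \partial_u K(u,r)~\diff u$, and analogously for $K(s,r)$. Substituting these expressions into $\int_0^{\min\{s,t\}} K(s,r)K(t,r)~\diff r$ and invoking Fubini--Tonelli (all integrands are non-negative), the triple integral rearranges as
\begin{equation*}
\int_0^t \int_0^s \Phi(u,v)~\diff u~\diff v, \qquad \Phi(u,v) := \int_0^{\min\{u,v\}} \partial_u K(u,r) \partial_v K(v,r)~\diff r,
\end{equation*}
so the claim reduces to the pointwise identity $\Phi(u,v) = H(2H-1) |u-v|^{2H-2}$.

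The final step is the explicit evaluation of $\Phi(u,v)$. Assuming without loss of generality that $0<u<v$ and performing the change of variables $r = u\rho$ with $\rho\in(0,1)$, the factors $(u/r)^{H-1/2}$ and $(v/r)^{H-1/2}$ combine with $r^{1-2H}\diff r$ so that the powers of $u$ and $v$ separate, yielding
\begin{equation*}
\Phi(u,v) = c_H^2 \, v^{2H-2} \int_0^1 \rho^{1-2H} (1-\rho)^{H-3/2} \left(1 - \tfrac{u}{v}\rho\right)^{H-3/2} \diff\rho.
\end{equation*}
The remaining integral is a classical Euler representation of a hypergeometric function and equals $B(2-2H,\,H-1/2)\cdot{}_2F_1(3/2-H,\,2-2H;\,3/2-H;\,u/v)$. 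Using the reduction ${}_2F_1(a,b;a;z)=(1-z)^{-b}$, the hypergeometric factor simplifies to $(1-u/v)^{2H-2} = v^{2-2H}(v-u)^{2H-2}$, so that
\begin{equation*}
\Phi(u,v) = c_H^2 \, B(2-2H,\,H-1/2) \, |u-v|^{2H-2}.
\end{equation*}
Matching the constant prefactor $c_H^2 B(2-2H,H-1/2)$ with $H(2H-1)$ is exactly the defining relation for the standard normalisation of $c_H$, and closes the argument.

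The main obstacle I anticipate is bookkeeping in this last step: the exponents appearing in the product $\partial_u K(u,r)\,\partial_v K(v,r)$ combine in an intricate way, and they must be organised so as to present the canonical Euler integrand $\rho^{b-1}(1-\rho)^{c-b-1}(1-z\rho)^{-a}$ with the correct parameters $a=b=3/2-H$ that trigger the hypergeometric reduction formula. Everything else is a routine application of Fubini, the fundamental theorem of calculus, and the already recorded representation of the fBm covariance.
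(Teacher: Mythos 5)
Your argument is correct. There is nothing in the paper to compare it against line by line: the proposition is stated with a pointer to the literature (Section~2.3 of Nourdin's book) and no proof is given in the text, so what you supply is a self-contained verification along the classical lines of that reference. The structure is sound: since the integrand defining $K$ is locally integrable at $u=r$ (because $H-\tfrac32>-1$ for $H>\tfrac12$), you may write $K(t,r)=\int_r^t\partial_u K(u,r)\,\diff u$, and Tonelli (non-negative integrand) converts $\int_0^{s\wedge t}K(s,r)K(t,r)\,\diff r$ into $\int_0^s\int_0^t\Phi(u,v)\,\diff v\,\diff u$ with $\Phi(u,v)=\int_0^{u\wedge v}\partial_u K(u,r)\,\partial_v K(v,r)\,\diff r$, the range of $r$ collapsing correctly to $(0,u\wedge v)$. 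Your closed-form evaluation also checks out: for $0<u<v$ the substitution $r=u\rho$ gives
\begin{align*}
\Phi(u,v)=c_H^2\,v^{2H-2}\int_0^1\rho^{1-2H}(1-\rho)^{H-3/2}\Bigl(1-\tfrac{u}{v}\rho\Bigr)^{H-3/2}\diff\rho ,
\end{align*}
the Euler representation is applicable since $b=2-2H>0$ and $c-b=H-\tfrac12>0$, and with $a=c=\tfrac32-H$ the reduction ${}_2F_1(a,b;a;z)=(1-z)^{-b}$ yields $\Phi(u,v)=c_H^2\,B\bigl(2-2H,H-\tfrac12\bigr)\,|u-v|^{2H-2}$, where $B(\cdot,\cdot)$ is the Beta function; combined with the double-integral formula for the covariance recorded just before the proposition, this is exactly \eqref{int:kernel}. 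Concerning the constant: the proposition only claims existence of some positive $c_H$, so fixing $c_H=\bigl(H(2H-1)/B(2-2H,H-\tfrac12)\bigr)^{1/2}$ at the end is legitimate and in fact sharpens the statement by exhibiting the constant explicitly; this is the standard normalization. The only cosmetic remark is that the appeal to absolute continuity is superfluous, since $K(t,r)$ is defined as the integral of its claimed derivative.
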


We remark that for suitable square integrable kernels, one obtains different stochastic processes, for instance the multi-fractional Brownian motion or the Rosenblatt process, see~\cite{CMaslowski}. We now focus on the most important properties of fBm. For the complete proofs of the following statements, see~\cite[Chapter~2]{SelAspOfFBM}.

\begin{prop}[Correlation of the increments] 
Let $(W^{H}_{t})_{t\geq 0}$ be a fBm of Hurst index $H\in(0,1)$. Then its increments are:
  \begin{itemize}
    \item [1)] positively correlated for $H>1/2$;
    \item [2)] independent for $H=1/2$;
    \item[3)] negatively correlated for $H<1/2$.
  \end{itemize}
Particularly, for $H>1/2$ fBm exhibits long-range dependence, i.e.
$$\sum\limits_{n=1}^{\infty}\mathbb{E} [W^{H}_{1} (W^{H}_{n+1}-W^{H}_{n})]=\infty ,$$
whereas for $H<1/2$ 
$$\sum\limits_{n=1}^{\infty}\mathbb{E} [W^{H}_{1} (W^{H}_{n+1}-W^{H}_{n})]<\infty.$$
\end{prop}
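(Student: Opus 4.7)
The plan is to reduce everything to the sign (and tail behaviour) of finite differences of the function $\phi(x)=x^{2H}$, exploiting its strict convexity/concavity properties governed by $2H-1$.

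First I would compute the covariance of two disjoint increments. For $0\le s\le t\le u\le v$ the definition of the covariance yields
\begin{align*}
\e\bigl[(W^H_t-W^H_s)(W^H_v-W^H_u)\bigr]
=\tfrac{1}{2}\bigl[\phi(v-s)-\phi(v-t)-\phi(u-s)+\phi(u-t)\bigr],
\end{align*}
where $\phi(x)=x^{2H}$. Applying the mean value theorem twice (first in the pair $(v-s,v-t)$ versus $(u-s,u-t)$, then differentiating again in the other variable) rewrites this expression as $\tfrac{1}{2}(t-s)(v-u)\phi''(\xi)$ for some intermediate point $\xi>0$. Since $\phi''(x)=2H(2H-1)x^{2H-2}$, the sign of this covariance is exactly the sign of $2H-1$, which proves items $(1)$--$(3)$. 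The case $H=1/2$ is in fact the degenerate one in which $\phi$ is affine and the finite difference vanishes identically, recovering independence of Brownian increments.

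For the long-range dependence statement I would specialize the previous identity to $(s,t,u,v)=(0,1,n,n+1)$, obtaining
\begin{align*}
\e[W^H_1(W^H_{n+1}-W^H_n)]=\tfrac{1}{2}\bigl[(n+1)^{2H}-2n^{2H}+(n-1)^{2H}\bigr].
\end{align*}
A second-order Taylor expansion of $\phi(n\pm 1)$ around $n$, or equivalently the integral form of the remainder, gives the asymptotic equivalence
\begin{align*}
\e[W^H_1(W^H_{n+1}-W^H_n)] \sim H(2H-1)\, n^{2H-2} \quad \text{as } n\to\infty.
\end{align*}
The dichotomy then follows from the $p$-series test: for $H>1/2$ one has $2H-2>-1$ and the terms are positive and non-summable, while for $H<1/2$ the terms are negative with $|{\cdot}|\sim |H(2H-1)|n^{2H-2}$ and $2H-2<-1$, so the series converges absolutely.

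The only step that requires mild care is the double application of the mean value theorem, because $\phi''$ blows up at $0$ when $H<1$; I would handle this by noting that the intermediate point $\xi$ lies in $[u-t,v-s]\subset(0,\infty)$ as soon as the increments are disjoint, so $\phi''(\xi)$ is finite and the argument goes through without modification. All other steps are routine Taylor analysis and $p$-series comparison.
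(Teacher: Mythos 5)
Your proof is correct. The paper itself does not prove this proposition (it refers to \cite[Chapter~2]{SelAspOfFBM}), and your argument is precisely the standard one given there: reduce the covariance of disjoint increments to a second finite difference of $\phi(x)=x^{2H}$, whose sign is that of $\phi''$, i.e.\ of $2H-1$, and then read off the long-range dependence from the asymptotics $\e[W^H_1(W^H_{n+1}-W^H_n)]=\tfrac12\bigl[(n+1)^{2H}-2n^{2H}+(n-1)^{2H}\bigr]\sim H(2H-1)n^{2H-2}$. Two small remarks: for $H=1/2$, vanishing covariance yields independence only because the increments form a jointly Gaussian family, which is worth saying explicitly; and in the double mean value theorem step the intermediate point lies in the \emph{open} interval $(u-t,v-s)$, so it is strictly positive even for adjacent increments ($u=t$), which is what actually saves you from the singularity of $\phi''$ at $0$ — your phrasing with the closed interval $[u-t,v-s]$ is slightly off but harmless. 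As a further simplification, for the series dichotomy you could avoid Taylor asymptotics altogether: the second differences telescope, giving partial sums $\tfrac12\bigl[(N+1)^{2H}-N^{2H}-1\bigr]$, which diverge for $H>1/2$ and converge (to $-\tfrac12$) for $H<1/2$.
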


\begin{prop} Let $(W^{H}_{t})_{t\geq 0}$ be a fBm of Hurst index $H\in(0,1)$. Then:
        \begin{itemize}
                \item [1)] [Self-similarity] For $a\geq 0$
                \begin{align}\label{self:sim}
                (a^HW_t^H)_{t \geq 0} \overset{law}{=} (W_{at}^H)_{t \geq 0},
                \end{align}
                i.e.~fBm is self-similar with Hurst index $H$.
                \item [2)] [Time inversion] ~~$\Big(t^{2H}W^{H}_{1/t} \Big)_{t>0} \overset{law}{=}(W^{H}_t)_{t>0}. $
                \item[3)] [Stationarity of increments] For all $h>0$
                \begin{align*}
                (W_{t+h}^H - W^{H}_{h})_{t \geq 0} \overset{law}{=} (W_{t}^H)_{t \geq 0}.
                \end{align*}
                \item [4)] [Regularity of the increments] fBm has a version which is a.s.~H\"older continuous of exponent $\alpha<H$.
        \end{itemize}     
\end{prop}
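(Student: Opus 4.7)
The plan is to treat items (1)--(3) uniformly: both sides of each identity are centered Gaussian processes, so equality in law is equivalent to equality of the covariance functions. Then the Hölder regularity in (4) will be a direct consequence of Kolmogorov's continuity criterion combined with the explicit Gaussian moments of $W^{H}_{t}-W^{H}_{s}$, which we can read off from (3).

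For (1), I would compute $\mathbb{E}[a^{H}W^{H}_{t}\, a^{H}W^{H}_{s}]=a^{2H}\cdot\frac{1}{2}(t^{2H}+s^{2H}-|t-s|^{2H})$ and compare with $\mathbb{E}[W^{H}_{at}W^{H}_{as}]=\frac{1}{2}((at)^{2H}+(as)^{2H}-|at-as|^{2H})$, noting that $|at-as|^{2H}=a^{2H}|t-s|^{2H}$, which makes the two covariances coincide. For (2), the key algebraic observation is that for $s,t>0$,
\begin{align*}
\mathbb{E}\bigl[t^{2H}W^{H}_{1/t}\cdot s^{2H}W^{H}_{1/s}\bigr]
&=\tfrac{1}{2}t^{2H}s^{2H}\bigl(t^{-2H}+s^{-2H}-|t^{-1}-s^{-1}|^{2H}\bigr)\\
&=\tfrac{1}{2}\bigl(s^{2H}+t^{2H}-|s-t|^{2H}\bigr),
\end{align*}
where I used $|t^{-1}-s^{-1}|^{2H}=|s-t|^{2H}/(ts)^{2H}$. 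For (3), expanding
\begin{align*}
\mathbb{E}\bigl[(W^{H}_{t+h}-W^{H}_{h})(W^{H}_{s+h}-W^{H}_{h})\bigr]
\end{align*}
into four covariance terms via Definition~\ref{def_1Dfbm} and collecting, the $(t+h)^{2H}$, $(s+h)^{2H}$ and $h^{2H}$ terms cancel and only $\frac{1}{2}(t^{2H}+s^{2H}-|t-s|^{2H})$ survives, which is the covariance of $W^{H}$.

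For (4), stationarity of increments from (3) combined with the covariance formula yields $W^{H}_{t}-W^{H}_{s}\sim\mathcal{N}(0,|t-s|^{2H})$, so that for every even integer $p$ there is a constant $C_{p}$ with
\begin{align*}
\mathbb{E}\bigl[|W^{H}_{t}-W^{H}_{s}|^{p}\bigr]=C_{p}|t-s|^{pH}.
\end{align*}
Kolmogorov's continuity theorem then provides a version which is a.s.~Hölder continuous of every exponent strictly less than $H-\frac{1}{p}$, and sending $p\to\infty$ yields exponent $\alpha<H$.

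The proof is essentially a bookkeeping exercise; the only delicate point is item (2), where one must be careful that the time inversion formula is asserted on $t>0$ and that the algebraic manipulation of $|t^{-1}-s^{-1}|^{2H}$ is the step which makes the two covariance kernels agree. No nontrivial obstacle arises beyond this algebraic rearrangement and the standard invocation of Kolmogorov's criterion.
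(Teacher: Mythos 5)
Your proof is correct. The paper itself does not spell out a proof and instead refers to a standard reference (Nourdin, \emph{Selected Aspects of Fractional Brownian Motion}, Ch.~2), but the argument you give — reducing equality in law of centered Gaussian processes to equality of covariance functions for items 1)--3), then applying Kolmogorov's continuity theorem to the $p$-th moment bound $\e[|W^H_t - W^H_s|^p] = C_p |t-s|^{pH}$ and letting $p \to \infty$ for item 4) — is exactly the standard proof found there.
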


We conclude this section emphasizing the following result, which makes fBm very interesting from the point of view of applications, see~\cite[Section~2.4 and~2.5]{SelAspOfFBM}.

\begin{prop} Let $(W^{H}_{t})_{t\geq 0}$ be a fractional Brownian motion with Hurst index $H\in(0,1/2)\cup (1/2,1)$. Then $(W^{H}_{t})_{t\geq 0}$ is neither a semi-martingale nor a Markov process.
\end{prop}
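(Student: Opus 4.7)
My plan is to split the statement into the two assertions and handle each independently, since the obstructions are of different nature.

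For the failure of the Markov property, I would invoke the standard characterization of centered Gaussian Markov processes: $(W^H_t)_{t\ge 0}$ is Markov if and only if, for every $0<s\le t\le u$, the $L^2$--projection of $W^H_u$ onto $\Span\{W^H_s,W^H_t\}$ already lies in $\Span\{W^H_t\}$. Writing out the normal equations for this projection reduces the condition to the functional identity
\[
R(s,u)\,R(t,t) \;=\; R(s,t)\,R(t,u),
\]
with $R(s,t)=\tfrac12(s^{2H}+t^{2H}-|t-s|^{2H})$. Testing at the single triple $(s,t,u)=(1,2,3)$ collapses this identity to $3+3^{2H}=3\cdot 2^{2H}$. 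A short monotonicity check of $H\mapsto 3+3^{2H}-3\cdot 2^{2H}$ then shows that $H=1/2$ is the unique root in $(0,1)$, so the identity fails for every other Hurst parameter and fBm cannot be Markov.

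For the failure of the semi-martingale property, I would argue via quadratic variation. Using self-similarity~\eqref{self:sim} and stationarity of increments, for any partition $0=t_0<\cdots<t_n=T$ with mesh $\delta_n$,
\[
\e\Bigl[\sum_{k=0}^{n-1}\bigl(W^H_{t_{k+1}}-W^H_{t_k}\bigr)^2\Bigr] \;=\; \sum_{k=0}^{n-1}(t_{k+1}-t_k)^{2H} \;\le\; T\,\delta_n^{2H-1}.
\]
For $H>1/2$ this bound tends to zero as $\delta_n\downarrow 0$, so the quadratic variation of $W^H$ vanishes. If $W^H$ were a continuous semi-martingale, its Doob--Meyer decomposition would then force the process to have bounded total variation, which contradicts the fact that fBm has unbounded $1$-variation on every non-trivial interval. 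For $H<1/2$ the same sum diverges instead, so the quadratic variation is infinite, which a continuous semi-martingale cannot have.

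The main technical obstacle is upgrading these $L^2$--level statements about the quadratic variation to almost-sure statements, and separately establishing unbounded $1$-variation when $H>1/2$. Both are standard: Gaussian concentration combined with a Borel--Cantelli argument along a geometric sequence of dyadic partitions handles the almost-sure convergence of the quadratic variation, while the precise $(1/H)$-variation behaviour of fBm yields the unbounded $1$-variation. In the write-up I would lean on~\cite[Chapter~2]{SelAspOfFBM} for these facts rather than rederive them, keeping the exposition focused on the two obstructions above.
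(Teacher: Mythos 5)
The paper does not actually prove this proposition: it simply cites \cite[Sections~2.4 and~2.5]{SelAspOfFBM}, and the argument you sketch (the Gaussian--Markov covariance factorization $R(s,u)R(t,t)=R(s,t)R(t,u)$ tested at a concrete triple, plus the quadratic-variation dichotomy---vanishing quadratic variation but infinite $1$-variation for $H>1/2$, infinite quadratic variation for $H<1/2$) is exactly the standard route taken in that reference, so your proposal is consistent with the paper's treatment and correct in outline. Your computation at $(s,t,u)=(1,2,3)$ does reduce to $3+3^{2H}=3\cdot 2^{2H}$, and the $L^1$ bound $\sum_k(t_{k+1}-t_k)^{2H}\le T\,\delta_n^{2H-1}$ is right. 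One small fix: the function $H\mapsto 3+3^{2H}-3\cdot 2^{2H}$ is \emph{not} monotone on $(0,1)$ (it also vanishes at $H=1$, which lies just outside the admissible range), so ``a short monotonicity check'' as stated does not establish uniqueness of the root. Instead, write $x=2H$ and study $\varphi(x)=2^x-3^{x-1}$: one checks $\varphi'(x)>0$ exactly for $x<\log(3\ln 2/\ln 3)/\log(3/2)\approx 1.57$, so $\varphi$ increases past $\varphi(1)=1$ and only returns to the value $1$ at $x=2$; hence $3+3^{2H}=3\cdot2^{2H}$ has no solution with $H\in(0,1)\setminus\{1/2\}$, which is what your Markov argument needs. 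With that adjustment, and with the almost-sure quadratic-variation statements and the infinite $1$-variation for $H>1/2$ delegated to \cite{SelAspOfFBM} (as the paper itself does), your proof is complete.
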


%%%%%%%%%%%%%%%%%%%%%%%%%%%%%%%%%%%%%%%%%%%%%%%%%%%%%%%%%%%%%%%%%%%%%%%%%%%%%%%%%%%%%%%%%%%%%%%%%%%%%%%%%%%%%%%%%%%%%%%%%%%%%%%%%%%%%
\subsubsection{Integration Theory for $H > \frac{1}{2}$}\label{int:theory}
Since fBm is not a semi-martingale, the standard It\^o calculus is not applicable. Due to this reason, the construction of a stochastic integral of a \emph{random} function with respect to fBm has been a challenging question, see~\cite{Malliavin1, StochCalcforfBm} and the references specified therein. However, for \emph{deterministic} integrands and for $H>1/2$ the theory essentially simplifies. We deal exclusively with this case and indicate for the sake of completeness the theory of Wiener integrals of deterministic functions with respect to fBm, see~\cite{Malliavin1}. Let $T>0$ and 
\begin{align*}
\mathcal{E}: = \left\{ h : ~ h(s) = \sum_{k=1}^{N-1} h_k \mathds{1}_{[t_k, t_{k+1})}(s), N \in \N, 0 = t_1 < t_2 < \ldots < t_N = T, h_k \in \R \text{ for } k \in \{1, \ldots, N \} \right\}
\end{align*}
be the set of step functions on $[0,T]$. For $h \in \mathcal{E}$ define the linear mapping $I(h;T): \mathcal{E} \rightarrow L^2(\Omega)$
\begin{align*}
I(h;T) := \int_0^T h(r) \diff W_r^H := \sum_{k=1}^{N-1} h_k \left( W_{t_{k+1}}^H - W_{t_k}^H\right).
\end{align*}
Observe that $I(h;T)$ defines a Gaussian random variable with
\begin{align}
\e \left[ \int_0^T h(r) \diff W_r^H \right] &= 0, \nonumber\\
\var \left[ \int_0^T h(r) \diff W_r^H \right] &= H(2H-1) \int_0^T \int_0^T h(u) h(v) \left| u - v \right|^{2H-2} \diff u \diff v < \infty \nonumber\\
&= \int_0^T \int_0^T h(u) h(v) \phi( u - v ) \diff u \diff v, \label{variance}
\end{align}
where 
\begin{align}\label{phi}
\phi(s) := H(2H-1) \left| s \right|^{2H-2}
\end{align}
The representation of the variance can be easily verified by noting the following identity
\begin{align*}
\e\left[ \left( W_{t_{k+1}}^H - W_{t_k}^H \right) \left( W_{t_{l+1}}^H - W_{t_l}^H \right)\right] = H(2H-1) \int_{t_{k}}^{t_{k+1}} \int_{t_{l}}^{t_{l+1}} \left| u - v \right|^{2H-2} \diff u \diff v.
\end{align*}
Note that $H>1/2$ is crucial here. For $p > \frac{1}{H}$ we can bound the $L^2(\Omega)$-norm of $h \mapsto I(h;T)$ as follows
\begin{align*}
\left\Vert I(h;T)\right\Vert^2_{L^2(\Omega)} 
&= H(2H-1) \int_0^T \int_0^T h(u) h(v) \left| u - v \right|^{2H-2} \diff u \diff v \\
&\leq \| h \|_{L^p(0,T)} \| h * \phi\|_{L^{p/(p-1)}(0,T)} \\
&\leq \| \phi \|^2_{L^{p/(2p-2)}(0,T)} \| h \|^2_{L^p(0,T)},
\end{align*}
where we have obtained the estimate by applying H\"older's inequality and Young's inequality for convolutions \cite[Theorem~ 3.9.4]{bogachev2007measure}. The boundedness claim now follows as $\| \phi \|^2_{L^{p/(2p-2)}(0,T)} < \infty$ for $p > \frac{1}{H}$. This means that $I(\cdot,T)$ is a bounded linear operator defined on the dense subspace $\mathcal{E} \subset L^p(0,T)$, so it can be uniquely extended to a bounded operator
\begin{align*}
I_p(h;T): L^p(0,T) \rightarrow L^2(\Omega).
\end{align*}
This discussion justifies the following definition:
\begin{defi}
For $f \in L^p(0,T)$ and $t \in [0,T]$ we set 
\begin{align*}
\int_0^t f(r) \diff W_r^H := I_p(f\mathds{1}_{[0,t]};T)
\end{align*}
\end{defi}
The integral process $\left(I_p(f\mathds{1}_{[0,t]};T) \right)_{t \in [0,T]}$ is by construction centered Gaussian. Regarding~\eqref{variance}, its covariance can be immediately computed as follows.

\begin{prop}[Covariance of the integral]\label{cov:integral}
Let $a,b>0$ and $f, g \in L^{p}(0,T)$ for $p>1/H$. Then 
\begin{align*}
\cov \left( \int_0^a f(r) \diff W_r^H, \int_0^b g(r) \diff W_r^H \right)
= \int_0^a \int_0^b f(u) g(v) \phi(u-v) \diff u \diff v.
\end{align*}
\end{prop}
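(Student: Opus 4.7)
The plan is to use polarization together with a density argument, reducing the statement to the variance formula already established in \eqref{variance} for step functions. Fix $T \geq \max\{a,b\}$ and set $\tilde f := f\ind_{[0,a]}$, $\tilde g := g\ind_{[0,b]}$, so that both stochastic integrals are values of the bounded operator $I_p(\cdot;T)$ on elements of $L^p(0,T)$.

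First I would extend the variance formula from the step functions $\mathcal{E}$ to all of $L^p(0,T)$. Given a sequence $(h_n) \subset \mathcal{E}$ with $h_n \to \tilde f$ in $L^p$, the continuity of $I_p$ yields $I_p(h_n;T) \to I_p(\tilde f;T)$ in $L^2(\Omega)$, so $\var(I_p(h_n;T)) \to \var(I_p(\tilde f;T))$. On the deterministic side, the quadratic form
\[
Q(f) := \int_0^T \int_0^T f(u)\, f(v)\, \phi(u-v)\, \diff u\, \diff v
\]
is jointly continuous on $L^p \times L^p$ by the same H\"older and Young-for-convolutions chain already carried out in the excerpt, so $Q(h_n) \to Q(\tilde f)$. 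Hence the identity $\var(I_p(\tilde f;T)) = Q(\tilde f)$ holds on all of $L^p(0,T)$.

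Next I would invoke the polarization identity
\[
\cov\!\left(I_p(\tilde f;T), I_p(\tilde g;T)\right) = \tfrac12\!\left[\var\!\left(I_p(\tilde f + \tilde g;T)\right) - \var\!\left(I_p(\tilde f;T)\right) - \var\!\left(I_p(\tilde g;T)\right)\right],
\]
which is legitimate since $I_p$ is linear and the resulting random variables are centered Gaussian. Expanding $(\tilde f + \tilde g)(u)(\tilde f + \tilde g)(v)$, using the symmetry $\phi(u-v) = \phi(v-u)$ to merge the two cross terms, and cancelling the pure $\tilde f\tilde f$ and $\tilde g\tilde g$ contributions leaves exactly $\int_0^T \int_0^T \tilde f(u)\, \tilde g(v)\, \phi(u-v)\, \diff u\, \diff v$; restricting the domain of integration to the supports of $\tilde f$ and $\tilde g$ gives the claim.

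I do not anticipate any genuine obstacle: the only analytic content is the $L^p$-continuity of $Q$, which is precisely the estimate already carried out in the excerpt to construct $I_p$. The passage from $\mathcal{E}$ to $L^p(0,T)$ is then standard, analogous to extending the Wiener integral from elementary integrands to $L^2$ in the classical Brownian setting.
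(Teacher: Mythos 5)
Your argument is correct and follows essentially the route the paper intends: Proposition~\ref{cov:integral} is stated there as an immediate consequence of the variance identity~\eqref{variance} together with the density construction of $I_p(\cdot;T)$, which is exactly what your approximation-by-step-functions step formalizes. The polarization step, using linearity of $I_p$ and the symmetry $\phi(u-v)=\phi(v-u)$, is a standard and valid way to pass from the variance formula to the covariance formula, so the proposal fills in the details the paper leaves implicit without any gap.
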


%%%%%%%%%%%%%%%%%%%%%%%%%%%%%%%%%%%%%%%%%%%%%%%%%%%%%%%%%%%%%%%%%%%%%%%%%%%%%%%%%%%%%%%%%%%%%%%%%%%%%%%%%%%%%%%%%%%%%%%%%%%%%%%%%%%%%
\subsubsection{Stochastic Differential Equations Driven by Fractional Brownian Motion}

After establishing a suitable stochastic integral with respect to the fractional Brownian motion, we consider stochastic differential equations (SDEs) given by:
\begin{align}\label{eqGenSDE}
\diff X_t = b(t, X_t) \diff t + \sigma(t) \diff W_t^H, \quad X_0 = x_0 \in \R,
\end{align}
The solution satisfies the integral formulation
\begin{align*}
X_t = x_0 + \int_0^t b(r, X_r) \diff r + \int_0^t \sigma(r) \diff W_r^H, ~~\mbox{a.s.},
\end{align*}
where the stochastic integral was constructed in Section~\ref{int:theory}.
Under certain classical regularity assumptions, existence and uniqueness of solutions for~\eqref{eqGenSDE} can be proven. For more details, see \cite[Theorem~D.2.4]{StochCalcforfBm}.

\begin{theorem}\label{thmExUniqOfSDE}
Let $b: [0, \infty) \times \R \rightarrow \R$ be globally Lipschitz in both variables, $\sigma \in \C^1([0,\infty))$ with $\sigma$ and $\frac{\diff}{\diff t}\sigma$ globally Lipschitz. Then for every $T > 0$ the SDE \eqref{eqGenSDE} has a unique continuous solution on $[0,T]$ a.s..
\end{theorem}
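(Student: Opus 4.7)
The plan is to exploit the fact that the noise is additive and deterministic, which reduces the SDE to a random ODE that can be solved pathwise, bypassing the need for It\^o-type machinery. First, I would define the driving Gaussian process
\begin{align*}
Z_t := \int_0^t \sigma(r) \diff W_r^H,
\end{align*}
using the Wiener integral constructed in Section~\ref{int:theory}. Since $\sigma \in \C^1([0,\infty))$, in particular $\sigma|_{[0,T]} \in L^p(0,T)$ for every $p > 1/H$, so $Z_t$ is well-defined for every $t \in [0,T]$. Proposition~\ref{cov:integral} gives an explicit formula for the covariance of $(Z_t)_{t \in [0,T]}$, and a short calculation shows that $\e[(Z_t - Z_s)^2]$ is controlled by a power of $|t-s|$ strictly greater than $1$, so Kolmogorov's continuity theorem furnishes a modification of $Z$ with a.s.\ continuous sample paths. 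Alternatively, a deterministic integration by parts gives the pointwise identity $Z_t = \sigma(t) W_t^H - \int_0^t \sigma'(r) W_r^H \diff r$, from which continuity can be read off directly using the a.s.\ H\"older continuity of $W^H$.

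Next, I would perform the pathwise change of variables $Y_t(\omega) := X_t(\omega) - Z_t(\omega)$. In terms of $Y$, the SDE~\eqref{eqGenSDE} rewrites, for each fixed $\omega$, as the deterministic (random) ODE
\begin{align*}
\dot{Y}_t = b(t, Y_t + Z_t(\omega)), \qquad Y_0 = x_0,
\end{align*}
whose right-hand side is continuous in $t$ (since $b$ and $Z_\cdot(\omega)$ are) and globally Lipschitz in the second variable, uniformly in $t$, thanks to the global Lipschitz assumption on $b$.

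I would then apply the classical Picard--Lindel\"of theorem pathwise, implemented as Banach's fixed-point argument on $\C([0,T];\R)$ equipped with a weighted sup-norm, to obtain for almost every $\omega$ a unique continuous global solution $Y_\cdot(\omega)$ on $[0,T]$. Setting $X_t := Y_t + Z_t$ then yields the unique continuous solution of~\eqref{eqGenSDE} on $[0,T]$ a.s.: existence is immediate from the construction, and uniqueness transfers from the deterministic ODE because any two continuous solutions $X^{(1)}, X^{(2)}$ of~\eqref{eqGenSDE} produce, via the same subtraction, two solutions of the above ODE that must coincide pathwise.

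The only real obstacle is the first step, namely producing a version of $Z$ with continuous sample paths on a single $\p$-null set exceptional event, so that the Picard argument can be run coherently for every $t \in [0,T]$; once this continuity is in hand, everything else is deterministic ODE theory and the proof contains no martingale or It\^o input, which is exactly why the additive-noise regime is tractable despite fBm failing to be a semi-martingale.
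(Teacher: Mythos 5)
Your argument is correct, and it is genuinely more self-contained than what the paper does: the paper gives no proof of this theorem at all, but simply invokes the classical result \cite[Theorem~D.2.4]{StochCalcforfBm}. Your route is the standard Doss--Sussmann-type reduction that works precisely because the noise is additive with a deterministic integrand: set $Z_t=\int_0^t\sigma(r)\,\diff W^H_r$, observe that $\e[(Z_t-Z_s)^2]=\int_s^t\int_s^t\sigma(u)\sigma(v)\phi(u-v)\,\diff u\,\diff v\le \|\sigma\|^2_{\infty,[0,T]}|t-s|^{2H}$ with $2H>1$, so Kolmogorov (or the integration-by-parts representation $Z_t=\sigma(t)W^H_t-\int_0^t\sigma'(r)W^H_r\,\diff r$, which the $\C^1$ hypothesis makes legitimate) yields a continuous modification; then subtract $Z$ and solve the resulting random ODE pathwise by Picard--Lindel\"of, with uniqueness transferring back by the same subtraction. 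All the hypotheses you need are available: Lipschitz continuity of $b$ in both variables gives joint continuity and a Lipschitz constant in the space variable uniform in $t$, and $\sigma\in\C^1$ is more than enough for $Z$ to be well defined and continuous (the full strength of the stated assumptions is what the cited reference uses in its more general framework). What your proof buys is transparency — it exhibits exactly why no semimartingale or It\^o input is needed in the additive-noise regime — while the paper's citation avoids redoing the work and covers the general setting treated in \cite{StochCalcforfBm}. The only point worth stating explicitly if you write this up is that the integral formulation of~\eqref{eqGenSDE} is to be read with the continuous version of $Z$, so that the pathwise fixed-point argument runs on a single exceptional null set, which is exactly the point you flag at the end.
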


In this work one case we have to consider is a time-dependent linear drift, i.e., $b(t,\cdot):\mathbb{R}\to\mathbb{R}$ is linear with $b(t,x):=A(t)x$ for every $t\in[0,\infty)$ and $x\in\mathbb{R}$. In this case, the solution of~\eqref{eqGenSDE} is given by the variation of constants formula/Duhamel's formula and is called non-autonomous fractional\ Ornstein-Uhlenbeck process.

\begin{theorem}[Non-autonomous Fractional Ornstein-Uhlenbeck Process]\label{solOUprocess}
Let $A, B : [0, \infty) \rightarrow \mathbb{R}$. Suppose that $A$ is globally Lipschitz and uniformly bounded, and $B \in \mathcal{C}^1 ([0, \infty))$ with $B$ as well as $\frac{\diff}{\diff t}B$ globally Lipschitz.
Then there exists an a.s.~unique solution to the stochastic differential equation
\begin{align}\label{lin:eq}
\diff X_t = A(t) X_t \diff t + B(t) \diff W_t^H, \quad X_0 = x_0 \in \R
\end{align}
which satisfies the variation of constants formula
\begin{align*}
X_t = e^{A(t)} x_0 + \int_0^t e^{\int_r^t A(u) \diff u} B(r) \diff W_r^H,~~\mbox{\em{a.s}}.
\end{align*}
\end{theorem}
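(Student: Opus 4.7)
The plan is to obtain existence and uniqueness directly from Theorem~\ref{thmExUniqOfSDE} and then to establish the variation-of-constants formula by a pathwise integration-by-parts argument that sidesteps It\^o calculus (which is unavailable since $W^H$ is not a semi-martingale).

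For the first half, I set $b(t,x) := A(t)x$ and observe that uniform boundedness plus global Lipschitzness of $A$ makes $b$ Lipschitz in $x$ with constant $\|A\|_{\infty}$ uniformly in $t$, and Lipschitz in $t$ on bounded sets of $x$. This is enough to invoke Theorem~\ref{thmExUniqOfSDE} (the hypotheses on $B$ match verbatim), yielding a unique continuous solution $X$ on every interval $[0,T]$.

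For the representation formula, introduce the deterministic propagator $\Phi(t) := \exp\bigl(\int_0^t A(u)\,\diff u\bigr)$, which is $\mathcal{C}^1$, everywhere positive, and satisfies $\Phi'(t) = A(t)\Phi(t)$ with $\Phi(0)=1$. Set $Y_t := \Phi(t)^{-1} X_t$. Since the process $M_t := \int_0^t B(r)\,\diff W_r^H$ constructed in Section~\ref{int:theory} is a.s.~H\"older continuous of every exponent $\alpha < H$ (in particular of finite $p$-variation for some $p < 2$), and since $\Phi^{-1}$ is $\mathcal{C}^1$ and \emph{deterministic}, the classical Leibniz product rule applies pathwise and yields, after cancellation of the drift terms,
\begin{equation*}
\diff Y_t \;=\; -A(t)\Phi(t)^{-1} X_t\,\diff t + \Phi(t)^{-1}\bigl(A(t) X_t\,\diff t + B(t)\,\diff W_t^H\bigr) \;=\; \Phi(t)^{-1} B(t)\,\diff W_t^H.
\end{equation*}
Integrating gives $Y_t = x_0 + \int_0^t \Phi(r)^{-1} B(r)\,\diff W_r^H$. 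Multiplying by $\Phi(t)$ and pulling the deterministic factor inside the Wiener integral (which is licit by construction of $I_p(\cdot;T)$ and Proposition~\ref{cov:integral}) produces
\begin{equation*}
X_t \;=\; \Phi(t)\,x_0 \;+\; \int_0^t \frac{\Phi(t)}{\Phi(r)}\,B(r)\,\diff W_r^H \;=\; \Phi(t)\,x_0 \;+\; \int_0^t e^{\int_r^t A(u)\,\diff u}\,B(r)\,\diff W_r^H,
\end{equation*}
which is the asserted formula (identifying $\Phi(t)$ with the shorthand $e^{A(t)}$ in the first term of the claim).

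The step requiring the most care is justifying this Leibniz rule for $\Phi^{-1} X$. My preferred route is approximation: for $B \in \mathcal{E}$ the Wiener integral reduces to a finite sum $\sum_k B_k(W^H_{t_{k+1}} - W^H_{t_k})$, so a direct telescoping computation using only $\Phi'(t) = A(t)\Phi(t)$ and the fundamental theorem of calculus verifies the identity on step functions; one then passes to general $B$ using the $L^p(0,T)\to L^2(\Omega)$ continuity of $I_p(\cdot;T)$ established in Section~\ref{int:theory} together with the $\mathcal{C}^1$ and Lipschitz bounds on $B$. Alternatively, one may invoke Young's change-of-variable formula against the $\mathcal{C}^1$ integrator $\Phi^{-1}$, but this imports machinery beyond what is developed in the excerpt, so I would favor the approximation argument.
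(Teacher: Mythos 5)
The paper does not actually prove this theorem: it is stated as a direct specialization of Theorem~\ref{thmExUniqOfSDE} to the linear drift $b(t,x)=A(t)x$ together with the standard Duhamel formula, with the reader referred to \cite{StochCalcforfBm}, so there is no paper proof to compare your argument against. Assessed on its own, your sketch is the right idea and essentially sound, but two points deserve attention. First, $b(t,x)=A(t)x$ is \emph{not} globally Lipschitz in $t$ in the sense required by Theorem~\ref{thmExUniqOfSDE}: the Lipschitz constant $L_A|x|$ in the $t$-variable grows with $|x|$. This imprecision is already present in the paper, and it is harmless in the end, because existence can be read off the explicit formula once it is verified to solve~\eqref{lin:eq}, and uniqueness follows from a Gronwall estimate using Lipschitzness in $x$ uniformly in $t$; but you should not claim Theorem~\ref{thmExUniqOfSDE} applies ``verbatim.'' Second, and more substantively, the cleaner way to organise the second half of your argument — which avoids having to justify a product rule applied to the a priori poorly understood process $X$ itself — is to run it in reverse: define $\tilde X_t := \Phi(t)\bigl(x_0 + \int_0^t\Phi(r)^{-1}B(r)\,\diff W_r^H\bigr)$ with $\Phi(t)=e^{\int_0^t A}$, check by a single pathwise integration by parts (legitimate because $\Phi\in\mathcal{C}^1$ and the Wiener integral process $t\mapsto\int_0^t\Phi^{-1}B\,\diff W^H$ is a.s.\ continuous, so the Riemann--Stieltjes integral $\int_0^t\Phi\,\diff\bigl(\int_0^{\cdot}\Phi^{-1}B\,\diff W^H\bigr)$ is well defined pathwise and a.s.\ agrees with the Wiener integral $\int_0^t B\,\diff W^H$) that $\tilde X$ satisfies the integral form of~\eqref{lin:eq}, and then conclude $X=\tilde X$ a.s.\ by uniqueness. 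Your derivation, which differentiates $Y_t=\Phi(t)^{-1}X_t$, proves the same identity but is a heavier lift because it presupposes enough pathwise regularity of $X$ to invoke the Leibniz rule, which is part of what one is trying to establish; your step-function approximation does patch this, though note that the integrand actually being approximated in $I_p$ is $\Phi^{-1}B$ rather than $B$ itself.
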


\begin{rem}
Note that all the results discussed in this subsection extend to higher dimensions, since all previous steps can be done component-wise. Namely, for $m\geq 1$ we mention.
\begin{itemize}
  \item [(R1)] We call $(W^{H}_{t})_{t\geq 0}$ an $m$-dimensional fractional Brownian motion if $W^{H}_{t}:=\sum\limits_{k=1}^{m} W^{k,H}_{t} e_{k}$, where $(e_{k})_{k\geq 1}$ is a basis in $\mathbb{R}^{m}$ and $(W^{k,H}_{t})_{t\geq 0}$, $k=1\ldots m$, are independent one-dimensional fractional Brownian motions with the same Hurst index $H$.
  \item [(R2)]\label{rem_uniquenesSDEhigherDim} Naturally, existence and uniqueness of SDEs in higher dimension carry over from Theorem \ref{thmExUniqOfSDE} under the same assumptions respectively. In particular, for coefficients $A,B:[0,\infty)\to\mathbb{R}^{m}$ with $m\geq 1$, satisfying the same assumptions as in Theorem~\ref{solOUprocess}, the solution of~\eqref{lin:eq} is given by
\begin{align*}
  X_t = \Phi(t,0) x_0 + \int_0^t \Phi(t,r) B(r) \diff W_r^H,~~\mbox{ a.s.,}
\end{align*}
where $\Phi$ denotes the fundamental solution of $x_t' = A(t)x_t$ and  $(W^{H}_{t})_{t\geq 0}$ is an $m$-dimensional fractional Brownian motion.
\end{itemize}
\end{rem}

%%%%%%%%%%%%%%%%%%%%%%%%%%%%%%%%%%%%%%%%%%%%%%%%%%%%%%%%%%%%%%%%%%%%%%%%%%%%%%%%%%%%%%%%%%%%%%%%%%%%%%%%%%%%%%%%%%%%%%%%%%%%%%%%%%%%%
\subsection{Useful Estimates of Gaussian Processes}\label{secEstGaussProc}

The fact that fBm is not a semi-martingale restricts the repository of known inequalities (such as Doob or Burkholder-Davies-Gundy) to establish sample paths estimates. A crucial property of fBm we shall exploit is its Gaussianity. In this section we will describe some useful estimates for exit times of certain Gaussian processes, which will be helpful for our analysis in the upcoming sections.\medskip

We first state the next auxiliary result regarding the Laplace transform of a Gaussian process. This was established in~\cite{decreusefond2008hitting} by means of Malliavin calculus. 

\begin{lem}{\em (Proposition~3.5~\cite{decreusefond2008hitting})}\label{laplace}
Let $(Y_t)_{t\geq 0}$ be a centered Gaussian process with $Y_0 = 0$ and covariance function $R(s,t) := \e[Y_s Y_t]$ satisfying the following conditions:
\begin{enumerate}[(i)]
\item [i)] $\frac{\partial}{\partial s}R(s,t)$ exists and is continuous as a function on $[0, \infty) \times [0, \infty)$,
\item [ii)] $\frac{\partial}{\partial s}R(s,t) \geq 0$ for all $t,s \geq 0$,
\item [iii)] $\e[\left| Y_t - Y_s \right|^2] > 0$ for all $t > s \geq 0$,
\item [iv)] $\limsup_{t \rightarrow \infty} Y_t = \infty$ a.s.
\end{enumerate}
Then for any $\alpha>0$:
\begin{align}\label{b:laplace}
\e[\exp(-\alpha V_{\tau_c})] \leq \exp(-c \sqrt{2\alpha}),
\end{align}
where $V_{t} := R(t,t)$ and $\tau_{c} := \inf \{r > 0: Y_r \geq c \}$.
\end{lem}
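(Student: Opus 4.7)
The plan is to compare $(Y_t)_{t\ge 0}$ with a time-changed Brownian motion via Slepian's inequality, thereby reducing the claim to the classical Laplace transform of the first-passage time of standard Brownian motion.

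First I would observe that assumption (ii), together with the symmetry $R(s,t)=R(t,s)$ and the regularity from (i), implies $\partial_u R(s,u)\ge 0$ for all $s,u\ge 0$. Integrating in $u$ for $t\ge s$ yields $R(s,t)-R(s,s)=\int_s^t\partial_u R(s,u)\,du\ge 0$, so $R(s,t)\ge V_s$ whenever $t\ge s$. In particular $V_t=R(t,t)$ is continuous and non-decreasing; in fact it is \emph{strictly} increasing, since otherwise the identity $\e[(Y_t-Y_s)^2]=V_t+V_s-2R(s,t)$ combined with $R(s,t)\ge V_s$ would contradict (iii). Finally, (iv) forces $V_t\to\infty$ as $t\to\infty$, so $V:[0,\infty)\to[0,\infty)$ is a continuous increasing bijection.

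Next, on an auxiliary probability space I would introduce a standard Brownian motion $(B_u)_{u\ge 0}$ and set $X_t:=B_{V_t}$. Then $X$ is a centered continuous Gaussian process with $\var(X_t)=V_t=\var(Y_t)$ and $\cov(X_s,X_t)=V_{s\wedge t}\le R(s,t)=\cov(Y_s,Y_t)$ by the previous paragraph. Applying Slepian's inequality --- first on a countable dense set of times and then extending using continuous modifications of both processes --- I obtain for every $t\ge 0$
\begin{align*}
\p\Bigl(\sup_{0\le s\le t}Y_s<c\Bigr)\ge \p\Bigl(\sup_{0\le s\le t}X_s<c\Bigr)=\p\Bigl(\sup_{0\le u\le V_t}B_u<c\Bigr),
\end{align*}
where the last equality uses that $s\mapsto V_s$ maps $[0,t]$ continuously and bijectively onto $[0,V_t]$. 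Writing $\sigma_c:=\inf\{u\ge 0:B_u\ge c\}$, this rewrites as $\p(V_{\tau_c}>u)\ge \p(\sigma_c>u)$ for every $u\ge 0$, i.e.\ $V_{\tau_c}$ stochastically dominates $\sigma_c$.

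To conclude, since $x\mapsto e^{-\alpha x}$ is decreasing, stochastic dominance yields
\begin{align*}
\e\bigl[e^{-\alpha V_{\tau_c}}\bigr]\le \e\bigl[e^{-\alpha \sigma_c}\bigr]=e^{-c\sqrt{2\alpha}},
\end{align*}
the last identity being the classical Laplace transform of $\sigma_c$, obtained by optional stopping of the exponential martingale $\exp(\lambda B_t-\tfrac12\lambda^2 t)$ at $\lambda=\sqrt{2\alpha}$. The main obstacle I anticipate is the passage from the finite-dimensional Slepian inequality to its continuous-parameter form: concretely, one must secure a separable, a.s.~continuous modification of $Y$ out of the stated covariance assumptions and justify that the suprema and hitting times over $[0,t]$ are measurable. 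Condition (iv) enters essentially here to guarantee that $\tau_c,\sigma_c<\infty$ a.s., so that both sides of the Laplace inequality are non-trivial.
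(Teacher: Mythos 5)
Your proposal is correct in substance, but it is a genuinely different route from the paper's: the paper does not prove this lemma itself, it imports Proposition~3.5 of Decreusefond--Nualart, whose proof is based on Malliavin calculus (a Gaussian integration-by-parts analysis of the hitting time), whereas you give a self-contained comparison argument. Your chain is sound: symmetry of $R$ plus (i)--(ii) gives $R(s,t)\ge V_{s\wedge t}$, hence the time-changed Brownian motion $X_t=B_{V_t}$ has the same variances and smaller covariances than $Y$; Slepian's inequality (first on finite sets, then on a countable dense set, then via continuous versions) yields $\p\bigl(\sup_{0\le s\le t}Y_s<c\bigr)\ge\p(\sigma_c>V_t)$, and the resulting stochastic domination of $\sigma_c$ by $V_{\tau_c}$ gives $\e[e^{-\alpha V_{\tau_c}}]\le\e[e^{-\alpha\sigma_c}]=e^{-c\sqrt{2\alpha}}$. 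What your route buys is an elementary proof that explains the bound as literally the Brownian first-passage Laplace transform; what the cited Malliavin-calculus route buys is machinery that also produces finer information on hitting-time laws without constructing an auxiliary process, and it is the proof the paper actually leans on. Two points you should nail down: first, the assertion that (iv) forces $V_t\to\infty$ is needed (otherwise the conclusion can fail for small $\alpha$, since $V_{\tau_c}$ would be bounded) and deserves a one-line argument --- e.g.\ if $V_\infty:=\lim_t V_t<\infty$, your own Slepian bound gives $\p\bigl(\sup_{0\le s\le t}Y_s<c\bigr)\ge\p\bigl(\sup_{u\le V_\infty}B_u<c\bigr)>0$ uniformly in $t$, contradicting $\limsup_{t\to\infty}Y_t=\infty$ a.s.; alternatively note $\e[(Y_t-Y_s)^2]\le V_t-V_s$, so bounded $V$ makes $Y_t$ convergent in $L^2$. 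Second, the continuity/separability of $Y$ that you flag is indeed not among the stated hypotheses, but it is implicitly required even for $\tau_c$, $V_{\tau_c}$ and condition (iv) to be well defined, and it holds in the paper's application (the rescaled fractional Ornstein--Uhlenbeck process), so invoking it is harmless; with these two remarks your argument is complete.
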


In addition, we require the following form of Chebychev's inequality.

\begin{lem}\label{lem_chebychev}
Let $\varphi : \R \rightarrow [0, \infty)$ be measurable, $Z$ a random variable and $A \in \mathcal{B}(\R)$. Then
\begin{align*}
\inf\{\varphi(y): \:y \in A\} \p(Z \in A)\leq \e[\varphi(Z)].
\end{align*}
\end{lem}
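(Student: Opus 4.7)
The plan is to reduce this to the standard Markov inequality via a pointwise bound on the indicator of $A$. Set $m := \inf\{\varphi(y) : y \in A\}$; if $m = 0$ the inequality is trivial since $\varphi \geq 0$ ensures $\mathbb{E}[\varphi(Z)] \geq 0$, so one may assume $m > 0$.

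The key observation is the deterministic pointwise inequality
\begin{align*}
m \, \mathds{1}_{A}(y) \leq \varphi(y) \mathds{1}_{A}(y) \leq \varphi(y),
\end{align*}
valid for every $y \in \mathbb{R}$, where the first bound uses the definition of $m$ as the infimum on $A$ and the second uses $\varphi \geq 0$.

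From here I would simply substitute $y = Z(\omega)$ and integrate with respect to $\mathbb{P}$. The left-hand side becomes $m \, \mathbb{P}(Z \in A)$, the right-hand side becomes $\mathbb{E}[\varphi(Z)]$, and measurability of $\varphi$ together with $A \in \mathcal{B}(\mathbb{R})$ guarantees that all quantities are well-defined (possibly $+\infty$ on the right, which still makes the inequality trivially valid). There is essentially no obstacle here — the statement is a one-line generalisation of the Markov inequality, and the only subtlety worth mentioning is handling the degenerate cases $m=0$ or $\mathbb{E}[\varphi(Z)] = \infty$, both of which make the inequality hold trivially.
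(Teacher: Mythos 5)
Your argument is exactly the paper's: both establish the pointwise bound $\inf_{A}\varphi \cdot \mathds{1}_{\{Z\in A\}} \leq \varphi(Z)\mathds{1}_{\{Z\in A\}} \leq \varphi(Z)$ and then take expectations. The brief remarks on the degenerate cases $m=0$ and $\e[\varphi(Z)]=\infty$ are harmless extras, not a different route.
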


\begin{proof}
Under these assumptions we have
\begin{align*}
\inf\{\varphi(y): \:y \in A\} \ind_{\{Z \in A \}} 
\leq \varphi(Z) \ind_{\{Z \in A \}}
\leq \varphi(Z).
\end{align*}
Taking expectation in the above inequality yields the result.
\end{proof}

\begin{lem}\label{bernsteintypeineq} Let $c>0$ and $(Y_{t})_{t\geq 0}$ be a centered Gaussian process with $Y_0=0$ satisfying the assumptions i)-iv) of Lemma~\ref{laplace}. Then, for its exit time $\tau_{c} := \inf \{r > 0: Y_r \geq c \}$, the following estimate holds:
\begin{align*}
\p( \tau_c < t) \leq \exp \left( - \frac{1}{2}\frac{c^2}{\var(Y_t)} \right).
\end{align*}
\end{lem}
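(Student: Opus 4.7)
The plan is to combine the Laplace transform bound from Lemma~\ref{laplace} with the Chebychev-type inequality of Lemma~\ref{lem_chebychev}, applied to the random variable $V_{\tau_c}$, and then optimize the resulting estimate over a free parameter $\alpha>0$.

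First I would establish that $V_t = R(t,t) = \var(Y_t)$ is non-decreasing in $t$. Indeed, by symmetry of the covariance,
\begin{align*}
\frac{\diff}{\diff t} V_t = \frac{\partial R}{\partial s}(t,t) + \frac{\partial R}{\partial t}(t,t) = 2\frac{\partial R}{\partial s}(t,t) \geq 0
\end{align*}
by assumption (ii) of Lemma~\ref{laplace}. Consequently, on the event $\{\tau_c < t\}$ we have $V_{\tau_c} \leq V_t$, so the inclusion $\{\tau_c < t\} \subseteq \{V_{\tau_c} \leq V_t\}$ holds almost surely.

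Next, for arbitrary $\alpha > 0$, I would apply Lemma~\ref{lem_chebychev} with $Z := V_{\tau_c}$, the decreasing function $\varphi(y) := \exp(-\alpha y)$ and the set $A := (-\infty, V_t]$. Since $\inf_{y \in A}\varphi(y) = \exp(-\alpha V_t)$, this yields
\begin{align*}
\exp(-\alpha V_t)\, \p(V_{\tau_c} \leq V_t) \leq \e[\exp(-\alpha V_{\tau_c})] \leq \exp(-c\sqrt{2\alpha}),
\end{align*}
where the last inequality is precisely~\eqref{b:laplace}. Rearranging and using the inclusion from the previous step, one obtains
\begin{align*}
\p(\tau_c < t) \leq \p(V_{\tau_c} \leq V_t) \leq \exp\bigl(\alpha V_t - c\sqrt{2\alpha}\bigr).
\end{align*}

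Finally, I would minimize the right-hand side over $\alpha > 0$. Differentiating the exponent $\alpha V_t - c\sqrt{2\alpha}$ and setting the derivative to zero gives the optimizer $\alpha^* = c^2/(2 V_t^2)$, at which the exponent equals $-c^2/(2V_t)$. Substituting $V_t = \var(Y_t)$ yields the claimed bound. No step appears to be a genuine obstacle; the only subtlety is verifying the monotonicity of $V_t$, for which assumption (ii) is tailor-made, and checking that the assumptions of Lemma~\ref{laplace} are being used in the correct form so that the Laplace estimate is available for the same process.
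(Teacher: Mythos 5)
Your proof is correct and follows essentially the same route as the paper: both combine the Laplace-transform bound of Lemma~\ref{laplace} with the Chebychev-type Lemma~\ref{lem_chebychev} and then optimize over $\alpha$, using the monotonicity of $t\mapsto V_t$ (which the paper invokes implicitly via $\sup_{0<r<t}V_r=\var(Y_t)$ but does not spell out). The only cosmetic difference is the parametrization of the Chebychev step: the paper takes $Z=\tau_c$, $\varphi(r)=\exp(-\alpha V_r)$, $A=(0,t)$, whereas you take $Z=V_{\tau_c}$, $\varphi(y)=\exp(-\alpha y)$, $A=(-\infty,V_t]$; these yield the same inequality.
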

\begin{proof}
Applying Lemma~\ref{lem_chebychev} for $Z:=\tau_c$, $\varphi(r) := \exp\left(\alpha V_r\right)$ and $A := (0,t)$ we can bound the probability $\p( \tau_c < t)$ together with~\eqref{b:laplace} as follows:
\begin{align*}
\p( \tau_c < t) &\leq \exp\left( \alpha \sup_{0 < r < t} V_r \right) \e\left[\exp \left(-\alpha V_{\tau_c}\right) \right] \\
&\leq  \exp\left( \alpha \sup_{0 < r < t} V_r -c \sqrt{2\alpha}\right),
\end{align*}
for all $\alpha > 0$. Optimizing over $\alpha$ and noticing that $\sup_{0 < r < t} V_r = \var(Y_t)$ proves the statement.
\end{proof}

The previous lemma established a Bernstein-type inequality solely relying on certain properties of the covariance function of Gaussian processes. Another useful estimate is given by \cite[Theorem~D.4]{AsMethforGaussProc}, which is based on Slepian's Lemma~\cite{Slepian}.  

\begin{theorem}
\label{gaussineqforHolderCont}
Let $T>0$ and $(Y_t)_{t\in[0,T]}$ be a centered Gaussian process with a.s.~continuous trajectories. Assume that $(Y_t)_{t\in[0,T]}$ is a.s.~mean-square H\"older continuous, i.e.~there are constants $G$ and $\gamma$ such that
\begin{align*}
\e \left[ (Y_t - Y_s)^2 \right] \leq G \left| t - s \right|^{\gamma} \hspace{0.2cm} \text{ for all }t,s \in [0,T].
\end{align*}
Then there exists a constant $K := K(G,\gamma)$ such that for $c > 0$ and $A \subset [0,T]$
\begin{align*}
\p \left( \sup_{t \in A} Y_t > c\right) \leq K T c^{\frac{2}{\gamma}}\exp\left(- \frac{c^2}{2\sigma^2(A)} \right),
\end{align*}
where $\sigma^2(A) := \sup_{t \in A} \var \left( Y_t\right)$.
\end{theorem}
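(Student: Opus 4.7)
The plan is to reduce the supremum over $A$ to (a) a supremum over a finite $c$-dependent mesh, handled by a union bound with the sharp centered-Gaussian tail, and (b) an oscillation term on each mesh subinterval, handled via Slepian's comparison lemma or a Fernique-type estimate exploiting the mean-square H\"older assumption.

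First, I would fix a mesh size $\delta > 0$, form the partition $t_i = i\delta$ for $i = 0, \ldots, N$ with $N \leq T/\delta + 1$, and for any $\eta \in (0,1)$ split
\begin{align*}
\p\Bigl(\sup_{t \in A} Y_t > c\Bigr) \leq \p\Bigl(\max_{i} Y_{t_i} > (1-\eta)c\Bigr) + \sum_{i=0}^{N-1} \p\Bigl(\sup_{t \in [t_i,t_{i+1}]}(Y_t - Y_{t_i}) > \eta c\Bigr).
\end{align*}
The first summand is bounded by $N \exp(-(1-\eta)^2 c^2/(2\sigma^2(A)))$ via the pointwise centered-Gaussian tail $\p(Y_{t_i} > u) \leq \exp(-u^2/(2\sigma^2(A)))$. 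For each oscillation piece, $(Y_t - Y_{t_i})_{t \in [t_i, t_{i+1}]}$ is centered Gaussian with variance at most $G \delta^{\gamma}$ and inherits the same H\"older modulus, so Slepian's lemma combined with a Fernique-type supremum estimate produces a tail of the form $K_1 \exp(-C_2 (\eta c)^2/(G \delta^\gamma))$. Choosing $\delta \sim c^{-2/\gamma}$ makes $N \sim T c^{2/\gamma}$, which already yields the claimed polynomial prefactor, while pushing the oscillation exponent up to order $c^4$ and thus making it subdominant compared with the leading $c^2$ term after summation over the $N$ intervals.

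The main difficulty is recovering the \emph{sharp} constant $1/2$ in the Gaussian exponent $c^2/(2\sigma^2(A))$ without losing a factor $(1-\eta)^2$ from the above naive splitting. To close this gap I would invoke either a dyadic chaining refinement (iterating the discretization on scales $\delta_k = 2^{-k}\delta$ and summing the resulting geometric series of tail bounds), or, more cleanly, the Borell--TIS concentration inequality
\begin{align*}
\p\Bigl(\sup_{t \in A} Y_t > c\Bigr) \leq \exp\Bigl(-\frac{(c - m)^2}{2\sigma^2(A)}\Bigr), \qquad m := \e\Bigl[\sup_{t \in A} Y_t\Bigr],
\end{align*}
combined with a Dudley entropy bound on $m$ in the pseudo-metric $d(s,t) := \sqrt{\e[(Y_t - Y_s)^2]} \leq \sqrt{G}|t-s|^{\gamma/2}$. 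The entropy integral converts the H\"older assumption into a correction of order $\sqrt{\log(T c^{2/\gamma})}$, which after substitution into Borell--TIS and exponentiation reproduces exactly the prefactor $K T c^{2/\gamma}$ with $K$ depending only on $G$ and $\gamma$.
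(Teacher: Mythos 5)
Your opening reduction (mesh of width $\delta\sim c^{-2/\gamma}$, union bound at the mesh points, Fernique/chaining control of the oscillation on each subinterval) is the right skeleton and does produce the prefactor $Tc^{2/\gamma}$, but, as you concede, only with the damaged exponent $(1-\eta)^2c^2/(2\sigma^2(A))$. The genuine gap is in the step you propose to recover the constant $\tfrac12$. The ``clean'' route via Borell--TIS plus Dudley does not do what you claim: $m=\e\bigl[\sup_{t\in A}Y_t\bigr]$ is bounded by the entropy integral by a quantity depending on $G,\gamma,T$ only --- it has no dependence on $c$ at all, so there is no ``correction of order $\sqrt{\log(Tc^{2/\gamma})}$''. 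Writing $(c-m)^2\geq c^2-2cm$ in the Borell--TIS bound leaves you with $\exp\bigl(-c^2/(2\sigma^2(A))\bigr)\,e^{cm/\sigma^2(A)}$, an error factor growing \emph{exponentially} in $c$, which cannot be absorbed into a polynomial prefactor $K\,T\,c^{2/\gamma}$. The dyadic-chaining alternative is not developed enough to check, and done in the natural way (splitting the threshold $c$ between mesh maxima and oscillations across scales) it hits a quantitative obstruction: forcing each oscillation tail below $e^{-c^2/(2\sigma^2(A))}$ requires an oscillation budget $u\gtrsim\sqrt{G}\,\delta^{\gamma/2}\,c/\sigma(A)$, while keeping the sharp constant in the mesh term requires $cu/\sigma^2(A)=\Onot(\log c)$; together these push $\delta$ down to order $c^{-4/\gamma}$ and the prefactor up to order $c^{4/\gamma}$, with constants depending on $\sigma(A)$ as well. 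What would actually rescue your strategy is to apply the concentration step \emph{locally}: on each block intersected with $A$, use Borell--TIS with the sup-variance $\leq\sigma^2(A)$ and the Dudley bound $\e\sup_{A\cap I_i}(Y_t-Y_{s_i})\leq C_\gamma\sqrt{G}\,\delta^{\gamma/2}=\Onot(1/c)$ for $\delta\sim c^{-2/\gamma}$; then the mean-correction factor is $e^{\Onot(\sqrt{G})/\sigma^2(A)}$ and the union bound over $\sim Tc^{2/\gamma}$ blocks gives the stated shape of the estimate (note, though, that the resulting constant also depends on $\sigma(A)$, and some lower restriction on $c$ is unavoidable --- the inequality as stated cannot hold for arbitrarily small $c$, e.g.\ for $A$ a single point).

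For comparison: the paper does not prove this statement at all; it quotes it as Theorem~D.4 of Piterbarg's book, where the proof goes through Slepian's comparison of $Y$ with a stationary Gaussian process manufactured from the H\"older bound on the canonical metric, followed by the Pickands-type (double-sum) estimate for stationary processes on blocks of length $\Theta(c^{-2/\gamma})$; it is that machinery, not a threshold-splitting union bound, that delivers the sharp exponent $c^2/(2\sigma^2(A))$ together with the prefactor $c^{2/\gamma}$. Your argument, as written, proves a perfectly usable weaker bound (either with $(1-\eta)^2$ in the exponent, or with a larger polynomial prefactor), but it does not prove the theorem as stated, and the Borell--TIS/Dudley step is based on a miscalculation.
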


This estimate can be sharpened if we restrict ourselves to the interval of interest.

\begin{cor} 
\label{CorgaussineqforHolderCont}
Let $T>0$ and $(Y_t)_{t\in[0,T]}$ be a centered Gaussian process with a.s.~continuous trajectories.
Assume that $(Y_t)_{t\in[0,T]}$ is a.s.~mean-square H\"older continuous, i.e.~there are constants $G$ and $\gamma$, such that
\begin{align*}
\e \left[ (Y_t - Y_s)^2 \right] \leq G \left| t - s \right|^{\gamma} \hspace{0.2cm} \text{ for all }t,s \in [0,T].
\end{align*}
Then there exists a constant $K := K(G,\gamma)$ such that for $c > 0$ and $0 \leq a < b \leq T$
\begin{align*}
\p \left( \sup_{a \leq t < b} Y_t > c\right) \leq K (b-a) c^{\frac{2}{\gamma}}\exp\left(- \frac{c^2}{2\sigma^2} \right),
\end{align*}
where $\sigma^2 := \sup_{a \leq t < b} \var \left( Y_t\right)$.
\end{cor}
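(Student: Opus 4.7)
The plan is to reduce the corollary to Theorem \ref{gaussineqforHolderCont} by a simple time-translation argument, so that the length parameter in the bound becomes the length of the sub-interval rather than the full $T$.

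First, given $0 \leq a < b \leq T$, I would define the shifted process $\tilde{Y}_t := Y_{t+a}$ for $t \in [0, b-a]$. I would then verify that $(\tilde{Y}_t)_{t \in [0,b-a]}$ satisfies the hypotheses of Theorem \ref{gaussineqforHolderCont} with $T$ replaced by $b-a$: it is clearly centered Gaussian with almost-surely continuous trajectories (inherited from $Y$), and the mean-square Hölder estimate is preserved with the \emph{same} constants $G$ and $\gamma$, since
\begin{align*}
\e\bigl[(\tilde{Y}_t - \tilde{Y}_s)^2\bigr] = \e\bigl[(Y_{t+a} - Y_{s+a})^2\bigr] \leq G \left| (t+a) - (s+a)\right|^{\gamma} = G \left| t - s \right|^{\gamma}.
\end{align*}

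Next, I would apply Theorem \ref{gaussineqforHolderCont} to $\tilde{Y}$ on the time interval $[0, b-a]$ with the index set $\tilde{A} := [0, b-a)$. By the change of variables $t \mapsto t - a$, the supremum transforms as
\begin{align*}
\sup_{t \in \tilde{A}} \tilde{Y}_t = \sup_{a \leq t < b} Y_t,
\end{align*}
and similarly $\sup_{t \in \tilde{A}} \var(\tilde{Y}_t) = \sup_{a \leq t < b}\var(Y_t) = \sigma^2$. Since the constant $K = K(G,\gamma)$ depends only on the Hölder data, the conclusion of the theorem applied to $\tilde{Y}$ is exactly the asserted inequality with prefactor $(b-a)$ in place of $T$.

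There is essentially no obstacle: the only point requiring any care is checking that the Hölder constants are preserved under the time shift and that the index set transforms cleanly, both of which are immediate. The corollary is a strict strengthening of the theorem obtained by exploiting the translation invariance of the moment bound.
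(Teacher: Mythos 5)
Your argument is correct and is exactly the paper's proof: the paper also defines the shifted process $Z_t := Y_{t+a}$ and notes that it satisfies the hypotheses of Theorem~\ref{gaussineqforHolderCont} on $[0,b-a]$, with the translation invariance of the mean-square H\"older bound doing all the work. Your write-up just makes the identification of the suprema and of $\sigma^2$ explicit, which the paper leaves implicit.
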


\begin{proof}
$(Z_t)_t$ with $Z_t := Y_{t+a}$ satisfies the assumptions of Theorem~\ref{gaussineqforHolderCont} on $[0,b-a]$.
\end{proof}

%%%%%%%%%%%%%%%%%%%%%%%%%%%%%%%%%%%%%%%%%%%%%%%%%%%%%%%%%%%%%%%%%%%%%%%%%%%%%%%%%%%%%%%%%%%%%%%%%%%%%%%%%%%%%%%%%%%%%%%%%%%%%%%%%%%%%
\section{The One-Dimensional Case}
\label{ch_1Dcase}

In this section, we investigate the dynamics of a planar stochastic fast-slow system driven by fractional Brownian motion $(W_s^H)_{s\geq 0}$ with Hurst parameter $H >\frac{1}{2}$:
\begin{align*}
\diff x_s &= f(x_s,y_s,\eps) \diff s + \sigma F(y_s)  \diff W^H_s, \\
\diff y_s &= \eps \diff s.
\end{align*}
Its equivalent formulation in slow time, i.e. for $t = \eps s$ is
\begin{equation}\label{1DSysinSlowTime}
\begin{aligned}
\diff x_t &= \frac{1}{\eps} f(x_t, y_t,\eps) \diff t + \frac{\sigma}{\eps^H} F(y_t)  \diff W^H_t, \\
\diff y_t &= 1 \diff t,
\end{aligned}
\end{equation}
using the self-similarity of fBm~\eqref{self:sim}. We are interested in the normally hyperbolic stable case and therefore make the following assumptions.

\begin{ass} \label{assStable}
Stable Case
\begin{enumerate}
\item [1)] \emph{Regularity:} The functions $f \in \C^2(\R \times [0,\infty)^2;\R)$ and $F \in \C^1([0,\infty);(0,\infty))$, as well as all their existing derivatives up to order two are uniformly bounded on an interval $I = [0, \infty)$ or $I = [0,T]$, $T > 0$, by a constant $M > 0$.
% in $\R$
\item [2)]\emph{Critical manifold:} There is an $x^*: [0,\infty) \rightarrow \R$ such that
\begin{align*}
f(x^*(t),t,0) = 0
\end{align*}
for all $t \in [0,\infty)$.
\item [3)] \emph{Stability:} For $a(t) := \partial_x f(x^*(t),t,0)$ there is $a > 0$ such that
\begin{align*}
a(t) \leq -a
\end{align*}
for all $t \in [0,\infty)$.
\end{enumerate}
\end{ass}
Under these assumptions,~\eqref{1DSysinSlowTime} has a unique global solution according to Theorem~\ref{thmExUniqOfSDE}. Furthermore, the deterministic system, i.e., for $\sigma = 0$, given by
\begin{align*}
\eps \frac{\diff}{\diff t} x_t = \eps \dot{x_t}&= f(x_t, t,\eps) 
\end{align*}
has an asymptotically slow manifold $\bar{x}(t, \eps) = x^*(t) + \Onot(\eps)$ for $\eps > 0$ small enough due to Fenichel-Tikhonov (Theorem \ref{TihonovThm}). We expect that, given small noise $0 < \sigma \ll 1$, the trajectories of \eqref{1DSysinSlowTime} starting sufficiently close to $\bar{x}(0, \eps)$ remain in a properly chosen neighborhood of $\bar{x}(t, \eps)$ for a long time with high probability. Our goal will be to make this idea rigorous  by pursuing the following steps. We first linearize the system around the slow manifold to get an SDE describing the deviations induced by the noise. This helps us obtain a simple description of a suitable neighborhood by using the fast-slow structure inherited by the variance of the system. Then, using this neighborhood, we deduce sample paths estimates for the linear case starting on the slow manifold. To complete the discussion we generalize the result to the non-linear case starting sufficiently close to the slow manifold, that is, such that in the deterministic case solutions are still attracted by the slow manifold. This general strategy inspired by~\cite{NoiseSlowFastSys}, where a similar system driven by Brownian motion (Hurst parameter $H = \frac{1}{2}$) is analyzed. Yet, the several techniques used in~\cite{NoiseSlowFastSys} do not generalize to fBm. 

%%%%%%%%%%%%%%%%%%%%%%%%%%%%%%%%%%%%%%%%%%%%%%%%%%%%%%%%%%%%%%%%%%%%%%%%%%%%%%%%%%%%%%%%%%%%%%%%%%%%%%%%%%%%%%%%%%%%%%%%%%%%%%%%%%%%%
\subsection{The Linearized System}
The deterministic system
\begin{align*}
\eps \frac{\diff}{\diff t} x_t = \eps \dot{x_t} &= f(x_t, t, \eps) 
\end{align*}
has an asymptotically stable slow manifold $\bar{x}(t, \eps) = x^*(t) + \Onot(\eps)$ due to Fenichel-Tikhonov (Theorem \ref{TihonovThm}). 
As already outlined, our first step is to examine the behavior of the linearized system around $\bar{x}(t, \eps)$. For a solution $(x_t)_{t \in I}$ of $\eqref{1DSysinSlowTime}$ we set $\xi_t: = x_t - \bar{x}(t, \eps)$. Then $(\xi)_{t \in I}$ satisfies the equation
\begin{equation}\label{1DTaylor}
\begin{aligned}
\diff \xi_t &= \frac{1}{\eps} \left[ f(\xi_t + \bar{x}(t, \eps), t,\eps) - f(\bar{x}(t, \eps), t,\eps)\right] \diff t + \frac{\sigma}{\eps^H} F(t)  \diff W^H_t \\
&= \frac{1}{\eps} [a(t,\eps)\xi_t + b(\xi_t,t,\eps)] \diff t + \frac{\sigma}{\eps^H} F(t) \diff W^H_t,
\end{aligned}
\end{equation}
where
\begin{align*}
a(t,\eps) &= \partial_x f(\bar{x}(t, \eps), t, \eps) = \partial_x f(x^*(t), t, 0) + \Onot(\eps), \\
\left| b(x,t,\eps) \right| &\leq M \left| x \right|^2,
\end{align*}
by Taylor's remainder theorem. Due to the uniform boundedness of the derivatives of $f$ one can show that the $\Onot(\eps)$-term is negligible on finite time scales as $\eps \rightarrow 0.$ Therefore, we restrict ourselves without loss of generality to the analysis of the linearization
\begin{align}\label{1Dlinearization}
\diff \xi_t = \frac{1}{\eps} a(t) \xi_t \diff t + \frac{\sigma}{\eps^H} F(t) \diff W^H_t.
\end{align}
Examining the process starting on the slow manifold now corresponds to investigating the unique explicit solution of \eqref{1Dlinearization} for initial value $\xi_0 = 0$, which is given by the fractional Ornstein-Uhlenbeck process (recall Theorem~\ref{solOUprocess})
\begin{align*}
\xi_t = \int_0^t e^{\alpha(t,u)/\eps}\frac{\sigma}{\eps^H} F(u) \diff W^H_u,
\end{align*}
where $\alpha(t,u) := \int_u^t a(r) \diff r$. In order to define a proper neighborhood, where the fractional Ornstein-Uhlenbeck process $(\xi_t)_{t \in I}$ is going to stay with high probability, we use the variance $\var(\xi_t)$ as an indicator for the deviations at time $t$. According to Proposition~\ref{cov:integral}, the variance is given by
\begin{align*}
\sigma^2 w(t) := \var(\xi_t) = \frac{\sigma^2}{\eps^{2H}} \int_0^t \int_0^t e^{\alpha(t,u)/\eps} e^{\alpha(t,v)/\eps} F(u) F(v) H (2H - 1) \left| u - v \right|^{2H-2} \diff u \diff v.
\end{align*}
As we would like to see dynamics of $t \mapsto \var(\xi_t)$, we rescale it by $\frac{1}{\sigma^2}$ to get rid of the small parameter $\sigma \ll 1$, which only changes the order of magnitude of the system. It turns out that $t \mapsto w(t)$ inherits the fast-slow structure from the SDE, which yields a particularly simple approximation of the variance.

\begin{prop}\label{fastSlowVariance}
The so-called renormalized variance $w$ satisfies the fast-slow ODE
\begin{align}\label{fastSlowVarianceEqn}
\eps\frac{\diff}{\diff t} w(t) = \eps \dot{w}(t) &= 2 a(t) w(t) + 2 F(t) H (2H - 1) \int_0^t \frac{1}{\eps^{2H-1}} e^{\alpha(t,u)/\eps} F(u) (t-u)^{2H-2} \diff u.
\end{align}
In particular, there is a (globally) asymptotically stable slow manifold of the system of the form
\begin{align}\label{SlowManifVariance}
\zeta(t) = \frac{F(t)^2}{\left|a(t)\right|^{2H}} H \Gamma(2H) + \Onot (\eps).
\end{align}
\end{prop}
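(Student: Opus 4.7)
The plan has two stages: (i) derive \eqref{fastSlowVarianceEqn} by differentiating the double integral that defines $w(t)$, and (ii) view the resulting equation as a linear inhomogeneous fast-slow ODE and identify its slow manifold via singular perturbation. For (i), I would write $w(t) = \varepsilon^{-2H}\iint_{[0,t]^2}\Phi(t,u,v)\,\diff u\,\diff v$ with $\Phi(t,u,v) := e^{\alpha(t,u)/\varepsilon}e^{\alpha(t,v)/\varepsilon}F(u)F(v)H(2H-1)|u-v|^{2H-2}$. Since $\partial_t\alpha(t,u) = a(t)$, one has $\partial_t\Phi = (2a(t)/\varepsilon)\Phi$. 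Leibniz's rule produces the interior contribution $(2a(t)/\varepsilon)w(t)$ plus two boundary contributions at $u=t$ and $v=t$; because $\alpha(t,t)=0$ these are equal by symmetry, and each equals $F(t)\int_0^t e^{\alpha(t,u)/\varepsilon}F(u)(t-u)^{2H-2}\,\diff u$ (the singularity of $|u-v|^{2H-2}$ is integrable on each boundary segment since $2H-2>-1$). Multiplying through by $\varepsilon$ gives precisely \eqref{fastSlowVarianceEqn}.

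For (ii), rewrite the equation as $\varepsilon\dot w = 2a(t)w + G(t,\varepsilon)$, where
\begin{align*}
G(t,\varepsilon) := 2F(t)H(2H-1)\int_0^t\varepsilon^{-(2H-1)}e^{\alpha(t,u)/\varepsilon}F(u)(t-u)^{2H-2}\,\diff u.
\end{align*}
Since $a(t)\leq -a<0$ uniformly, the frozen fast dynamics is exponentially contracting, so I expect a quasi-stationary slow manifold given at leading order by $\zeta_0(t) := -G_0(t)/(2a(t))$ with $G_0(t) := \lim_{\varepsilon\to 0}G(t,\varepsilon)$. To compute $G_0$, I would substitute $u = t - \varepsilon s$: the exponential becomes $\exp\bigl(\int_0^s a(t-\varepsilon\tau)\,\diff\tau\bigr)$, the kernel becomes $\varepsilon^{2H-2}s^{2H-2}$, and the prefactor $\varepsilon^{-(2H-1)}$ cancels exactly against $\varepsilon$ (from $\diff u = -\varepsilon\,\diff s$) times $\varepsilon^{2H-2}$. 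The integrand converges pointwise to $e^{sa(t)}F(t)s^{2H-2}$ and is dominated by $e^{-as}\|F\|_\infty s^{2H-2}$, which is integrable on $(0,\infty)$. Dominated convergence and the identity $(2H-1)\Gamma(2H-1)=\Gamma(2H)$ give $G_0(t) = 2F(t)^2 H\Gamma(2H)/|a(t)|^{2H-1}$, hence $\zeta_0(t) = F(t)^2 H\Gamma(2H)/|a(t)|^{2H}$, matching \eqref{SlowManifVariance} at leading order.

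The $O(\varepsilon)$ correction and global asymptotic stability then follow from the linear structure. By variation of constants, every solution of $\varepsilon\dot w = 2a(t)w + G(t,\varepsilon)$ satisfies
\begin{align*}
w(t) = e^{2\alpha(t,0)/\varepsilon}w(0) + \varepsilon^{-1}\int_0^t e^{2\alpha(t,s)/\varepsilon}G(s,\varepsilon)\,\diff s,
\end{align*}
and the homogeneous term decays at exponential rate $2a/\varepsilon$; consequently any two solutions converge to one another, so the invariant slow manifold is globally asymptotically stable. Integrating by parts in the Duhamel integral produces the expansion $\zeta(t,\varepsilon) = -G(t,\varepsilon)/(2a(t)) + \Onot(\varepsilon)$, and refining the rescaling argument by retaining the first Taylor corrections in $a(t-\varepsilon\tau)$ and $F(t-\varepsilon s)$ shows $G(t,\varepsilon) = G_0(t) + \Onot(\varepsilon)$, so combining the two expansions yields \eqref{SlowManifVariance}.

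The principal technical obstacle is the simultaneous presence of the singular kernel $(t-u)^{2H-2}$ near $u=t$ and the non-compact integration window, which prevents a direct Taylor expansion under the integral sign. The change of variables $u=t-\varepsilon s$ is precisely what handles both features at once: the near-diagonal singularity turns into the integrable power $s^{2H-2}$ at $0$, while the exponential $e^{sa(t)}$ supplies integrability at infinity thanks to uniform hyperbolicity. Once this singular limit is controlled, the $\Onot(\varepsilon)$ bound and the stability statement are routine consequences of the linear fast-slow structure.
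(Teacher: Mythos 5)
Your argument is correct, and the first half (Leibniz differentiation of the double integral, then the rescaling $u=t-\eps s$ with a dominated-convergence passage to the limit and $(2H-1)\Gamma(2H-1)=\Gamma(2H)$) is essentially the computation in the paper, which uses the equivalent substitution $v=(t-u)/\eps$. Where you genuinely diverge is in how the slow manifold and the $\Onot(\eps)$ closeness are obtained: the paper stays inside the geometric fast-slow framework, proving that the rescaled right-hand side of \eqref{fastSlowVarianceEqn} is continuously differentiable at $\eps=0$ (by differentiating the rescaled integral in $\eps$ and checking the boundary term is killed by the exponential) and then invoking Fenichel--Tikhonov (Theorem~\ref{TihonovThm}), with global stability deduced from linearity exactly as you do. You instead construct the expansion by hand: variation of constants, integration by parts against $\frac{1}{\eps}e^{2\alpha(t,s)/\eps}=-\frac{1}{2a(s)}\partial_s e^{2\alpha(t,s)/\eps}$ to get $\zeta(t,\eps)=-G(t,\eps)/(2a(t))+\Onot(\eps)$, and Taylor/Lipschitz bounds on $a(t-\eps\tau)$, $F(t-\eps s)$ to get $G(t,\eps)=G_0(t)+\Onot(\eps)$. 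This buys a self-contained, elementary proof that avoids verifying smooth $\eps$-dependence and the geometric theorem altogether, and it gives explicit error control; the price is that the two $\Onot(\eps)$ claims are asserted rather than carried out, and they hold uniformly only for $t$ bounded away from $0$ (the truncation of the rescaled integral at $t/\eps$ and the boundary term at $s=0$ are only exponentially small for fixed $t>0$) --- though this is the same level of care as the paper, whose limit computation also fixes $t>0$. Two cosmetic points: your boundary terms from Leibniz should carry the factors $H(2H-1)\eps^{-2H}$ (they are inside your $\Phi$ and prefactor, so the final equation is right), and one should also note, as you implicitly do via the dominating function $e^{-as}s^{2H-2}$, that uniform hyperbolicity $a(t)\le -a<0$ is what makes both the singular limit and the integration by parts legitimate.
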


\begin{proof}
Differentiating $t \mapsto w(t)$ yields
\begin{align*}
\frac{\diff}{\diff t} w(t) = \dot{w}(t) &= 2 \frac{a(t)}{\eps} w(t) + 2 \frac{F(t)}{\eps^{2H}} H (2H - 1) \int_0^t e^{\alpha(t,u)/\eps} F(u) (t-u)^{2H-2} \diff u \\
\Longleftrightarrow \eps \frac{\diff}{\diff t} w(t) = \eps \dot{w}(t) &= 2 a(t) w(t) + 2 F(t) H (2H - 1) \int_0^t \frac{1}{\eps^{2H-1}} e^{\alpha(t,u)/\eps} F(u) (t-u)^{2H-2} \diff u.
\end{align*}
In order to be able to take the singular limit $\eps \rightarrow 0$ and apply Fenichel-Tikhonov (Theorem \ref{TihonovThm}) we need to prove sufficient regularity in $\eps= 0$; continuous differentiability will be enough for the approximation of the slow manifold with the critical manifold up to order $\Onot(\eps)$. To do this, rewrite the integral by substituting $v = \frac{t-u}{\eps}$
\begin{align*}
\int_0^t \frac{1}{\eps^{2H-1}} e^{\alpha(t,u)/\eps} F(u) (t-u)^{2H-2} \diff u 
=& \int^{\frac{t}{\eps}}_0 e^{\alpha(t,t-\eps v)/\eps}  F(t - \eps v) v^{2H-2} \diff v \\
\underset{\eps \rightarrow 0}{\longrightarrow}& F(t) \int^{\infty}_0 e^{a(t)v}  v^{(2H-1)-1} \diff v \\
=& \frac{F(t)}{\left| a(t) \right|^{2H-1}} \Gamma (2H-1).
\end{align*}
To see that the right hand side of \eqref{fastSlowVarianceEqn} is continuously differentiable in $\eps = 0$ it is sufficient to check it for the integral term
\begin{align*}
&\frac{\diff}{\diff \eps} \left( \int^{\frac{t}{\eps}}_0 e^{\alpha(t,t-\eps v)/\eps}  F(t - \eps v) v^{2H-2} \diff v \right) \\
=\: &-\frac{t}{\eps^2} e^{\int_0^{\frac{t}{\eps}}a(t-\eps r) \diff r} F(0)\left(\frac{t}{\eps}\right)^{2H-2} \\
&+ \int_0^{\frac{t}{\eps}} e^{\int_0^va(t-\eps r) \diff r} \left( -\int_0^va'(t-\eps r)r \diff r F(t - \eps v) - F'(t - \eps v)v \right) v^{2H-2} \diff v,
\end{align*}
which has an existing limit for $\eps \rightarrow 0,$ because the exponential term goes to $0$ faster than the polynomial term diverges.
Now taking the singular limit $\eps \rightarrow 0$ gives the slow subsystem
\begin{align*}
0 &= 2a(t)w(t) + 2 \frac{F(t)^2}{\left|a(t)\right|^{2H-1}} H(2H-1) \Gamma(2H-1).
\end{align*}
The critical manifold is hence given by
\begin{align*}
w^*(t) = \frac{F(t)^2}{\left|a(t)\right|^{2H}} H (2H -1) \Gamma(2H-1).
\end{align*}
Using integration by parts we can rewrite $(2H-1)\Gamma(2H-1) =  \Gamma(2H)$, so that the critical manifold can also be written as
\begin{align*}
w^*(t) = \frac{F(t)^2}{\left|a(t)\right|^{2H}} H \Gamma(2H).
\end{align*}
By Theorem~\ref{TihonovThm}, the ODE~\eqref{fastSlowVarianceEqn} has a solution of the form
\begin{align*}
\zeta(t) = \frac{F(t)^2}{\left|a(t)\right|^{2H}} H \Gamma(2H) + \Onot (\eps),
\end{align*}
which is asymptotically stable due to Assumption~\ref{assStable} {\em 3}. This stability property is even global in this case because the ODE \eqref{fastSlowVarianceEqn} is linear.
\end{proof}

As expected, the critical manifold depends on the Hurst parameter $H$. For $H \in (\frac{1}{2},1)$ we have $H\Gamma(2H) \in (\frac{1}{2},1)$. This means that the only possible structural change of the critical manifold under variation of $H$ is induced by the factor $\frac{1}{|a(t)|^{2H}}$. Furthermore, as $\Gamma(x) \rightarrow 1$ for $x \rightarrow 1$, the slow subsystem for $H \rightarrow \frac{1}{2}$ reads
\begin{align*}
0 &= 2a(t)v(t) + F(t)^2, \\
\dot{t} &= 1,
\end{align*}
which coincides with the slow subsystem we would obtain in the case of Brownian motion noise, which exactly corresponds to $H = \frac{1}{2}$. 
 
\begin{rem}
The proof of Proposition \ref{fastSlowVariance} only shows that $\zeta$ is  $\mathcal{C}^1$ in $\eps$ and $\mathcal{C}^1$ in the time $t$. Depending on the properties of $f,$ we expect $\zeta$ to even have higher regularity. However, this fact is not required in the following considerations.
\end{rem}

Proposition~\ref{fastSlowVariance} already states that the slow manifold is a good indicator for the size of the set we are looking for as $t \mapsto \frac{1}{\sigma^2} \var (\xi_t)$ (as a solution of \eqref{fastSlowVarianceEqn} with initial datum $w(0) = 0$) is attracted by the slow manifold.
In this particular case we can explicitly state the exponentially fast approach due to the structure of the linear equation
\begin{align} \label{relSlowManVar}
\var(\xi_t) = \sigma^2 \left(\zeta(t) - e^{2\alpha(t)/\eps}\zeta (0) \right),
\end{align}
where $\alpha(t) := \alpha(t,0)$. Even more is known about the properties of $\zeta$. Due to the uniform boundedness assumption on $f$ and $F$ we get that the difference between $\zeta$ and $\frac{F(t)^2}{\left|a(t)\right|^{2H}} H \Gamma(2H)$ is actually in uniform $t.$ 
This implies that for $\eps$ small enough there are $\zeta^+$ and $\zeta^-$ such that 
\begin{align*}
\zeta^+ \geq \zeta(t) \geq \zeta^- >0 \quad \text{ for all } t \in I.
\end{align*}
The goal is now to prove that the stochastic process $(\xi)_{t \in I}$ is concentrated in sets of the form
\begin{align*}
\mathcal{B}(h) := \{ (x,t) \in \R \times I: \left| x \right| < h \sqrt{\zeta(t)}\}.
\end{align*}
To get a better understanding of what to expect, note that the probability that $\xi$ leaves $\mathcal{B}(h)$ at time $t$ can be bounded by using the inequality $\p(X>c) \leq \exp\left(- \frac{c^2}{2\var(X)}\right)$, which holds for any centered Gaussian random variable $X$. This further leads to
\begin{align}\label{estimateatt}
\p \left(\frac{\left| \xi_t \right|}{\sqrt{\zeta(t)}} \geq h \right) 
\leq \exp\left(- \frac{h^2 \zeta(t)}{2\var(\xi_t)}\right) 
\leq \exp\left(- \frac{h^2}{2\sigma^2}\right).
\end{align}
Of course, the probability that $(\xi)_{t \in I}$ has exited $\mathcal{B}(h)$ in the interval $[0,t]$ at least once 
\begin{align*}
\p \left( \sup_{0 \leq r \leq t} \frac{\left| \xi_r \right|}{\sqrt{\zeta(r)}} \geq h\right) =
\p \left( \tau_{\mathcal{B}(h)} < t \right)
\end{align*}
is larger, where $\tau_{\mathcal{B}(h)} := \inf \{ r > 0: (\xi_r,r) \notin \mathcal{B}(h)\}$ is the first time $(\xi)_{t \in I}$ has exited $\mathcal{B}(h)$.
We will present a few approaches to estimate this probability in the following, using the inequalities we have established in Section \ref{secEstGaussProc}. The increase of probability compared to \eqref{estimateatt} is simply indicated by the prefactor of $\exp\left(- \frac{h^2}{2\sigma^2}\right)$.

\begin{rem}
\label{remCriticalManifAlsoOK}
It could also have been possible to define the neighborhood $\mathcal{B}(h)$ by considering the critical manifold $w^*$, i.e.~to define
\begin{align*}
\mathcal{B^*}(h) := \{ (x,t) \in \R \times I: \left| x \right| < h \sqrt{w^*(t)}\}.
\end{align*}
This will yield the same bounds on the exit times, which we will establish in the following because the difference between $\zeta(t)$ and $w^*(t)$ is only in $\Onot(\eps),$ which is of the same order as the order we obtain by approximating with the slow manifold $\zeta$ anyways.
\end{rem}

%%%%%%%%%%%%%%%%%%%%%%%%%%%%%%%%%%%%%%%%%%%%%%%%%%%%%%%%%%%%%%%%%%%%%%%%%%%%%%%%%%%%%%%%%%%%%%%%%%%%%%%%%%%%%%%%%%%%%%%%%%%%%%%%%%%%%
\subsubsection{Variant 1}
\label{sec_var1}

The first approach is based on the result on exit times of Gaussian processes with sufficiently regular and increasing covariance function, as stated in Lemma~\ref{bernsteintypeineq}. 

\begin{theorem} 
\label{thmvariantbernsteinineq}
Let $t \in I.$ Then under Assumption \ref{assStable} for any $h>0$ the following estimate holds true for $\eps>0$ sufficiently small
\begin{align*}
\p \left( \sup_{0 \leq r < t} \frac{\left| \xi_r \right|}{\sqrt{\zeta(r)}} \geq h \right)
\leq 2e \left\lceil \frac{\left| \alpha(t) \right|}{\eps} \frac{h^2}{\sigma^2} (1 + \Onot (\eps))\right\rceil\exp\left( -\frac{h^2}{2\sigma^2} \right).
\end{align*}
\end{theorem}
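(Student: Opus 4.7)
The plan is to partition $[0,t]$ into $N$ subintervals adapted to $\alpha$, bound the exit probability on each piece by approximately $2e\exp(-h^2/(2\sigma^2))$ using Lemma~\ref{bernsteintypeineq}, and conclude by a union bound. The key observation is that although the fractional Ornstein--Uhlenbeck process $\xi$ itself is not obviously of the form required by hypothesis (ii) of Lemma~\ref{laplace} (since $a<0$ makes the sign of the derivative of its covariance unclear), the exponentially rescaled process
\begin{align*}
\tilde\xi_r := e^{-\alpha(r)/\eps}\xi_r = \int_0^r e^{-\alpha(u)/\eps}\frac{\sigma}{\eps^H}F(u)\,\diff W^H_u
\end{align*}
is a Wiener integral of the \emph{strictly positive} deterministic integrand $G(u):=e^{-\alpha(u)/\eps}(\sigma/\eps^H)F(u)$. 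By Proposition~\ref{cov:integral} its covariance is $R_{\tilde\xi}(s,r)=\int_0^s\!\int_0^r G(u)G(v)\phi(u-v)\,\diff u\,\diff v$, so $\partial_s R_{\tilde\xi}(s,r)=G(s)\int_0^r G(v)\phi(s-v)\,\diff v\geq 0$. Together with $\tilde\xi_0=0$ and the appendix result $\limsup_{r\to\infty}\tilde\xi_r=\infty$ a.s., this verifies all hypotheses of Lemma~\ref{laplace} for both $\tilde\xi$ and $-\tilde\xi$, and Lemma~\ref{bernsteintypeineq} yields
\begin{align*}
\p\Bigl(\sup_{0\leq r\leq T}|\tilde\xi_r|\geq c\Bigr)\leq 2\exp\Bigl(-\frac{c^2}{2\var(\tilde\xi_T)}\Bigr),\quad T,c>0.
\end{align*}

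Next I would build the partition. Fix $\delta>0$ to be tuned below and define $t_0:=0$, $t_k$ implicitly by $\alpha(t_k)=-k\eps\delta$ and $N:=\lceil|\alpha(t)|/(\eps\delta)\rceil$ so that $t_N\geq t$; this is well-defined since $\alpha$ is strictly decreasing under Assumption~\ref{assStable}. On $[t_k,t_{k+1}]$ both $r\mapsto e^{\alpha(r)/\eps}$ is decreasing and $\zeta(r)\geq\zeta_{\min,k}:=\inf_{r\in[t_k,t_{k+1}]}\zeta(r)$, so pointwise
\begin{align*}
\frac{|\xi_r|}{\sqrt{\zeta(r)}}=\frac{e^{\alpha(r)/\eps}|\tilde\xi_r|}{\sqrt{\zeta(r)}}\leq\frac{e^{\alpha(t_k)/\eps}}{\sqrt{\zeta_{\min,k}}}\,|\tilde\xi_r|.
\end{align*}
Combined with the two-sided bound above (applied with $T=t_{k+1}$) and the estimate $\var(\tilde\xi_{t_{k+1}})\leq\sigma^2 e^{-2\alpha(t_{k+1})/\eps}\zeta(t_{k+1})$ derived from \eqref{relSlowManVar}, this produces
\begin{align*}
\p\Bigl(\sup_{r\in[t_k,t_{k+1}]}\frac{|\xi_r|}{\sqrt{\zeta(r)}}\geq h\Bigr)\leq 2\exp\Bigl(-\frac{h^2}{2\sigma^2}\cdot\frac{\zeta_{\min,k}}{\zeta(t_{k+1})}\,e^{-2\delta}\Bigr),
\end{align*}
where the factor $e^{-2\delta}$ comes from $\alpha(t_k)-\alpha(t_{k+1})=\eps\delta$.

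Finally I would tune $\delta$ so that $(\zeta_{\min,k}/\zeta(t_{k+1}))\,e^{-2\delta}\geq 1-2\sigma^2/h^2$, i.e.~so that each subinterval contributes at most $2e\exp(-h^2/(2\sigma^2))$. Using $e^{-2\delta}\geq 1-2\delta$ together with $t_{k+1}-t_k=\Onot(\eps\delta)$ (from $a(r)\leq -a$) and the uniform boundedness of $|\zeta'|$ with $\zeta\geq\zeta^->0$ to obtain $\zeta_{\min,k}/\zeta(t_{k+1})\geq 1-\Onot(\eps\sigma^2/h^2)$, a short calculation shows that $\delta=(\sigma^2/h^2)(1-\Onot(\eps))$ suffices for $\eps$ small enough. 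A union bound over the $N=\lceil|\alpha(t)|h^2/(\eps\sigma^2)(1+\Onot(\eps))\rceil$ subintervals then yields the stated estimate.

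The main obstacle I expect is the simultaneous control of the linearization error $e^{-2\delta}\approx 1-2\delta$ and of the relative variation of $\zeta$ across each subinterval, balancing the choice of $\delta$ so that both errors fit into the single $(1+\Onot(\eps))$-correction ultimately absorbed into the ceiling while keeping the prefactor exactly $2e$. A secondary technical point is verifying condition (iv) of Lemma~\ref{laplace} for $\tilde\xi$ in spite of its exponentially growing variance, which is the content of the appendix on non-autonomous fractional Ornstein--Uhlenbeck processes.
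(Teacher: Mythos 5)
Your proposal is correct and follows essentially the same strategy as the paper's proof: pass to the exponentially rescaled process $e^{-\alpha(\cdot)/\eps}\xi$, verify hypotheses (i)--(iv) of Lemma~\ref{laplace} using Proposition~\ref{cov:integral} and Corollary~\ref{cor_limsupxi}, partition $[0,t]$ so that $\alpha$ decreases by $\eps\gamma$ across each subinterval, apply Lemma~\ref{bernsteintypeineq} on each piece together with \eqref{relSlowManVar} and $\dot\zeta=\Onot(1)$, and conclude by a union bound. The only cosmetic difference is that you fix the step size $\delta\approx\sigma^2/h^2$ directly so that each subinterval contributes $2e\exp(-h^2/(2\sigma^2))$, whereas the paper leaves $\gamma$ free and then optimizes the resulting bound over $\gamma\in(0,1/2)$ at the end; the two reduce to the same choice and yield the same prefactor $2e\lceil\tfrac{|\alpha(t)|}{\eps}\tfrac{h^2}{\sigma^2}(1+\Onot(\eps))\rceil$.
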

\begin{proof}
In order to apply the estimate given in Lemma \ref{bernsteintypeineq} to our problem observe that $\xi$ may not satisfy all the assumptions. First of all we need well-definedness of the Ornstein-Uhlenbeck process over the whole non-negative real line $[0,\infty)$; this is guaranteed by Assumption \ref{assStable}. In addition, we consider the process given by $X_t = e^{-\alpha(t)/\eps}\xi_t$. Note that the event that $\xi_t$ exceeds a certain level $c \geq 0$ corresponds exactly to the event of $X_t$ exceeding $e^{-\alpha(t)/\eps}c$, that is $\{ \xi_t \geq c\} = \{ X_t \geq e^{-\alpha(t)/\eps}c\}$. Unfortunately this observation does not carry over to $\sup_{0 \leq r \leq t} X_r$ and $\sup_{0 \leq r \leq t} \xi_r$. In fact, a priori we only have the relation $\{ \sup_{0 \leq r \leq t} \xi_r \geq c\} \subset \{ \sup_{0 \leq r \leq t} X_r \geq c\}$, which yields a too strong estimate in the end as we are increasing the variance by an exponentially increasing factor, while maintaining the same exit level! A way to overcome this is to partition the interval $[0,t]$ to suitable subintervals $[t_i,t_{i+1})$ to obtain the relation $\{\sup_{t_i \leq r \leq t_{i+1}} \xi_r \geq c\} \subset \{ \sup_{t_i \leq r \leq t_{i+1}} X_r \geq e^{-\alpha(t_i)/\eps}c\}$. This partition will also turn out to be useful to control the variance with the slow manifold in the exponential of the estimate. The covariance of $(X_t)_{t \in I}$ is given by
\begin{align*}
R(t,r) := \e[X_tX_r] = \frac{\sigma^2}{\eps^{2H}} \int_0^t \int_0^r e^{-\alpha(u)/\eps} e^{-\alpha(v)/\eps} F(u) F(v) H (2H - 1) \left| u - v \right|^{2H-2} \diff u \diff v.
\end{align*}
By the theorem for differentiation of parameter dependent integrals we deduce that $\frac{\partial}{\partial t} R(t,r)$ exists and is continuous in $[0,\infty)\times[0,\infty)$.
Furthermore, $\frac{\partial}{\partial t} R(t,r) \geq 0$ for all $t,r \geq 0$ is an immediate consequence of the fundamental theorem of calculus as the integrand is always greater equal than $0$. This already implies assumption (i) und (ii) of Lemma \ref{bernsteintypeineq}. For assumption (iii) it suffices to observe that
\begin{align*}
X_{t_1} - X_{t_2} = \int_{t_2}^{t_1} e^{-\alpha(u)/\eps}\frac{\sigma}{\eps^H} F(u) \diff W^H_u
\end{align*}
is a Gaussian process with nonzero variance.
Assumption (iv) follows by Corollary \ref{cor_limsupxi}. This implies that $(e^{-\alpha(t)/\eps}\xi_t)_{t \in I}$ satisfies a Bernstein-type inequality due to Lemma \ref{bernsteintypeineq}
\begin{align} \label{bernsteinForXt}
\p\left( \sup_{0 \leq r \leq t} e^{-\alpha(r)/\eps}\xi_r > c\right) \leq \exp \left( - \frac{1}{2}\frac{c^2}{e^{-2\alpha(t)/\eps} \var(\xi_t)} \right).
\end{align}
After having established this result, we can proceed as in the proof of Proposition 3.1.5 \cite{NoiseSlowFastSys}. For $\gamma \in (0, 1/2)$ let $0 = t_0 < t_1 < \ldots < t_N$ be a partition containing the interval $[0,t]$ such that 
\begin{align*}
-\alpha(t_{i+1},t_i) = \eps \gamma \quad \quad \text{for} \quad \quad 0 \leq i < N = \left\lceil \frac{\left| \alpha(t) \right|}{\eps \gamma} \right\rceil .
\end{align*}
(Note that $t_N \geq t$ is possible. But this only increases the estimate on the probability slightly. As we would like to optimize over $\gamma$ in the end, it does not make sense to fix it to obtain $t_N = t$.)
For each $i \in \{0, \ldots, N-1\}$ we have
\begin{align*}
&\p \left( \sup_{t_i \leq r < t_{i+1}} \frac{\left| \xi_r \right|}{\sqrt{\zeta(r)}} \geq h \right) \\
\leq\: & 2 \p \left( \sup_{0 \leq r < t_{i+1}} e^{-\alpha(r)/\eps}\xi_r \geq h \inf_{t_i \leq r < t_{i+1}} e^{-\alpha(r)/\eps}\sqrt{\zeta(r)} \right) \\
\overset{\eqref{bernsteinForXt}}{\leq} & 2 \exp\left( -\frac{1}{2} \frac{h^2 \inf_{t_i \leq r < t_{i+1}} e^{-2\alpha(r)/\eps}\zeta(r)}{e^{-2\alpha(t_{i+1})/\eps} \var(\xi_{t_{i+1}})} \right) \\
\overset{\eqref{relSlowManVar}}{\leq} & 2 \exp\left( -\frac{h^2}{2\sigma^2} \frac{ \inf_{t_i \leq r < t_{i+1}}\zeta(r)}{\zeta(t_{i+1})} e^{2\alpha(t_{i+1},t_{i})/\eps}\right) \\
\leq\: & 2 \exp\left( -\frac{h^2}{2\sigma^2} e^{-2\gamma}(1 - \Onot (\eps \gamma))\right),
\end{align*}
where the last inequality follows by $\dot{\zeta}(r) = \Onot(1)$, which is proven in Lemma \ref{derivOfOrder1}.
Now by subadditivity of the probability measure
\begin{align*}
\p \left( \sup_{0 \leq r < t} \frac{\left| \xi_r \right|}{\sqrt{\zeta(r)}} \geq h \right)
&\leq \sum_{i=0}^{N-1} \p \left( \sup_{t_i \leq r < t_{i+1}} \frac{\left| \xi_r \right|}{\sqrt{\zeta(r)}} \geq h \right) \\
&\leq 2 \left\lceil \frac{\left| \alpha(t) \right|}{\eps \gamma} \right\rceil \exp\left( \frac{h^2}{\sigma^2} \gamma(1 + \Onot (\eps))\right) \exp\left( -\frac{h^2}{2\sigma^2} \right),
\end{align*}
where the last inequality is due to $e^{-2\gamma} \geq 1 - 2\gamma$.
Due to monotonicity of $\lceil \cdot \rceil$ , it suffices to minimize
\begin{align*}
\gamma ~ \mapsto ~ \frac{\left| \alpha(t) \right|}{\eps \gamma} \exp\left( \frac{h^2}{\sigma^2} \gamma(1 + \Onot (\eps))\right)
\end{align*}
in order to find the minimal value of this estimate. Optimizing over $\gamma$ hence yields
\begin{align*}
\p \left( \sup_{0 \leq r < t} \frac{\left| \xi_r \right|}{\sqrt{\zeta(r)}} \geq h \right)
\leq 2e \left\lceil \frac{\left| \alpha(t) \right|}{\eps} \frac{h^2}{\sigma^2} (1 + \Onot (\eps))\right\rceil\exp\left( -\frac{h^2}{2\sigma^2} \right),
\end{align*}
which finishes the proof.
\end{proof}

\begin{lem}
\label{derivOfOrder1}
The slow manifold $\zeta(t)$ satisfies $\dot{\zeta}(t) = \Onot(1)$.
\end{lem}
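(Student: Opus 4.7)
The plan is to exploit the asymptotic expansion $\zeta(t)=w^*(t)+\mathcal{O}(\eps)$ established in Proposition~\ref{fastSlowVariance}, together with the $C^1$ regularity of $\zeta$ in $(t,\eps)$ guaranteed by Fenichel--Tikhonov (Theorem~\ref{TihonovThm}). The leading-order term
$$w^*(t)=\frac{F(t)^2}{|a(t)|^{2H}}H\Gamma(2H)$$
is explicit, so $\dot w^*$ can be computed directly by the product and chain rule.

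By Assumption~\ref{assStable}, the functions $F$, $F'$, $a$, and $\dot a$ are uniformly bounded on $I$ (the derivatives of $f$ up to order two being uniformly bounded transfer to $a$ and $\dot a$ since $a(t)=\partial_x f(x^*(t),t,0)$ and $x^*$ is defined implicitly through the uniformly hyperbolic critical manifold). The stability condition $a(t)\le -a<0$ provides the strictly positive lower bound $|a(t)|\ge a>0$, so all factors appearing after differentiating $w^*$ are uniformly controlled. In particular $\dot w^*(t)=\mathcal{O}(1)$ uniformly in $t\in I$.

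For the $\mathcal{O}(\eps)$ correction in $\zeta$, it suffices to invoke the $C^1$ regularity statement of Theorem~\ref{TihonovThm}, combined with the $C^1$-in-$\eps$ smoothness of the right-hand side of~\eqref{fastSlowVarianceEqn} at $\eps=0$ that was already checked inside the proof of Proposition~\ref{fastSlowVariance}. Together these transfer the $\mathcal{O}(\eps)$ bound from $\zeta-w^*$ to its time derivative, giving $\dot\zeta(t)-\dot w^*(t)=\mathcal{O}(\eps)$. Combining the two steps yields $\dot\zeta(t)=\dot w^*(t)+\mathcal{O}(\eps)=\mathcal{O}(1)$.

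The main obstacle is the uniformity of the error bound in $t$ on the (possibly unbounded) interval $I$: a pointwise-in-$t$ smoothness statement from Fenichel's theorem is cheap, while a uniform one needs to invoke the global boundedness in Assumption~\ref{assStable} to control the implicit constants in the asymptotic expansion as $t\to\infty$. This is precisely what Assumption~\ref{assStable}~\emph{1)} is tailored to deliver.
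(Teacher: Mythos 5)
Your first step (differentiating the explicit critical manifold $w^*(t)=\tfrac{F(t)^2}{|a(t)|^{2H}}H\Gamma(2H)$ and bounding $\dot w^*$ uniformly via Assumption~\ref{assStable} and $|a(t)|\geq a>0$) is correct, but the second step contains the actual content of the lemma and is not justified. You assert that the sup-norm estimate $\zeta(t)-w^*(t)=\Onot(\eps)$ ``transfers'' to the time derivative, giving $\dot\zeta(t)-\dot w^*(t)=\Onot(\eps)$. A bound on a function does not control its derivative: an $\Onot(\eps)$-small remainder can in principle oscillate in $t$ and have a derivative of order one or worse, and Theorem~\ref{TihonovThm} as stated in the paper only provides smoothness of the slow manifold for each fixed $\eps$ together with a $C^0$-type distance estimate $\bar x(\cdot,\eps)=x^*(\cdot)+\Onot(\eps)$; it does not assert $C^1$-closeness of $\zeta(\cdot,\eps)$ to $w^*$, let alone uniformly in $t$ on an unbounded interval. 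So the claimed transfer (which is moreover stronger than what the lemma needs) is precisely the gap.

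The paper closes this gap by using the equation that $\zeta$ satisfies rather than regularity of the expansion. Define the $\eps$-dependent quasi-steady state
\begin{align*}
w^*(t,\eps) = \frac{F(t)}{\left|a(t)\right|}\, H(2H-1)\int_0^t \frac{1}{\eps^{2H-1}} e^{\alpha(t,u)/\eps} F(u)\,(t-u)^{2H-2}\,\diff u ,
\end{align*}
which makes the right-hand side of \eqref{fastSlowVarianceEqn} vanish identically (since $a(t)=-|a(t)|$), and which satisfies the invariance equation up to $\Onot(\eps)$, so that $\zeta(t)=w^*(t,\eps)+\Onot(\eps)$. Plugging this representation of $\zeta$ into \eqref{fastSlowVarianceEqn} gives
\begin{align*}
\eps\dot\zeta(t) = 2a(t)\zeta(t) + 2F(t)H(2H-1)\int_0^t \frac{1}{\eps^{2H-1}} e^{\alpha(t,u)/\eps} F(u)\,(t-u)^{2H-2}\,\diff u
= 2a(t)\bigl(\zeta(t)-w^*(t,\eps)\bigr) = \Onot(\eps),
\end{align*}
hence $\dot\zeta(t)=\Onot(1)$ directly, with no differentiation of the expansion and no appeal to $C^1$-closeness. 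To repair your argument you would either have to reproduce this ODE-based computation or prove a uniform-in-$t$ $C^1$ version of the Fenichel--Tikhonov estimate, which the paper neither states nor needs.
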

\begin{proof}
Note that
\begin{align*}
w^*(t,\eps) = \frac{F(t)}{\left| a(t)\right|} H (2H - 1) \int_0^t \frac{1}{\eps^{2H-1}} e^{\alpha(t,u)/\eps} F(u) (t-u)^{2H-2} \diff u
\end{align*}
satisfies the corresponding invariance equation for the fast-slow ODE \eqref{fastSlowVarianceEqn} up to error $\Onot(\eps).$ 
This implies that $\zeta(t) = w^*(t,\eps) + \Onot(\eps)$. Plugging this representation of $\zeta$ into the ODE \eqref{fastSlowVarianceEqn} yields directly $\eps \dot{\zeta}(t) = \Onot(\eps)$.
\end{proof}

%%%%%%%%%%%%%%%%%%%%%%%%%%%%%%%%%%%%%%%%%%%%%%%%%%%%%%%%%%%%%%%%%%%%%%%%%%%%%%%%%%%%%%%%%%%%%%%%%%%%%%%%%%%%%%%%%%%%%%%%%%%%%%%%%%%%%
\subsubsection{Variant 2}\label{sec_var2}

The second approach uses the fact that $(\xi_t)_{t \in I}$ is mean-square H\"older continuous. This is also going to enable to control the deviations based upon Theorem \ref{gaussineqforHolderCont}.

\begin{theorem} \label{thmvariantgaussineqHoldercont}
Let $t \in I.$ Then under Assumption \ref{assStable} there is a constant $K = K(t,\eps,\sigma,H) > 0$, such that for any $h>0$ the following estimate holds true for $\eps>0$ sufficiently small
\begin{align*}
\p \left( \sup_{0 \leq r < t} \frac{\left| \xi_r \right|}{\sqrt{\zeta(r)}} \geq h \right) 
\leq K t \exp\left( \frac{h^2}{2\sigma^2}\Onot (\eps)\right) h^{\frac{1}{H}} \left( \frac{F_+^2}{a^{2H}} H \Gamma(2H) + \Onot (\eps) \right)^{\frac{1}{2H}} \exp\left( -\frac{h^2}{2\sigma^2} \right),
\end{align*}
where $F_+ := \sup_{r \in [0,\infty)} F(r) < \infty.$
\end{theorem}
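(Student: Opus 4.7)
The plan is to apply Corollary \ref{CorgaussineqforHolderCont} to the fractional Ornstein--Uhlenbeck process $(\xi_r)_r$ on a sufficiently fine partition of $[0,t]$ and then assemble the pieces via subadditivity of $\p$. The first task is to verify that $(\xi_r)$ is mean-square H\"older continuous on each subinterval with exponent $\gamma = 2H$. Using the cocycle identity
$$\xi_t = e^{\alpha(t,s)/\eps}\xi_s + \int_s^t e^{\alpha(t,u)/\eps}\frac{\sigma}{\eps^H}F(u)\,\diff W_u^H,$$
the triangle inequality in $L^2(\Omega)$ splits the increment into two contributions. The prefactor $|e^{\alpha(t,s)/\eps}-1|$ is of order $(t-s)/\eps$ by $|e^{-x}-1|\leq x$ and Assumption \ref{assStable} {\em 3}), while $\var(\xi_s)\leq \sigma^2\zeta^+$ is uniformly bounded. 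The remaining variance is computed through Proposition \ref{cov:integral}; bounding $|e^{\alpha(t,u)/\eps}|\leq 1$ and invoking the identity $H(2H-1)\int_s^t\int_s^t|u-v|^{2H-2}\,\diff u\,\diff v=(t-s)^{2H}$, it is dominated by $\sigma^2 F_+^2\eps^{-2H}(t-s)^{2H}$. Since $2H<2$, one has $(t-s)^2\leq (t-s)^{2H}$ whenever $t-s\leq 1$, so on subintervals of length at most $\eps$ both contributions combine into $\e[(\xi_t-\xi_s)^2]\leq G|t-s|^{2H}$ for some constant $G=G(\eps,\sigma,H,F_+,\zeta^+,M)$.

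With H\"older regularity at hand, I would partition $[0,t]$ into $N=\lceil t/\eps\rceil$ subintervals $[t_i,t_{i+1})$ of length at most $\eps$. By Lemma \ref{derivOfOrder1}, $\dot{\zeta}=\Onot(1)$; setting $\zeta_{i,-}:=\inf_{[t_i,t_{i+1})}\zeta$ and $\zeta_{i,+}:=\sup_{[t_i,t_{i+1})}\zeta$, this gives $\zeta_{i,+}-\zeta_{i,-}=\Onot(\eps)$ and, together with $\zeta\geq \zeta^->0$, the multiplicative control $\zeta_{i,-}/\zeta_{i,+}=1-\Onot(\eps)$. On each subinterval I would use the pointwise bound $\sqrt{\zeta(r)}\geq\sqrt{\zeta_{i,-}}$ and the symmetry of the centered Gaussian process $\xi$ to replace $|\xi_r|$ by $\xi_r$ at the cost of a factor $2$. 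Corollary \ref{CorgaussineqforHolderCont} applied with $b-a=\eps$, threshold $c=h\sqrt{\zeta_{i,-}}$, and $\sigma^2(A_i)\leq \sigma^2\zeta_{i,+}$ (via \eqref{relSlowManVar}) then produces
$$\p\!\left(\sup_{t_i\leq r<t_{i+1}}\frac{|\xi_r|}{\sqrt{\zeta(r)}}\geq h\right) \leq 2K\eps\, h^{1/H}(\zeta_{i,-})^{1/(2H)}\exp\!\left(-\frac{h^2}{2\sigma^2}\bigl(1-\Onot(\eps)\bigr)\right).$$

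Finally, summing over $i=0,\ldots,N-1$ by subadditivity and using $N\eps\leq t+\eps$ as well as the uniform bound $\zeta_{i,-}\leq \zeta^+\leq F_+^2/a^{2H}\cdot H\Gamma(2H)+\Onot(\eps)$ coming from \eqref{SlowManifVariance}, and splitting $\exp(-\frac{h^2}{2\sigma^2}(1-\Onot(\eps)))=\exp(\frac{h^2}{2\sigma^2}\Onot(\eps))\exp(-\frac{h^2}{2\sigma^2})$, yields the announced inequality with a new constant $K=K(t,\eps,\sigma,H)$ absorbing all prefactors. The main obstacle is the H\"older estimate in the first step: the prefactor $e^{\alpha(t,s)/\eps}$ produces an apparent $(t-s)^2/\eps^2$ contribution which has to be absorbed into $(t-s)^{2H}$ using $H<1$, and the $\eps$-dependence of the resulting H\"older constant must be tracked carefully so that it can be cleanly hidden inside $K$. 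Once the partition scale $\eps$ is fixed, the remaining steps are essentially bookkeeping.
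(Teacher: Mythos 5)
Your argument is correct and follows essentially the same route as the paper: establish mean-square H\"older continuity of $(\xi_r)$ with exponent $2H$, partition $[0,t]$ into intervals of length $\Onot(\eps)$, apply Corollary~\ref{CorgaussineqforHolderCont} on each subinterval using $\dot\zeta=\Onot(1)$ to control the ratio of variance to $\zeta$, and sum by subadditivity. The only difference is cosmetic: you obtain the H\"older bound via the cocycle identity and the $L^2$ triangle inequality (two terms), whereas the paper expands $\e[(\xi_{r_1}-\xi_{r_2})^2]$ directly and estimates the resulting three pieces; both yield the same $\eps$-dependent H\"older constant $G$.
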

\begin{proof}
Let $t>0$. In order to apply Theorem \ref{gaussineqforHolderCont} we have to prove mean-square H\"older continuity of $\left(\xi_r\right)_{r\in [0,t]}$. For $t\geq r_1 > r_2 \geq 0$
\begin{align}
&\e\left[ \left( \xi_{r_1} - \xi_{r_2} \right)^2\right]\notag \\
&=\frac{\sigma^2}{\eps^{2H}} \int_{0}^{r_1} \int_{0}^{r_1} \left( e^{\alpha(r_1,u)/\eps} - \mathds{1}_{\{u \leq r_2\}}e^{\alpha(r_2,u)/\eps}\right) \left( e^{\alpha(r_1,v)/\eps} - \mathds{1}_{\{v \leq r_2\}}e^{\alpha(r_2,v)/\eps}\right) \notag\\
& \qquad \cdot F(u) F(v) \phi(u-v) \diff u \diff v \notag \\
&=\frac{\sigma^2}{\eps^{2H}} \int_{0}^{r_2} \int_{0}^{r_2} \left( e^{\alpha(r_1,u)/\eps} - e^{\alpha(r_2,u)/\eps}\right) \left( e^{\alpha(r_1,v)/\eps} -e^{\alpha(r_2,v)/\eps}\right) F(u) F(v) \phi(u-v) \diff u \diff v \label{proofHoldercont1} \\
&\qquad + 2\frac{\sigma^2}{\eps^{2H}} \int_{r_2}^{r_1} \int_{0}^{r_2} \left( e^{\alpha(r_1,u)/\eps} - e^{\alpha(r_2,u)/\eps}\right)e^{\alpha(r_1,v)/\eps} F(u) F(v) \phi(u-v) \diff u \diff v \label{proofHoldercont2}\\
& \qquad + \frac{\sigma^2}{\eps^{2H}} \int_{r_2}^{r_1} \int_{r_2}^{r_1}  e^{\alpha(r_1,u)/\eps} e^{\alpha(r_1,v)/\eps} F(u) F(v) \phi(u-v) \diff u \diff v. \label{proofHoldercont3}
\end{align}
Lipschitz continuity of $r \mapsto e^{\alpha(r,u)/\eps}$ for arbitrary $u \geq 0$ (with Lipschitz constant $L = \frac{a_+}{\eps}$, where $a_+ := \sup_{r \in I} |a(r)|$) yields for \eqref{proofHoldercont1} 
\begin{align*}
&\frac{\sigma^2}{\eps^{2H}} \int_{0}^{r_2} \int_{0}^{r_2} \left( e^{\alpha(r_1,u)/\eps} - e^{\alpha(r_2,u)/\eps}\right) \left( e^{\alpha(r_1,v)/\eps} -e^{\alpha(r_2,v)/\eps}\right) F(u) F(v) \phi(u-v) \diff u \diff v \\
&\leq \frac{\sigma^2a_+^2}{\eps^{2H+2}} \int_{0}^{r_2} \int_{0}^{r_2} F(u) F(v) \phi(u-v) \diff u \diff v \left| r_1 - r_2 \right|^2 \\
&\leq \frac{\sigma^2a_+^2}{\eps^{2H+2}} F_+^2 \int_{0}^{t} \int_{0}^{t} \phi(u-v) \diff u \diff v \left| r_1 - r_2 \right|^2 
\leq \frac{\sigma^2a_+^2}{\eps^{2H+2}} F_+^2 t^{2} \left| r_1 - r_2 \right|^{2H}. 
\end{align*}
Similarly we can show for \eqref{proofHoldercont2} 
\begin{align*}
&2\frac{\sigma^2}{\eps^{2H}} \int_{r_2}^{r_1} \int_{0}^{r_2} \left( e^{\alpha(r_1,u)/\eps} - e^{\alpha(r_2,u)/\eps}\right)e^{\alpha(r_1,v)/\eps} F(u) F(v) \phi(u-v) \diff u \diff v \\
&\leq 2\frac{\sigma^2}{\eps^{2H}} \int_{r_2}^{r_1} e^{\alpha(r_1,v)/\eps} F(v) \int_{0}^{r_2}  F(u) \phi(u-v) \diff u \diff v \left| r_1 - r_2 \right| \\
&\leq \frac{\sigma^2}{\eps^{2H}} F_+^2 \left( (r_1 - r_2)^{2H} - (r_1^{2H} - r_2^{2H}) \right) \left| r_1 - r_2 \right| 
\leq \frac{\sigma^2}{\eps^{2H}} F_+^2 t (r_1 - r_2)^{2H}.
\end{align*}
Last but not least \eqref{proofHoldercont3} can be estimated as follows
\begin{align*}
&\frac{\sigma^2}{\eps^{2H}} \int_{r_2}^{r_1} \int_{r_2}^{r_1}  e^{\alpha(r_1,u)/\eps} e^{\alpha(r_1,v)/\eps} F(u) F(v) \phi(u-v) \diff u \diff v \\
&\leq \frac{\sigma^2}{\eps^{2H}} F_+^2 \int_{r_2}^{r_1} \int_{r_2}^{r_1}  \phi(u-v) \diff u \diff v 
= \frac{\sigma^2}{\eps^{2H}} F_+^2 \left| r_1 - r_2 \right|^{2H}.
\end{align*}
By combining the three estimates we obtain that, for a constant $G = G(t,\eps,\sigma,H) > 0$, it holds
\begin{align*}
&\e\left[ \left( \xi_{r_1} - \xi_{r_2} \right)^2\right] \leq G \left| r_1 - r_2 \right|^{2H}.
\end{align*}
Let $0 = t_0 < t_1 < \ldots < t_N = t$ be a partition of the interval $[0,t]$ such that
\begin{align*}
t_{i+1} - t_i = \Onot(\eps) \quad \text{for} \quad 0 \leq i < N.
\end{align*}
For each $i \in \{0, \ldots, N-1\}$ we have by Corollary \ref{CorgaussineqforHolderCont} for $K = K(G, 2H)$
\begin{align*}
&\p \left( \sup_{t_i \leq r < t_{i+1}} \frac{\left| \xi_r \right|}{\sqrt{\zeta(r)}} \geq h \right) \\
\leq\: & 2 \p \left( \sup_{t_i \leq r < t_{i+1}} \xi_{r} \geq h \inf_{t_i \leq r < t_{i+1}} \sqrt{\zeta(r)} \right) \\
\leq\: & K (t_{i+1}-t_i) \left( h \inf_{t_i \leq r < t_{i+1}} \sqrt{\zeta(r)} \right)^{\frac{1}{H}} \exp\left(- \frac{h^2 \inf_{t_i \leq r \leq t_{i+1}} \zeta(r)}{2\sup_{t_i \leq r \leq t_{i+1}}\var(\xi_{r})} \right) \\
\overset{\eqref{relSlowManVar}}{\leq} & K (t_{i+1}-t_i) h^{\frac{1}{H}} \left(  \sqrt{\frac{F_+^2}{a^{2H}} H \Gamma(2H) + \Onot (\eps)} \right)^{\frac{1}{H}} \exp\left(- \frac{h^2 \inf_{t_i \leq r \leq t_{i+1}} \zeta(r)}{2\sigma^2\sup_{t_i \leq r \leq t_{i+1}}\zeta(r)} \right) \\
\leq\: & K (t_{i+1}-t_i) h^{\frac{1}{H}} \left( \frac{F_+^2}{a^{2H}} H \Gamma(2H) + \Onot (\eps) \right)^{\frac{1}{2H}} \exp\left( -\frac{h^2}{2\sigma^2} (1 - \Onot (\eps))\right),
\end{align*}
where the last inequality follows by $\dot{\zeta}(r) = \Onot(1)$, see Lemma \ref{derivOfOrder1}.
This yields by the subadditivity of the probability measure
\begin{align*}
\p \left( \sup_{0 \leq r < t} \frac{\left| \xi_r \right|}{\sqrt{\zeta(r)}} \geq h \right)
&\leq \sum_{i=0}^{N-1} \p \left( \sup_{t_i \leq r < t_{i+1}} \frac{\left| \xi_r \right|}{\sqrt{\zeta(r)}} \geq h \right) \\
&\leq K t h^{\frac{1}{H}} \left( \frac{F_+^2}{a^{2H}} H \Gamma(2H) + \Onot (\eps) \right)^{\frac{1}{2H}} \exp\left( -\frac{h^2}{2\sigma^2} (1 - \Onot (\eps))\right) \\
&\leq K t \exp\left( \frac{h^2}{2\sigma^2}\Onot (\eps)\right) h^{\frac{1}{H}} \left( \frac{F_+^2}{a^{2H}} H \Gamma(2H) + \Onot (\eps) \right)^{\frac{1}{2H}} \exp\left( -\frac{h^2}{2\sigma^2} \right).
\end{align*}
Therefore, the proof is finished.
\end{proof}

%%%%%%%%%%%%%%%%%%%%%%%%%%%%%%%%%%%%%%%%%%%%%%%%%%%%%%%%%%%%%%%%%%%%%%%%%%%%%%%%%%%%%%%%%%%%%%%%%%%%%%%%%%%%%%%%%%%%%%%%%%%%%%%%%%%%%
\subsection{Comparison of the two Variants}

In this section, we will compare the two variants in view of varying the noise intensity given by $\sigma > 0$ and the time scale parameter $\eps > 0$. In order to better understand what to expect under these variations we first heuristically describe their effect on the underlying SDE
\begin{align*}
\diff \xi_t = \frac{1}{\eps} a(t) \xi_t \diff t + \frac{\sigma}{\eps^H} F(t) \diff W^H_t.
\end{align*}
We can directly see that a smaller $\sigma$ reduces the intensity of the fraction Brownian motion noise. In particular, as $\sigma$ is decreasing, the probability that $(\xi_r)_{r \in I}$ exits $\mathcal{B}(h)$ on some interval $[0,t]$ should become smaller. For a smaller $\eps$ the attraction towards the slow manifold becomes stronger, however also the noise intensity increases. We expect small deviations if $\frac{\sigma}{\eps^H}$ is sufficiently small.
\par
Suppose we are in the situation of Theorem \ref{thmvariantbernsteinineq} and Theorem \ref{thmvariantgaussineqHoldercont}.
To simplify the comparison the results of both variant 1
\begin{align*}
\p \left( \sup_{0 \leq r < t} \frac{\left| \xi_r \right|}{\sqrt{\zeta(r)}} \geq h \right)
&\leq 2e \left\lceil \frac{\left| \alpha(t) \right|}{\eps} \frac{h^2}{\sigma^2} (1 + \Onot (\eps))\right\rceil\exp\left( -\frac{h^2}{2\sigma^2} \right),
\intertext{and variant 2}
\p \left( \sup_{0 \leq r < t} \frac{\left| \xi_r \right|}{\sqrt{\zeta(r)}} \geq h \right) 
&\leq K t \exp\left( \frac{h^2}{2\sigma^2}\Onot (\eps)\right) h^{\frac{1}{H}} \left( \frac{F_+^2}{a^{2H}} H \Gamma(2H) + \Onot (\eps) \right)^{\frac{1}{2H}} \exp\left( -\frac{h^2}{2\sigma^2} \right),
\end{align*}
where $K = K(t,\eps,\sigma,H) > 0$, are displayed here.
Unfortunately, we do not know the dependence of $K$ on the other parameters, so we can only do a qualitative comparison up to some extent. By looking at the proof of Theorem \ref{thmvariantgaussineqHoldercont} we guess that $K$ is increasing in $G$, so that we assume for the forthcoming analysis that $K$ is increasing in $\sigma$, decreasing in $\eps$ and increasing in $t$.
In variant 1, we see the same interplay of $\sigma$ and $h$ as already observed in the analysis of $\eqref{estimateatt}$ because the exponential dominates the linear term in the prefactor, and the same holds true for variant 2 as long as $\eps \ll \frac{h^2}{\sigma^2}$ (for $\exp\left( \frac{h^2}{2\sigma^2}\Onot (\eps)\right)$ to remain relatively small), which is true in many applications.
For $\eps \rightarrow 0$ the estimate in variant 1 becomes larger, whereas in variant 2 it does not seem to have a huge effect on the bound. However, the increase might be hidden in $K$.
As the time $t$ increases, it obviously becomes more likely that $\xi_t$ has already exited $\mathcal{B}(h)$ at least once. In variant 1 this increase is displayed linearly in $t$ as $r \mapsto a(r)$ is uniformly bounded and thus $\alpha(t) = \Theta(t)$. Variant 2 shows an increase which is at least linear in $t$ because $K$ might be increasing in $t$ as well.
This means that in variant 1 we have to pick $h$ large enough such that $\frac{h^2}{\sigma^2}$ is significantly larger than $\ln\left(\frac{t}{\eps}\right)$. For variant 2 we have to choose $h$ in a suitable way that $\frac{h^2}{\sigma^2}$ is larger than $\frac{h^2}{\sigma^2}\Onot(\eps) + \frac{1}{H}\ln(h) + \ln(Kt)$. Although we cannot prove it, it seems that variant 1 yields a sharper bound.
Last but not least, note that the estimate in variant 1 coincides with the estimate derived for the Brownian motion case, see \cite[Proposition 3.1.5]{NoiseSlowFastSys}. The dependence of the Hurst parameter $H$ is completely hidden in the structure of the neighborhood $\mathcal{B}(h)$, which depends on the slow manifold. This also intuitively makes sense because we are ``almost'' dividing by the variance. Furthermore, in the Brownian motion case this estimate is quite close to the actual distribution of the exit time $\tau_{\mathcal{B}(h)}$, see \cite[Theorem 3.1.6]{NoiseSlowFastSys} and the comments below.

%%%%%%%%%%%%%%%%%%%%%%%%%%%%%%%%%%%%%%%%%%%%%%%%%%%%%%%%%%%%%%%%%%%%%%%%%%%%%%%%%%%%%%%%%%%%%%%%%%%%%%%%%%%%%%%%%%%%%%%%%%%%%%%%%%%%%
\subsection{Back to the Original System}
Now that we have convinced ourselves that the most promising estimate is given in Theorem \ref{thmvariantbernsteinineq} it remains to generalize the result to different scenarios which may be of interest.

%%%%%%%%%%%%%%%%%%%%%%%%%%%%%%%%%%%%%%%%%%%%%%%%%%%%%%%%%%%%%%%%%%%%%%%%%%%%%%%%%%%%%%%%%%%%%%%%%%%%%%%%%%%%%%%%%%%%%%%%%%%%%%%%%%%%%
\subsubsection{The Nonlinear Case}

Recall that we have rewritten the SDE \eqref{1DTaylor} satisfied by the deviations around the slow manifold $\xi_t = x_t - \bar{x}(t, \eps)$
\begin{align*}
\diff \xi_t 
&= \frac{1}{\eps} [a(t,\eps)\xi_t + b(\xi_t,t,\eps)] \diff t + \frac{\sigma}{\eps^H} F(t) \diff W^H_t,
\end{align*}
with $a$ being the linear drift term and $b$ containing the (possible) nonlinearities of the equation satisfying $\left| b(x,t,\eps) \right| \leq M \left| x \right|^2$.
As we expect that the nonlinear term $b$ does not influence the deviations too strongly near the critical manifold, we use the same neighborhood $\mathcal{B}(h)$. This in particular implies that we keep the same simple description of it, which we have derived in Proposition \ref{fastSlowVariance}. The bound on $b$ will help us to control it inside of $\mathcal{B}(h)$. For the case that $(x_t)_{t \in I}$ is starting on the slow manifold, i.e. $\xi_0 = 0$ we obtain:

\begin{theorem}
\label{thmNonlinearCase}
Let $t \in I$. For $h$ sufficiently small it holds
\begin{align*}
\p \left( \sup_{0 \leq r < t} \frac{\left| \xi_r \right|}{\sqrt{\zeta(r)}} \geq h \right)
\leq 2e \left\lceil \frac{\left| \alpha(t) \right|}{\eps} \kappa^2 \frac{h^2}{\sigma^2} (1 + \Onot (\eps))\right\rceil\exp\left( -\kappa^2\frac{h^2}{2\sigma^2} \right),
\end{align*}
where $\kappa = 1 - \Onot(h)$.
\end{theorem}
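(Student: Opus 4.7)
The strategy is to mimic the pathwise decomposition used by Berglund and Gentz in the Brownian case~\cite{NoiseSlowFastSys}, splitting $\xi_t = \xi^0_t + R_t$ where $\xi^0$ solves the linear SDE~\eqref{1Dlinearization} driven by the \emph{same} fBm path as $\xi$ with initial datum $\xi^0_0 = 0$ (so that Theorem~\ref{thmvariantbernsteinineq} applies directly to $\xi^0$). Since the fBm enters only additively, the remainder $R = \xi - \xi^0$ satisfies the random but non-stochastic ODE
\begin{align*}
\eps \dot R_t = a(t) R_t + b(\xi_t, t, \eps), \qquad R_0 = 0,
\end{align*}
which can be analysed pathwise via Duhamel's formula -- no further fractional stochastic calculus is needed for this step. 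The key device is then the exit time $\tau := \inf\{r > 0 : (\xi_r, r) \notin \mathcal{B}(h)\}$, which forces the crude but decisive pointwise bound $|b(\xi_r, r, \eps)| \leq M|\xi_r|^2 \leq M h^2 \zeta^+$ for every $r < \tau$.

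With this in hand, the first concrete step is to control the remainder on the stopped interval: using $a(r) \leq -a < 0$ from Assumption~\ref{assStable} together with Duhamel, one obtains for every $r \leq \tau$
\begin{align*}
|R_r| \leq \int_0^r e^{\alpha(r,u)/\eps}\,\frac{1}{\eps}\,|b(\xi_u, u, \eps)|\,\diff u \leq \frac{M \zeta^+}{a}\,h^2 =: C h^2,
\end{align*}
and by continuity of $\xi$ and $\xi^0$ the same bound persists at $r = \tau$. The second step is to transfer the exit event from $\xi$ to $\xi^0$. On $\{\tau \leq t\}$ one has $|\xi_\tau| = h\sqrt{\zeta(\tau)}$, so the reverse triangle inequality together with the uniform bound $\zeta(\tau) \geq \zeta^- > 0$ gives
\begin{align*}
\frac{|\xi^0_\tau|}{\sqrt{\zeta(\tau)}} \geq h - \frac{|R_\tau|}{\sqrt{\zeta(\tau)}} \geq h\left(1 - \frac{C h}{\sqrt{\zeta^-}}\right) =: \kappa h,
\end{align*}
with $\kappa = 1 - \Onot(h)$, valid for $h$ small enough that $\kappa > 0$. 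Consequently the inclusion $\{\tau \leq t\} \subset \bigl\{\sup_{0 \leq r < t} |\xi^0_r|/\sqrt{\zeta(r)} \geq \kappa h\bigr\}$ holds, and applying Theorem~\ref{thmvariantbernsteinineq} to the linear process $\xi^0$ at the reduced level $\kappa h$ reproduces the claimed bound verbatim, with every occurrence of $h$ on the right-hand side of Theorem~\ref{thmvariantbernsteinineq} replaced by $\kappa h$.

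The genuinely nontrivial point -- and the reason for the ``$h$ sufficiently small'' hypothesis -- is ensuring that the quadratic nonlinearity stays dominated by the stabilising linear drift on the stopped interval, which is exactly what keeps $R$ of size $\Onot(h^2)$ and makes $\kappa$ of the form $1-\Onot(h)$. A minor technicality worth flagging is that the quadratic remainder $b$ is not globally Lipschitz, so Theorem~\ref{thmExUniqOfSDE} does not apply verbatim; one circumvents this in the standard way by cutting off $b$ smoothly outside a large ball containing $\mathcal{B}(h)$, which leaves the trajectories of $\xi$ unchanged up to $\tau$ and therefore does not affect any of the estimates above. Apart from this, the proof is essentially a one-step reduction to the linear bound already proved in Theorem~\ref{thmvariantbernsteinineq}.
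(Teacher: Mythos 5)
Your proposal is correct and follows essentially the same route as the paper: decompose $\xi$ into the linear fractional Ornstein--Uhlenbeck part $\xi^0$ and the Duhamel remainder driven by $b$, bound the remainder pathwise inside $\mathcal{B}(h)$ by $\frac{M\zeta^+}{a}h^2$ using $|b|\leq M|\xi|^2$ and the stability $a(r)\leq -a$, and then invoke Theorem~\ref{thmvariantbernsteinineq} at the reduced level $\kappa h$ with $\kappa = 1-\Onot(h)$. The paper phrases the reduction as a split $h=h_0+h_1$ with $P_1(h_1)=0$ rather than via the reverse triangle inequality at the exit time, but this is only a cosmetic difference; your extra remark about cutting off $b$ is a reasonable (if not strictly needed, given Assumption~\ref{assStable}) bookkeeping point.
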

\begin{proof}
As previously motivated before we treat the nonlinear drift term $b$ as perturbation of the linear system, i.e. split the solution of \eqref{1DTaylor}
\begin{align*}
\xi_t = \xi^0_t + \xi^1_t,
\end{align*}
where $\xi^0_t$ is a solution to the linear system, which we have already studied in detail, and
\begin{align*}
\xi^1_t = \frac{1}{\eps} \int_0^t e^{\alpha(t,u)/\eps} b(\xi_u,u,\eps) \diff u.
\end{align*}
Then for $h = h_0 + h_1$, $\:h_0, h_1 \geq 0$ we consider
\begin{align*}
&\p\left( \sup_{0 \leq r < t} \frac{\left| \xi_r \right|}{\sqrt{\zeta(r)}} \geq h \right) 
= \p\left( \sup_{0 \leq r < t \wedge \tau_{\mathcal{B}(h)}} \frac{\left| \xi_r \right|}{\sqrt{\zeta(r)}} \geq h \right) \\
\leq \:&\underbrace{\p\left( \sup_{0 \leq r < t} \frac{\left| \xi^0_r \right|}{\sqrt{\zeta(r)}} \geq h_0 \right)}_{\substack{=:P_0(h_0)}}
+ \underbrace{\p\left( \sup_{0 \leq r < t \wedge \tau_{\mathcal{B}(h)}} \frac{\left| \xi^1_r \right|}{\sqrt{\zeta(r)}} \geq h_1 \right)}_{\substack{=:P_1(h_1)}}
\end{align*}
It remains to prove that $P_1(h_1)$ is small. Observe that due to continuity of $(\xi_t)_{t \in I}$ we have for $r < \tau_{\mathcal{B}(h)}$ 
\begin{align*}
\left| \xi_r \right| \leq h\sqrt{\zeta(r)} \leq h\sqrt{\zeta^+}.
\end{align*}
This enables us to control $\frac{\left| \xi^1_r \right|}{\sqrt{\zeta(r)}}$ inside of  $\mathcal{B}(h)$ thanks to the bound on the Taylor remainder term $b$
\begin{align*}
\frac{\left| \xi^1_r \right|}{\sqrt{\zeta(r)}} 
&\leq \frac{1}{\eps \sqrt{\zeta(r)}} \int_0^r e^{\alpha(r,u)/\eps} \left| b(\xi_u, u,\eps)\right| \diff u \\
&\leq \frac{M}{\eps \sqrt{\zeta(r)}} \int_0^r e^{\alpha(r,u)/\eps} \underbrace{\left| \xi_u\right|^2}_{\substack{\leq h^2}\zeta^+} \diff u \leq \frac{Mh^2}{a}\frac{\zeta^+}{\sqrt{\zeta^-}}.
\end{align*}
Hence, choosing $h_1 = 2 \frac{Mh^2}{a}\frac{\zeta^+}{\sqrt{\zeta^-}}$ results in $P_1(h_1) = 0$. Note that this is choice is possible as long as $h \leq \frac{a}{2M} \frac{\sqrt{\zeta^-}}{\zeta^+}$, which is in $\Onot(1)$, so requiring $h \gg \sigma$ is possible. Indeed, the choice of $h$ is usually even ``smaller'' than $\Onot(1)$, so that $h_0 = h - h_1 = h(1-\Onot(h))$. Applying Theorem~\ref{thmvariantbernsteinineq} to $h_0$ now yields the claim.
\end{proof}

\begin{rem}
With this approach we lose some accuracy ($\kappa = 1 - \Onot(h)$ instead of $\kappa = 1$) in the exponential. This has more effect on the increase of probability than in the prefactor. To overcome this difficulty it might be better to adapt the neighborhood depending on the nonlinearities. 
\end{rem}

%%%%%%%%%%%%%%%%%%%%%%%%%%%%%%%%%%%%%%%%%%%%%%%%%%%%%%%%%%%%%%%%%%%%%%%%%%%%%%%%%%%%%%%%%%%%%%%%%%%%%%%%%%%%%%%%%%%%%%%%%%%%%%%%%%%%%
\subsubsection{Behavior Close to the Slow Manifold}

For the deterministic system we get in the case of a uniformly asymptotically stable slow manifold that solutions starting close to it are attracted exponentially fast. Given low enough noise intensity a similar behavior can be observed in the noisy system, i.e., solutions have small deviations around the deterministic solution and after some (small) time $t_0$ we can again observe small deviations around the slow manifold.

\begin{theorem}
Let $t > 0$, $d>0$. There is $\delta > 0$ and some time $t_0 > 0$ such that the solutions $(x_t)_{t \in I}$ of~\eqref{1DSysinSlowTime} with initial condition $x_0$ satisfying
\begin{align*}
\left| x_0 - \bar{x}(0,\eps) \right| < \delta
\end{align*}
are attracted by the slow manifold. That is, up to time $t_0$ the solution $(x_t)_{t \in I}$ is close to the deterministic solution $x^{\det}$ 
\begin{align*}
\p \left( \sup_{0 \leq r < t_0} \frac{\left| x_r - x_r^{\det} \right|}{\sqrt{\tilde{\zeta}(r)}} \geq h \right)
\leq 2e \left\lceil \frac{\left| \tilde{\alpha}(t_0) \right|}{\eps} \kappa^2 \frac{h^2}{\sigma^2} (1 + \Onot (\eps))\right\rceil\exp\left( -\kappa^2\frac{h^2}{2\sigma^2} \right),
\end{align*}
where the $\tilde{}$ denotes the different values due to linearization around $x^{\det}$ instead of the slow manifold $\bar{x}(t,\eps)$.
After $t_0$ we obtain almost the same behavior as in the case where $x_0 = x(0,\eps)$, i.e. for $t \geq t_0$
\begin{align*}
\p \left( \sup_{t_0 \leq r < t} \frac{\left| x_r - \bar{x}(r,\eps) \right|}{\sqrt{\zeta(r)}} \geq h \right)
\leq 2e \left\lceil \frac{\left| \alpha(t) \right|}{\eps} \kappa^2 \frac{(h-d)^2}{\sigma^2} (1 + \Onot (\eps))\right\rceil\exp\left( -\kappa^2\frac{(h-d)^2}{2\sigma^2} \right).
\end{align*}
\end{theorem}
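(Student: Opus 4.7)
The plan is to prove the two displayed estimates sequentially, treating $[0,t_0]$ and $[t_0,t]$ as separate regimes, and to glue them together via a choice of $\delta$ and $t_0$ dictated by the exponential attractivity of $\bar{x}(\cdot,\eps)$ in the deterministic dynamics.

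Phase 1 (linearization around $x^{\det}$). First I would choose $\delta>0$ small enough that any deterministic trajectory $x^{\det}$ issued from $|x_0-\bar{x}(0,\eps)|<\delta$ remains in a neighborhood of the slow manifold on which Assumption \ref{assStable} still delivers a uniform spectral bound. Setting $\tilde{\xi}_t:=x_t-x^{\det}_t$ and Taylor-expanding $f$ around $x^{\det}_t$ gives an SDE of precisely the same shape as \eqref{1DTaylor}:
\begin{align*}
\diff \tilde{\xi}_t = \frac{1}{\eps}\bigl[\tilde{a}(t,\eps)\tilde{\xi}_t + \tilde{b}(\tilde{\xi}_t,t,\eps)\bigr]\,\diff t + \frac{\sigma}{\eps^H} F(t)\,\diff W^H_t, \qquad \tilde{\xi}_0=0,
\end{align*}
with $\tilde{a}(t,\eps):=\partial_x f(x^{\det}_t,t,\eps)\leq -\tilde{a}<0$ and $|\tilde{b}(x,t,\eps)|\leq M|x|^2$. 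Because $\tilde{\xi}_0=0$, Theorem~\ref{thmNonlinearCase} applies verbatim to this linearization and yields the first stated estimate with $\tilde{\zeta}$ and $\tilde{\alpha}$ the analogues of $\zeta,\alpha$ built from $\tilde{a}$.

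Phase 2 (handover at $t_0$ and linearization around $\bar{x}$). Using that the deterministic fast-slow dynamics contract toward the slow manifold at rate of order $a/\eps$, one can choose $t_0=\Onot(\eps|\log\delta|)$ so that $|x^{\det}_{t_0}-\bar{x}(t_0,\eps)|\leq \tfrac{d}{2}\sqrt{\zeta^{-}}$. Combined with phase 1 (restricted to a suitable event of overwhelming probability), this ensures $|x_{t_0}-\bar{x}(t_0,\eps)|\leq d\sqrt{\zeta(t_0)}$. For $t\geq t_0$ I would then work with $\xi_t:=x_t-\bar{x}(t,\eps)$ and split
\begin{align*}
\xi_t = \xi_{t_0}\,e^{\alpha(t,t_0)/\eps} + \xi_t^{\mathrm{in}},
\end{align*}
where $\xi_t^{\mathrm{in}}$ solves the same SDE with vanishing initial datum at $t_0$. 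Since $a(t)\leq -a$ and $\zeta$ is sandwiched between $\zeta^{-}$ and $\zeta^{+}$, the homogeneous term satisfies $|\xi_{t_0}e^{\alpha(t,t_0)/\eps}|/\sqrt{\zeta(t)}\leq d$ uniformly in $t\geq t_0$. Consequently $\{\sup_{t_0\leq r<t}|\xi_r|/\sqrt{\zeta(r)}\geq h\}\subset\{\sup_{t_0\leq r<t}|\xi^{\mathrm{in}}_r|/\sqrt{\zeta(r)}\geq h-d\}$, and since $\xi^{\mathrm{in}}$ is a shifted fractional Ornstein--Uhlenbeck process starting on the slow manifold, Theorem~\ref{thmNonlinearCase} applied with threshold $h-d$ delivers the second bound.

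Main obstacle. The delicate point is the handover at $t_0$: the initial datum for phase 2 is a random variable whose law is not concentrated on the slow manifold, and one must control its homogeneous decay against the non-autonomous variance $\zeta(t)$ in such a way that the exponential rate in $(h-d)^2/\sigma^2$ is preserved. Two subsidiary technicalities deserve attention: first, that the sample-path estimates of Theorem~\ref{thmvariantbernsteinineq} and Theorem~\ref{thmNonlinearCase} remain valid on the shifted interval $[t_0,t]$ starting from $\xi^{\mathrm{in}}_{t_0}=0$, which follows because the Gaussian machinery of Section~\ref{secEstGaussProc} is applied to the process $\xi^{\mathrm{in}}$ whose covariance via Proposition~\ref{cov:integral} involves only the deterministic kernel $e^{\int_r^{\cdot}a(u)\diff u/\eps}F(r)$ restricted to $r\geq t_0$; and second, that the dependence on the phase-1 event can be absorbed into the multiplicative prefactors without degrading the stated exponential rate. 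The choice of $\delta$ and $t_0$ must be made simultaneously, balancing the requirement that $x^{\det}_{t_0}$ be $d$-close to $\bar{x}(t_0,\eps)$ against the requirement that the linearization $\tilde{a}$ stay spectrally bounded on $[0,t_0]$.
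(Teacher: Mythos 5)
Your Phase 1 is essentially the paper's argument: linearize around $x^{\det}$, observe that the deviation starts at exactly zero, and invoke Theorem~\ref{thmNonlinearCase}; this part is fine (with the same implicit requirement, also present in the paper, that $\delta$ be small enough for the exponential attraction and a uniform spectral bound along $x^{\det}$).

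Your Phase 2, however, departs from the paper and has a genuine gap at the handover. The splitting $\xi_t=\xi_{t_0}e^{\alpha(t,t_0)/\eps}+\xi_t^{\mathrm{in}}$ is exact only for the \emph{linear} equation; for the nonlinear deviation equation \eqref{1DTaylor} the Taylor remainder $b$ is evaluated along the full path $\xi$, so $\xi^{\mathrm{in}}$ does not solve the same SDE with zero datum and Theorem~\ref{thmNonlinearCase} cannot be applied ``verbatim'' to it — the perturbation argument would have to be redone with the stopping time $\tau_{\mathcal{B}(h)}$ and the coupled homogeneous part. More seriously, the bound $\left|\xi_{t_0}e^{\alpha(t,t_0)/\eps}\right|/\sqrt{\zeta(t)}\leq d$ only holds on the random event $\{\left|\xi_{t_0}\right|\leq d\sqrt{\zeta^-}\}$; since fBm has neither independent increments nor the Markov property, you cannot condition on this event and treat $\xi^{\mathrm{in}}$ on $[t_0,t]$ as unaffected. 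The honest route is an event inclusion plus union bound, which produces an \emph{additional additive term} of order $\exp\bigl(-\mathrm{const}\,d^2/\sigma^2\bigr)$ (the probability that $\left|\xi_{t_0}\right|$ exceeds the threshold) that is absent from the claimed estimate; your assertion that this ``can be absorbed into the multiplicative prefactors'' is exactly the unproved step, and whether it is absorbable depends on the relative sizes of $d$ and $h-d$. The paper avoids the random handover altogether: for $t\geq t_0$ it writes $x_r-\bar{x}(r,\eps)=(x_r-x_r^{\det})+(x_r^{\det}-\bar{x}(r,\eps))$, where the second piece is \emph{deterministic} and $<d$ for $r\geq t_0$ (so choosing $h_1=d$ makes its probability exactly zero), while the first piece starts at zero and is controlled by the Phase-1 estimate on the whole interval; in addition, the paper enlarges $t_0$ (via the mean value theorem, $\left|\tilde{a}(t,\eps)-a(t,\eps)\right|\leq CM\delta e^{-\kappa t/\eps}$, forcing $t_0\geq\frac{\eps}{\kappa}\ln\bigl(\frac{CM\delta}{\eps}\bigr)$ as well as $t_0\geq\frac{\eps}{\kappa}\ln\bigl(\frac{C\delta}{d}\bigr)$) so that after $t_0$ the linearization around $x^{\det}$ agrees with $a$ up to $\Onot(\eps)$ and the bound can be stated in terms of $\alpha$ and $\zeta$. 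To repair your write-up, either adopt this deterministic triangle-inequality decomposition, or carry the extra union-bound term explicitly and justify its absorption.
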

\begin{proof}
Exponentially fast attraction means that there are constants $\delta,C,\kappa > 0$ such that for $x_0 \in \R$ with $\left| x_0 - \bar{x}(0,\eps) \right| < \delta$ it holds
\begin{align*}
\left| x_t - x(t,\eps) \right| \leq C \left| x_0 - \bar{x}(0,\eps) \right| e^{-\kappa t /\eps}.
\end{align*}
Consider an initial value $x_0 \in \R$ with $\left| x_0 - x(0,\eps) \right| < \delta$ and denote by $x^{\det}$ the solution to the deterministic system ($\sigma = 0$) in \eqref{1DSysinSlowTime}. Instead of linearizing $\eqref{1DSysinSlowTime}$ around $\bar{x}(t,\eps)$, like we did in \eqref{1DTaylor}, we linearize it around $x^{\det}.$ This procedure yields qualitatively the same linearization, with $\tilde{a}(t) := \partial_x f(x_t^{\det}, t, 0)$ instead of $a(t,\eps),$ or respectively $a(t)$, see discussion before, and $\tilde{\zeta}$ adapted accordingly. In particular, even for the nonlinear case we obtain by Theorem \ref{thmNonlinearCase}
\begin{align*}
\p \left( \sup_{0 \leq r < t} \frac{\left| x_r - x_r^{\det} \right|}{\sqrt{\tilde{\zeta}(r)}} \geq h \right)
\leq 2e \left\lceil \frac{\left| \tilde{\alpha}(t) \right|}{\eps} \kappa^2 \frac{h^2}{\sigma^2} (1 + \Onot (\eps))\right\rceil\exp\left( -\kappa^2\frac{h^2}{2\sigma^2} \right),
\end{align*}
where $\tilde{\alpha}(t) = \int_0^t \tilde{a}(r) \diff r$ and $\kappa = 1 - \Onot(h).$
Choose $t_0$ such that for distance $d$
\begin{align*}
\left| x_r^{\det} - \bar{x}(r,\eps) \right| < d ~ \text{ for all }r \geq t_0,
\end{align*}
that is $t_0 \geq \frac{\eps}{\kappa} \ln\left( \frac{C\delta}{d}\right)$.
Furthermore, we have by the mean value theorem
\begin{align*}
\tilde{a}(t,\eps) = \partial_x f(x_t^{\det}, t,\eps) = \partial_x f(\bar{x}(t,\eps),t,\eps) + \partial_{xx}f(\tilde{x},t,\eps)(x_t^{\det} - \bar{x}(t,\eps))
\end{align*}
for some $\tilde{x} = \lambda x_t + (1- \lambda) \bar{x}(t,\eps), \: \lambda \in [0,1].$ So that for $a(t,\eps) = \partial_x f(\bar{x}(t,\eps),t,\eps)$
\begin{align*}
\left| \tilde{a}(t,\eps) - a(t,\eps) \right| \leq CM\delta e^{-\kappa t/\eps}.
\end{align*}
We want this distance to be of order at most $\eps$, so that in total $t_0 \geq \max \left\{ \frac{\eps}{\kappa} \ln\left( \frac{C\delta}{d}\right), \frac{\eps}{\kappa} \ln\left( \frac{CM\delta}{\eps}\right) \right\}.$
Then, up to time $t_0$, we can use the estimate above for $(x_t)_{t \in I}$ close to its deterministic solution. And after $t_0$ the process is already close to the slow manifold and its dynamics, so it makes sense to look at the deviations around $\bar{x}(r,\eps)$.
Splitting again $h = h_0 + h_1, \: h_0, h_1 \geq 0$
\begin{align*}
\p \left( \sup_{t_0 \leq r < t} \frac{\left| x_r - \bar{x}(r,\eps) \right|}{\sqrt{\zeta(r)}} \geq h \right) 
&\leq \p \left( \sup_{t_0 \leq r < t} \frac{\left| x_r - x_r^{\det} \right|}{\sqrt{\zeta(r)}} \geq h_0 \right) 
+ \p \left( \sup_{t_0 \leq r < t} \frac{\left| x_r^{\det} - \bar{x}(r,\eps) \right|}{\sqrt{\zeta(r)}} \geq h_1 \right). 
\end{align*}
Choosing $h_1 = d$ we obtain
$\p \left( \sup_{t_0 \leq r < t} \frac{\left| \bar{x}(r,\eps) - x_r^{\det} \right|}{\sqrt{\zeta(r)}} \geq h_1 \right) = 0.$ Furthermore, since $h_0 = h - d$, we have
\begin{align*}
\p \left( \sup_{t_0 \leq r < t} \frac{\left| x_r - \bar{x}(r,\eps) \right|}{\sqrt{\zeta(r)}} \geq h \right) \leq 2e \left\lceil \frac{\left| \alpha(t) \right|}{\eps} \kappa^2 \frac{(h-d)^2}{\sigma^2} (1 + \Onot (\eps))\right\rceil\exp\left( -\kappa^2\frac{(h-d)^2}{2\sigma^2} \right),
\end{align*}
which finishes the proof.
\end{proof}

%%%%%%%%%%%%%%%%%%%%%%%%%%%%%%%%%%%%%%%%%%%%%%%%%%%%%%%%%%%%%%%%%%%%%%%%%%%%%%%%%%%%%%%%%%%%%%%%%%%%%%%%%%%%%%%%%%%%%%%%%%%%%%%%%%%%%
\subsubsection{More Complicated Slow Dynamics}

So far, we have considered the case where the slow dynamics is completely uniform and regular, i.e.,
\begin{align*}
\diff y_t &= 1 \diff t.
\end{align*}
However, in applications many interesting systems contain more complicated slow variables. In fact, this is particularly relevant if one wants to reduce the dynamics to the slow manifold. The reduced equation usually qualitatively describes the dynamics of the slow variables around the slow manifold quite well. This section will clarify that the theory developed so far can be extended to more complicated slow dynamics in two steps. We first generalize our result to deterministic slow dynamics, which may also influence the diffusion term and then consider a fully coupled system.

%%%%%%%%%%%%%%%%%%%%%%%%%%%%%%%%%%%%%%%%%%%%%%%%%%%%%%%%%%%%%%%%%%%%%%%%%%%%%%%%%%%%%%%%%%%%%%%%%%%%%%%%%%%%%%%%%%%%%%%%%%%%%%%%%%%%%
\subsubsection*{Deterministic Slow Dynamics}

Hence, we consider systems of the form
\begin{equation}
\label{1DSysinSlowTimeComplDyn}
\begin{aligned}
\diff x_t &= \frac{1}{\eps} f(x_t, y_t,\eps) \diff t + \frac{\sigma}{\eps^H} F(y_t)  \diff W^H_t \\
\diff y_t &= g(y_t,\eps) \diff t.
\end{aligned}
\end{equation}
We have to adapt the assumptions a bit.
\begin{ass}
Stable Case, Non-trivial Slow Dynamics
\begin{enumerate}
\item \emph{Regularity:} The functions $f \in \C^2(\mathcal{D} \times [0, \infty);\R)$, $g \in \C^2(\pi_2(\mathcal{D})\times [0, \infty);\R)$ and $F \in \C^1(\pi_2(\mathcal{D});(0,\infty))$, as well as their derivatives up to order $2$ are uniformly bounded on an open subset $\mathcal{D} \subset \R^2$ by a constant $M \geq 0$. Here $\pi_2$ is the projection onto the second coordinate.
% in $\R$
\item \emph{Critical manifold:} There is an $x^*: \mathcal{D}_0 \rightarrow \R$ for $\mathcal{D}_0 \subset \pi_2(\mathcal{D})$ open such that
\begin{align*}
C_0 = \{ (x,y) \in \mathcal{D}: ~ y \in \mathcal{D}_0, \: x = x^*(y)\}
\end{align*}
is a critical manifold of the system \eqref{1DSysinSlowTimeComplDyn}.
\item \emph{Stability:} For $a(y) := \partial_x f(x^*(y),y, 0)$ there is $a > 0$ such that
\begin{align*}
a(y) \leq -a
\end{align*}
for all $y \in \mathcal{D}_0$.
\item \emph{Global existence:} The solutions $(x,y)$ of \eqref{1DSysinSlowTimeComplDyn} are defined for all $t \in [0, \infty)$.
\end{enumerate}
\end{ass}
Under these assumptions the system \eqref{1DSysinSlowTimeComplDyn} has an attracting slow manifold 
\begin{align*}
C_{\eps} = \left\{ (x,y) \in \mathcal{D}: ~ y \in \mathcal{D}_0, \: x = \bar{x}(y,\eps) \right\},
\end{align*}
where $\bar{x}(y, \eps) = x^*(y) + \Onot(\eps)$ due to Theorem \ref{TihonovThm} (Fenichel-Tikhonov). (Again, this $\Onot(\eps)$ is uniform in $y$.) 
We linearize the fast variable around $C_\eps$.
For a solution $(x,y)$ of $\eqref{1DSysinSlowTimeComplDyn}$ set $\xi_t = x_t - \bar{x}(y_t, \eps)$, then $(\xi,y)$ satisfies the equation
\begin{equation}
\begin{aligned}
\diff \xi_t &= \frac{1}{\eps} \left[ f(\xi_t + \bar{x}(y_t, \eps), y_t, \eps) - f(\bar{x}(y_t, \eps), y_t,\eps) \right] \diff t + \frac{\sigma}{\eps^H} F(y_t)  \diff W^H_t \\
&= \frac{1}{\eps} [a(y_t,\eps)\xi_t + b(\xi_t,y_t,\eps)] \diff t + \frac{\sigma}{\eps^H} F(y_t) \diff W^H_t, \\
\diff y_t &= g(y_t,\eps) \diff t,
\end{aligned}
\end{equation}
where due to Taylor's remainder theorem
\begin{align*}
a(y,\eps) &= \partial_x f(\bar{x}(y, \eps), y,\eps) = \partial_x f(x^*(y), y, 0) + \Onot(\eps), \\
\left| b(x,y,\eps) \right| &\leq M \left| x \right|^2.
\end{align*}
Now we can proceed as in the case for trivial slow dynamics by first considering solutions starting on the slow manifold, i.e. $(\xi_0, y_0) = (0, y_0), \:y_0 \in \mathcal{D}_0$, and using the terms
\begin{align*}
\tilde{a}(t) &:= a(y_t, 0), \\
\tilde{F}(t) &:= F(y_t).
\end{align*}
This way we obtain the same qualitative bound (also for the nonlinear case) as before, which also coincides with intuition, as more involved dynamics on the slow manifold should not influence the attracting behavior of it.
\subsubsection*{Fully Coupled Dynamics}
Now that we have seen the idea how to generalize to more complicated dynamics we give an exposition of the more general case, where the slow variables may even be random, particularly be perturbed by fBms with different Hurst parameters $H_1, H_2 \in (\frac{1}{2},1)$. We consider the following system
\begin{equation}\label{eqFullyCoupledSys1D}
\begin{aligned}
\diff x_t &= \frac{1}{\eps} f(x_t, y_t, \eps) \diff t + \frac{\sigma_1}{\eps^{H_1}} \diff W_t^{H_1} \\
\diff y_t &= g(x_t,y_t, \eps) \diff t + \sigma_2 \diff W_t^{H_2},
\end{aligned}
\end{equation}
which will turn out to be interesting in applications, see Section~\ref{example}.
The following assumptions will suffice to obtain a qualitatively similar result to the one-dimensional case, analyzed in Section~\ref{ch_1Dcase}.
\begin{ass}\label{assStableFullyCoupled}
Stable Case, fully Coupled System
\begin{enumerate}
\item \emph{Regularity:} The functions $f \in \C^2(\mathcal{D} \times [0, \infty);\R)$ and $g \in \C^2(\mathcal{D}\times [0, \infty);\R)$ as well as their derivatives up to order $2$ are uniformly bounded on an open subset $\mathcal{D} \subset \R^2$.
% in $\R$
\item \emph{Critical manifold:} There is an $x^*: \mathcal{D}_0 \rightarrow \R$ for $\mathcal{D}_0 \subset \pi_2(\mathcal{D})$ open such that
\begin{align*}
C_0 = \{ (x,y) \in \mathcal{D}: ~ y \in \mathcal{D}_0, \: x = x^*(y)\}
\end{align*}
is a critical manifold of the system \eqref{eqFullyCoupledSys1D}. Here $\pi_2$ is the projection onto the second coordinate.
\item \emph{Stability:} For $a(y) := \partial_x f(x^*(y),y, 0)$ there is $a > 0$ such that
\begin{align*}
a(y) \leq -a
\end{align*}
for all $y \in \mathcal{D}_0$.
\item \emph{Global existence:} The solutions $(x,y)$ of \eqref{eqFullyCoupledSys1D} are defined for all $t \in [0, \infty)$.
\end{enumerate}
\end{ass}
\begin{rem}
Note that the theory discussed in Section~\ref{int:theory} does not provide the technical details regarding the existence and uniqueness of solutions for coupled systems driven by fractional Brownian motion. However, this can be extended to systems of the form~\eqref{eqFullyCoupledSys1D}. We refer to \cite[Theorem~3.3]{SelAspOfFBM} for further details.
\end{rem}
Similarly as before the system~\eqref{eqFullyCoupledSys1D} has an attracting slow manifold given by
\begin{align*}
C_{\eps} = \left\{ (x,y) \in \mathcal{D}: ~ y \in \mathcal{D}_0, \: x = \bar{x}(y,\eps) \right\},
\end{align*}
where $\bar{x}(y, \eps) = x^*(y) + \Onot(\eps)$ due to Theorem \ref{TihonovThm}, where again the $\Onot(\eps)$-term is uniform in $y$.
The strategy to establish sample paths estimates for \eqref{eqFullyCoupledSys1D} is to successively linearize both fast and slow variables around their deterministic counterpart (i.e.~the solution for $\sigma_1 = \sigma_2 = 0$) denoted by $y_t^{\det}$, $\bar{x}(y_t^{\det},\eps)$. The deviations are then described by
\begin{equation}
\begin{aligned}
\xi_t &= x_t - \bar{x}(y_t^{\det} + \eta_t,\eps), \\
\eta_t &= y_t - y_t^{\det}.
\end{aligned}
\end{equation}
They satisfy the following SDE, whose form is obtained by successively applying Taylor's theorem ($\mathrm{int.}$ always stands for the appropriate intermediate value)
\begin{align*}
\diff \xi_t &= \frac{1}{\eps} \left[ f(\xi_t + \bar{x}(y_t, \eps), y_t, \eps) - f(\bar{x}(y_t, \eps), y_t,\eps) \right] \diff t + \frac{\sigma_1}{\eps^{H_1}} \diff W_t^{H_1} \\
&= \frac{1}{\eps} \left[ \partial_x f (\bar{x}(y_t, \eps),y_t,\eps) \xi_t + \partial_{xx} f(\mathrm{int.},y_t, \eps) \xi_t^2 \right] + \frac{\sigma_1}{\eps^{H_1}} \diff W_t^{H_1} \\
&= \frac{1}{\eps} \left[ \tilde{a}(t,\eps) \xi_t + \tilde{b}(\xi_t, \eta_t,t,\eps) \right] + \frac{\sigma_1}{\eps^{H_1}} \diff W_t^{H_1},
\end{align*}
where
\begin{align*}
\tilde{a}(t,\eps) &= \partial_x f (\bar{x}(y_t^{\det}, \eps),y_t^{\det},\eps) = a(t) + \Onot(\eps), \\
b(\xi_t, \eta_t,t,\eps) &= \partial_{xx} f(\mathrm{int.},y_t, \eps) \xi_t^2 + \left[ \partial_{xx} f(\bar{x}(\mathrm{int.},\eps),\mathrm{int.},\eps) \partial_y \bar{x}(\mathrm{int.},\eps) + \partial_y f(\bar{x}(\mathrm{int.},\eps),\mathrm{int.},\eps) \right] \xi_t \eta_t,
\end{align*}
so that $\left| b(x, y,t,\eps) \right| \leq M_1 (x^2+|xy|),$ where $M_1$ is uniform in the variables due to the uniform boundedness assumption.
\begin{align*}
\diff \eta_t &= \left[ g(x_t, y_t, \eps) - g(\bar{x}(y_t, \eps), y_t,\eps) + g(\bar{x}(y_t, \eps), y_t,\eps)- g(\bar{x}(y_t^{\det}, \eps), y_t^{\det},\eps) \right] \diff t + \sigma_2 \diff W_t^{H_2} \\
&= \Big[ \partial_x g(\bar{x}(y_t, \eps), y_t,\eps) \xi_t + \partial_{xx} g(\mathrm{int.}, y_t,\eps)\xi_t^2 \\
&+ \left( \partial_x g(\bar{x}(y_t^{\det}, \eps), y_t^{\det},\eps) \partial_y \bar{x}(y_t^{\det}, \eps) + \partial_y g(\bar{x}(y_t^{\det}, \eps), y_t^{\det},\eps)\right) \eta_t + \tilde{R}(y_t,y_t^{\det},\eps) \eta_t^2 \Big] \diff t + \sigma_2 \diff W_t^{H_2}
 \\
&= \left[ c(y^{\det}, \eps) \xi_t  + d(y^{\det}, \eps) \eta_t + R(\xi_t,\eta_t,\eps)\right] \diff t + \sigma_2 \diff W_t^{H_2},
\end{align*}
where
\begin{align*}
c(y^{\det}, \eps) &= \partial_x g(\bar{x}(y_t^{\det}, \eps), y_t^{\det},\eps), \\
d(y^{\det}, \eps) &= \partial_x g(\bar{x}(y_t^{\det}, \eps), y_t^{\det},\eps) \partial_y \bar{x}(y_t^{\det}, \eps) + \partial_y g(\bar{x}(y_t^{\det}, \eps), y_t^{\det},\eps), \\
R(\xi_t,\eta_t,t,\eps) &= \partial_{xx} g(\mathrm{int.}, y_t,\eps)\xi_t^2 + \tilde{R}(y_t,y_t^{\det},\eps) \eta_t^2 + \frac{\diff}{\diff y}c(\mathrm{int.}, \eps) \xi_t \eta_t.
\end{align*}
In particular the nonlinearity term satisfies for some $M_2 \geq 0$ (again uniform in the variables)
\begin{align*}
|R(x,y,t,\eps)| \leq M_2 (x^2+y^2+|xy|).
\end{align*}
In order to prove that $x_t$ is concentrated  in the neighborhood $\mathcal{B}(h) := \{ (x,y) \in \R^2: \left| x \right| < h \sqrt{\zeta(y)}\}$ around the slow manifold $C_{\eps}$ with high probability define the exit times
\begin{align*}
\tau_{\mathcal{B}(h)} &:= \inf \{ r > 0: (\xi_r,y_r^{\det}) \notin \mathcal{B}(h)\},\\
\tau_{\eta,\tilde{h}} &:= \inf \{ r > 0: |\eta_r| > \tilde{h} \}.
\end{align*}
Then we partition the event of $x_t$ in the following way
\begin{align*}
\p \left( \tau_{\mathcal{B}(h)} < t\right) 
&= \p \left( \tau_{\mathcal{B}(h)} < t, \tau_{\eta,\tilde{h}} > \tau_{\mathcal{B}(h)}\right) + \p \left( \tau_{\mathcal{B}(h)} < t, \tau_{\eta,\tilde{h}} \leq \tau_{\mathcal{B}(h)}\right) \\
&\leq \p \left( \tau_{\mathcal{B}(h)} < t \wedge \tau_{\eta,\tilde{h}}\right) + \p \left( \tau_{\eta,\tilde{h}} \leq t \wedge \tau_{\mathcal{B}(h)} \right).
\end{align*}
Note that the first probability is of the form
\begin{align*}
&\p \left( \tau_{\mathcal{B}(h)} < t \wedge \tau_{\eta,\tilde{h}}\right) =\p \left( \sup_{0 \leq r < t \wedge \tau_{\eta,\tilde{h}}} \frac{\left| \xi_r \right|}{\sqrt{\zeta(y_r^{\det})}} \geq h\right).
\end{align*}
By the same technique used to prove Theorem \ref{thmNonlinearCase} we get
\begin{align*}
\p \left( \sup_{0 \leq r < t} \frac{\left| \xi_r \right|}{\sqrt{\zeta(y_r^{\det})}} \geq h \right)
\leq 2e \left\lceil \frac{\left| \alpha(t) \right|}{\eps} \kappa^2 \frac{h^2}{\sigma^2} (1 + \Onot (\eps))\right\rceil\exp\left( -\kappa^2\frac{h^2}{2\sigma^2} \right),
\end{align*}
which is valid as long as $h \zeta^+ + \sqrt{\zeta^-} \tilde{h} \leq \frac{a \sqrt{\zeta^-}}{2M_1}.$ It remains to estimate $\p \left( \tau_{\eta,\tilde{h}} \leq t \wedge \tau_{\mathcal{B}(h)} \right)$. This issue is however tightly linked to investigating the behavior of non-stable or even non-hyperbolic dynamics under fractional noise because we have no additional assumptions on the slow dynamics. In the event $\{\tau_{\eta,\tilde{h}} \leq t\}$ we conjecture that a reduction to the slow variables should be possible. The reduced equation is then given by
\begin{align*}
\diff y_t &= g(x(t,\eps),y_t, \eps) \diff t + \sigma_2 \diff W_t^{H_2},
\end{align*}
which will be illustrated by the simulations presented at the end of Section~\ref{example}.

%%%%%%%%%%%%%%%%%%%%%%%%%%%%%%%%%%%%%%%%%%%%%%%%%%%%%%%%%%%%%%%%%%%%%%%%%%%%%%%%%%%%%%%%%%%%%%%%%%%%%%%%%%%%%%%%%%%%%%%%%%%%%%%%%%%%%
\subsection{Example}
\label{example}

We consider the climate model analyzed in \cite[Section~6.2.1]{NoiseSlowFastSys}. It is a simple model describing the difference of temperature $\Delta T = T_1 - T_2$ and salinity $\Delta S = S_1 - S_2$ between low latitude ($T_1,$ $S_1$) and high latitude ($T_2,$ $S_2$) by a system of coupled differential equations
\begin{align*}
\frac{\diff}{\diff s} \Delta T &= - \frac{1}{\tau_r} (\Delta T - \theta) - \left( \frac{1}{\tau_d} + \frac{q}{V}(\alpha_S \Delta S - \alpha_T \Delta T)^2 \right) \Delta T, \\
\frac{\diff}{\diff s} \Delta S &= \frac{F}{H} S_0 - \left( \frac{1}{\tau_d} + \frac{q}{V}(\alpha_S \Delta S - \alpha_T \Delta T)^2 \right) \Delta S.
\end{align*}
Here $\tau_r$ stands for the relaxation time of $\Delta T$ to its reference value $\theta$, $F$ is the freshwater flux, $H$ the depth of the ocean, $S_0$ a reference salinity. Furthermore, $\tau_d$ is the diffusion timescale, $q$ the Poiseuille transport coefficient and $V$ the volume of the box the system is contained in. The influence of external sources, internal fluctuations, and/or microscopic effects can be incorporated into the model via noise terms. For example, daily weather variations certainly influence the temperature $T$ and salinity $S$. Yet, a precise/detailed modelling of these terms would be far too expensive computationally and would make the model intractable analytically. We know, as discussed in the introduction to this work, that using white noise generally does not represent temperature fluctuations correctly but these are usually positively correlated. Since we have no further basic knowledge of the stochastic process it is quite natural to start by considering fBm with Hurst index $H>1/2$. This allows us to model Gaussianity and positive correlations in time. After transforming our model into dimensionless variables $x = \Delta T / \theta$, $y = \alpha_S \Delta S / (\alpha_T \theta)$, rescaling time by $\tau_d$ and taking into consideration fractional noise with Hurst parameter $H > \frac{1}{2}$, this yields the system
\begin{equation}\label{eq_climateModel}
\begin{aligned}
\diff x_t &= \frac{1}{\eps} \left[ -(x_t-1) - \eps x_t (1+ \eta^2(x_t - y_t)^2) \right] \diff t + \frac{\sigma_1}{\eps^H} \diff W_t^H, \\
\diff y_t &= \left[ \mu - y_t (1 + \eta^2(x_t - y_t)^2)\right] \diff t + \sigma_2 \diff W_t^H,
\end{aligned}
\end{equation}
where $\eps = \tau_r/\tau_d$, $\eta^2 = \tau_d(\alpha_T \theta)^2 q / V$, and $\mu = F \cdot \alpha_S S_0 \tau_d / (\alpha_T \theta H)$.
Note that the previous system is of the form~\eqref{eqFullyCoupledSys1D}. We consider the solution on a bounded time interval $[0,T]$, $T >0$ to ensure the uniform boundedness of the corresponding functions, as imposed in Assumption~\ref{assStableFullyCoupled}.\\
The slow subsystem of the deterministic system is given by
\begin{align*}
0 &= -(x_t-1), \\
\frac{\diff}{\diff t} y_t = \dot{y}_t &= \mu - y_t (1 + \eta^2(x_t - y_t)^2).
\end{align*}
In particular, it has a normally hyperbolic critical manifold, namely
\begin{align*}
C_0 = \{ (x,y) \in \R^2: \: x = 1\},
\end{align*}
which is even stable, as $\frac{\diff}{\diff x} \left( -(x-1)\right) = -1.$ By Theorem~\ref{TihonovThm} there exists an invariant manifold
\begin{align*}
C_\eps = \{ (x,y) \in \R^2: \: x = \bar{x}(y,\eps) := 1 + \Onot(\eps)\}.
\end{align*}
In order to apply the estimate from Theorem \ref{thmvariantbernsteinineq} note that $f(x,y,\eps) = -(x_t-1) - \eps x_t (1+ \eta^2(x_t - y_t)^2)$, so that
\begin{align*}
\partial_x f(x,y,\eps) = -1 - \eps(1 + \eta^2(x-y)^2) - 2\eps\eta^2(x-y).
\end{align*}
Hence we have
\begin{align*}
a(y_t,\eps) = \partial_x f(\bar{x}(y_t,\eps), y_t, \eps) = -1 + \Onot(\eps).
\end{align*}
By Proposition \ref{fastSlowVariance} there is an attracting slow manifold for the variance of the form
\begin{align*}
\zeta(t) = H \Gamma(2H) + \Onot(\eps).
\end{align*}
We conclude that, in the case that $y_t$ is deterministic, sample paths starting on the slow manifold $\bar{x}(y,\eps)$ are concentrated in the set
\begin{align*}
\mathcal{B}(h) = \{ (x,y) \in \R^2: \: |x - 1 - \Onot(\eps)| < h (H\Gamma(2H) + \Onot(\eps)) \},
\end{align*}
or, more precisely, for $0 < t \leq T$ and initial data $(x_0, y_0) = (\bar{x}(y_0,\eps),y_0)$
\begin{align*}
\p \left( \exists \:0 \leq r \leq t: \: (x_r,y_r) \notin \mathcal{B}(h) \right) 
\leq 2e \left\lceil \frac{t}{\eps} \kappa \frac{h^2}{\sigma^2} (1 + \Onot (\eps))\right\rceil\exp\left( -\kappa\frac{h^2}{2\sigma^2} \right).
\end{align*}
Figure \ref{fig_SimClimate} indicates that for small enough noise the dynamics around the slow manifold should be governed by the equation
\begin{align}\label{model}
\frac{\diff}{\diff t} y_t = \dot{y}_t &= \mu - y_t (1 + \eta^2(1 - y_t)^2) + \sigma_2 \diff W_t^H.
\end{align}

\begin{figure}
\centering
\begin{overpic}[scale = 0.45, ,tics=10]{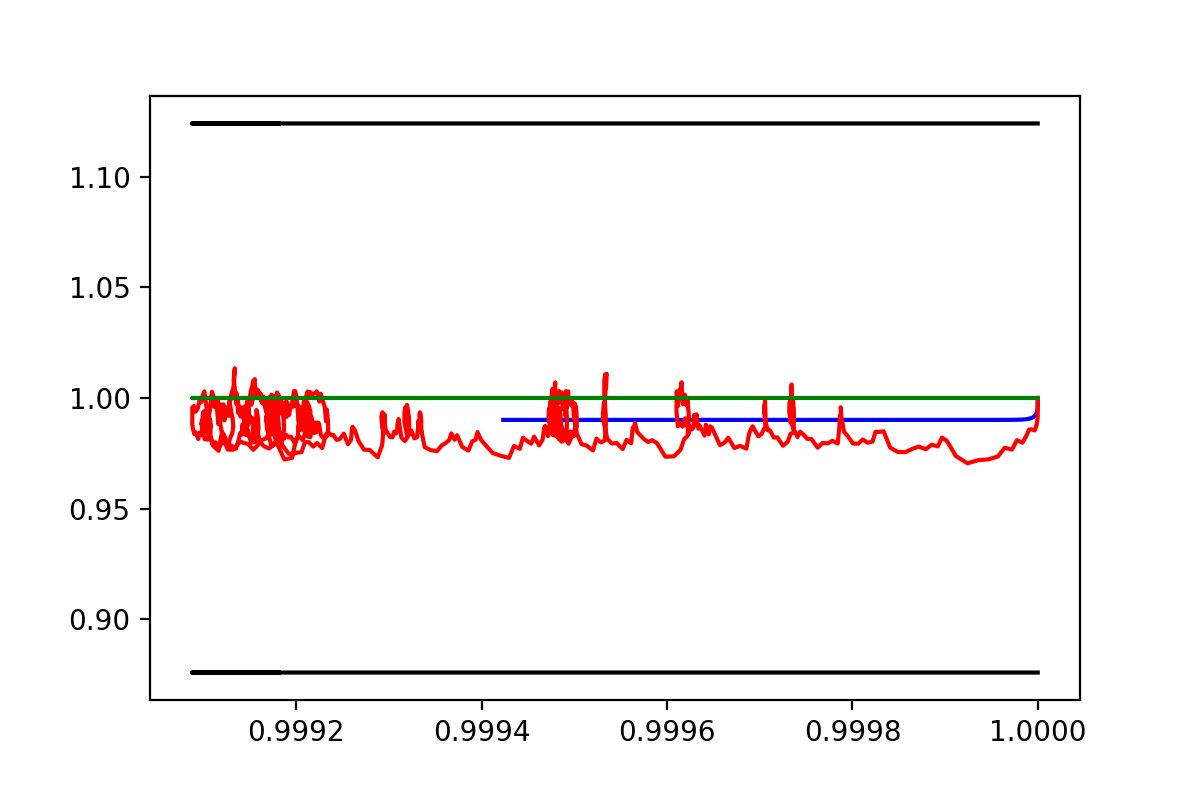}
\put(63,-1){\footnotesize slow variable}
\put(-1,33){\rotatebox{90}{\footnotesize fast variable}}
\put(12,-7){Dynamics for $\sigma_1 = 0.01$}
\end{overpic} \qquad
\begin{overpic}[scale = 0.45, ,tics=10]{pictures/Simulation/v1_climate_original//climate_timeseries_0,7_sigma_0,01_epsilon_0,01_h_0,2.png}
\put(80,-1){\footnotesize time}
\put(-1,33){\rotatebox{90}{\footnotesize fast variable}}
\put(12,-7){Time series for $\sigma_1 = 0.01$}
\end{overpic} \\
\vspace{1cm}
\begin{overpic}[scale = 0.45, ,tics=10]{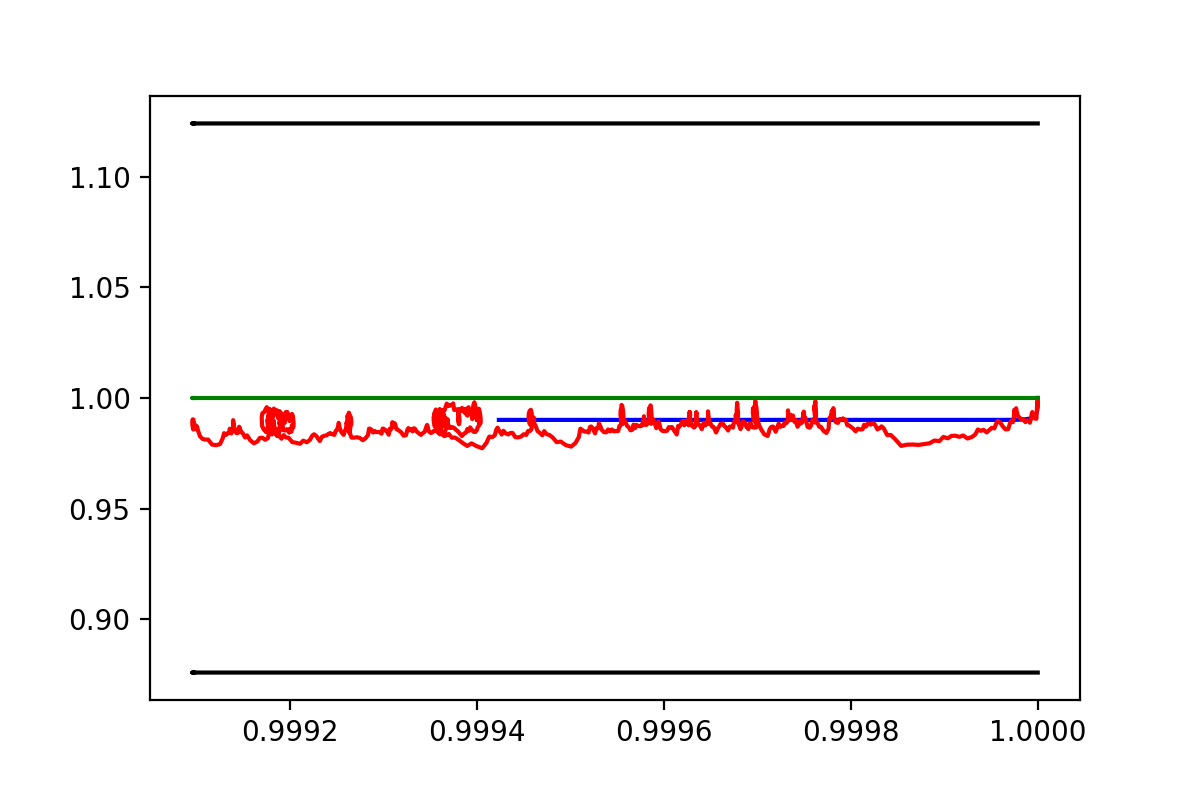}
\put(63,-1){\footnotesize slow variable}
\put(-1,33){\rotatebox{90}{\footnotesize fast variable}}
\put(12,-7){Dynamics for $\sigma_1 = 0.005$}
\end{overpic} \qquad
\begin{overpic}[scale = 0.45, ,tics=10]{pictures/Simulation/v1_climate_original//climate_timeseries_0,7_sigma_0,005_epsilon_0,01_h_0,2.png}
\put(80,-1){\footnotesize time}
\put(-1,33){\rotatebox{90}{\footnotesize fast variable}}
\put(12,-7){Time series for $\sigma_1 = 0.005$}
\end{overpic} \\
\vspace{1cm}
\begin{overpic}[scale = 0.45, ,tics=10]{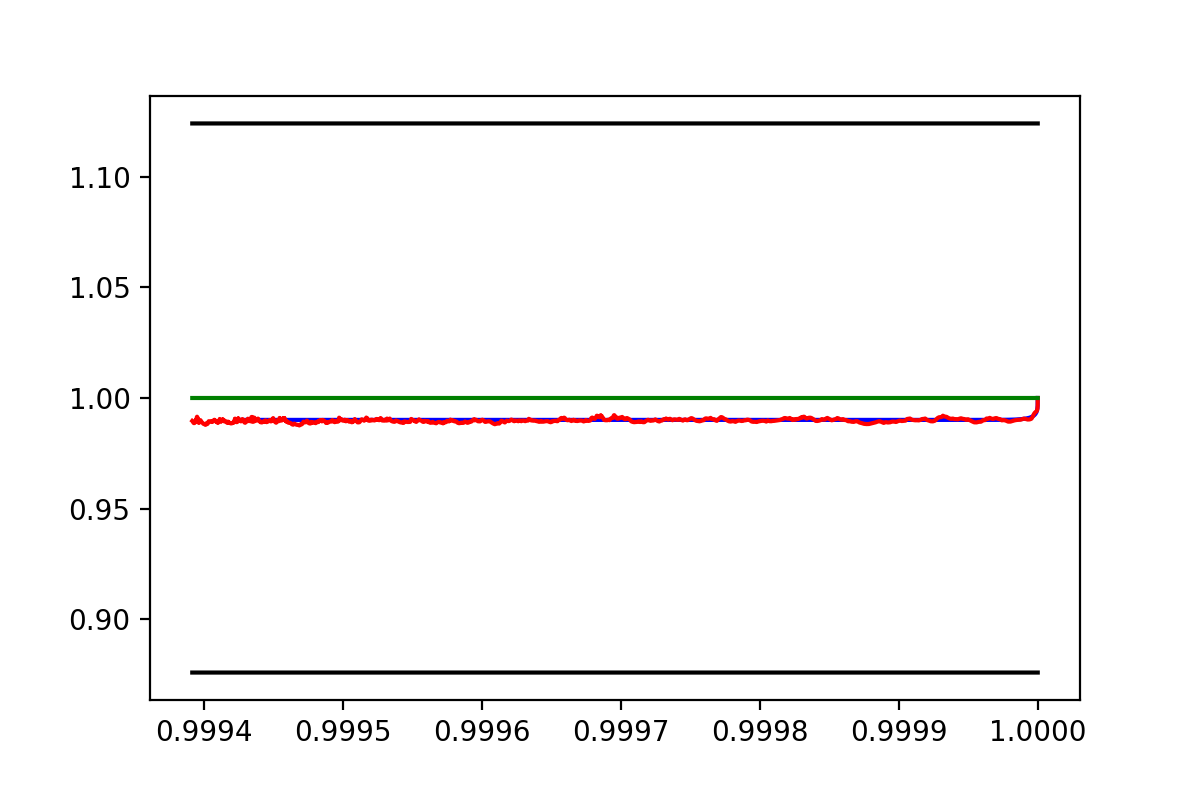}
\put(63,-1){\footnotesize slow variable}
\put(-1,33){\rotatebox{90}{\footnotesize fast variable}}
\put(12,-7){Dynamics for $\sigma_1 = 0.001$}
\end{overpic} \qquad
\begin{overpic}[scale = 0.45, ,tics=10]{pictures/Simulation/v1_climate_original//climate_timeseries_0,7_sigma_0,001_epsilon_0,01_h_0,2.png}
\put(80,-1){\footnotesize time}
\put(-1,33){\rotatebox{90}{\footnotesize fast variable}}
\put(12,-7){Time series for $\sigma_1 = 0.001$}
\end{overpic}
\vspace{0.5cm}
\caption{\label{fig_SimClimate}Equation \eqref{eq_climateModel} simulated for Hurst parameter $H=0.7$, $\eps = 0.01$, $\sigma_2 = 0$ and different $\sigma_1.$ The stochastic solution is displayed red, the deterministic one is blue, the critical manifold is in green and the neighborhood $\mathcal{B}(h)$ for $h = 0.2$ is in black.}
\end{figure}
In~\eqref{model} $\eta^2$ is a fixed parameter, while $\mu$ is proportional to the freshwater flux. It can be hence treated as a slowly varying parameter compared to the rescaled salinity. By setting $X := y$ and $Y := \mu$ we obtain another fast slow system subject to some noise
\begin{equation}\label{eq_redDyn}
\begin{aligned}
\frac{\diff}{\diff t} X_t = \dot{X}_t &= Y_t - X_t (1 + \eta^2(1 - X_t)^2) + \sigma_2(t) \diff W_t^H, \\
\frac{\diff}{\diff t} Y_t = \dot{Y}_t &= \eps g(X_t, Y_t),
\end{aligned}
\end{equation}
where $g \in \C^2(\R^2;\R).$
In particular, we can apply our theory again on the two stable branches of the new slow manifold. We simulate now the reduced equation, similarly to \cite[Section 7.1]{KuehnCT2}, where the same system was considered with respect to the Brownian motion. The results for two different Hurst parameters (i.e.~$H=0.6$ and $H=0.8$) are illustrated in Figure~\ref{fig_H06} and~\ref{fig_H08}. 

\begin{figure}
\centering
\begin{overpic}[scale = 0.4, ,tics=10]{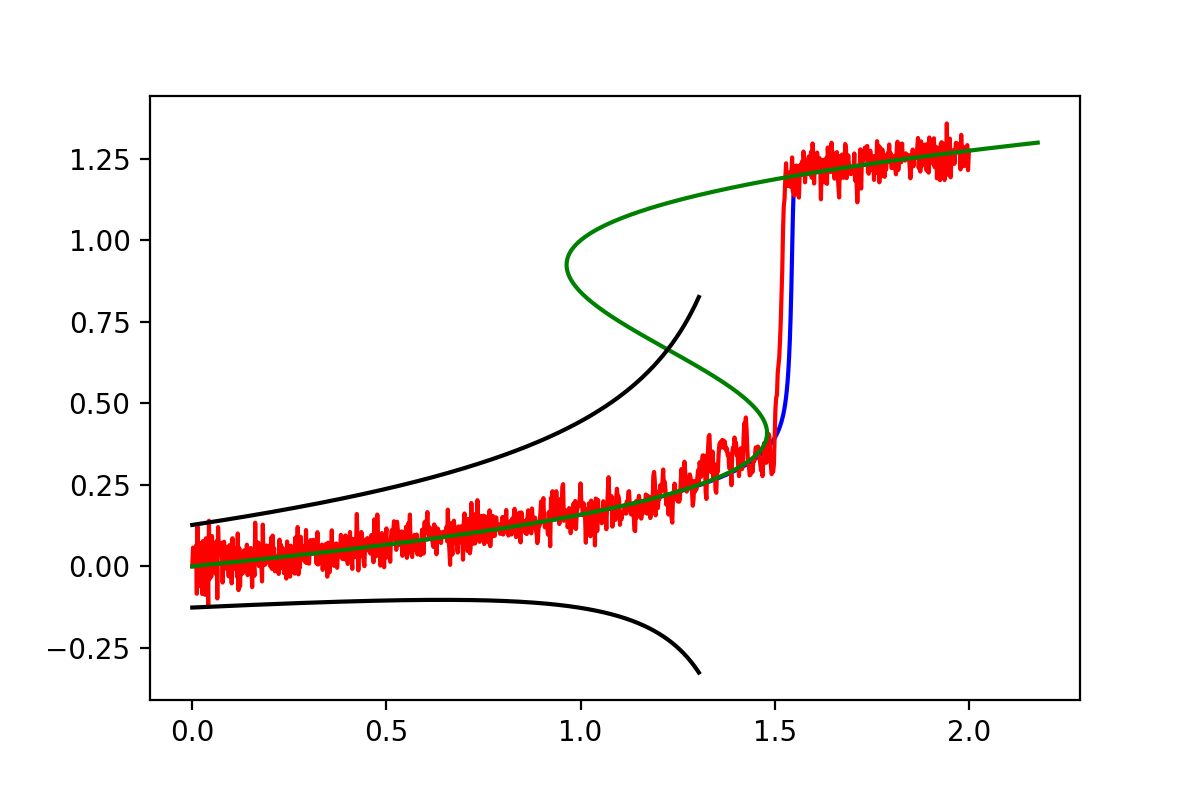}
\put(60,-1){\footnotesize slow variable}
\put(-1,30){\rotatebox{90}{\footnotesize fast variable}}
\put(12,-7){$\sigma_2 = 0.1$, $h=3$}
\end{overpic} \qquad
\begin{overpic}[scale = 0.4, ,tics=10]{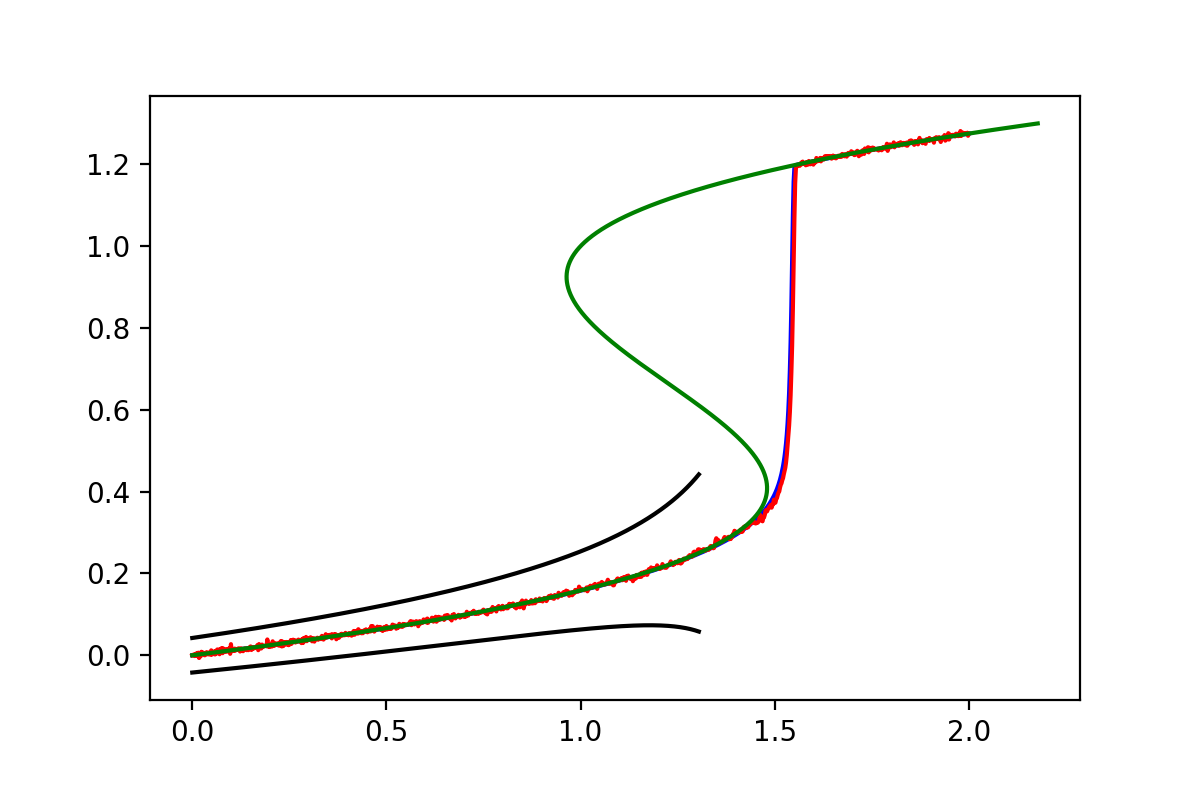}
\put(60,-1){\footnotesize slow variable}
\put(-1,30){\rotatebox{90}{\footnotesize fast variable}}
\put(12,-7){$\sigma_2 = 0.01$, $h=1$}
\end{overpic} \\
\vspace{1cm}
\begin{overpic}[scale = 0.4, ,tics=10]{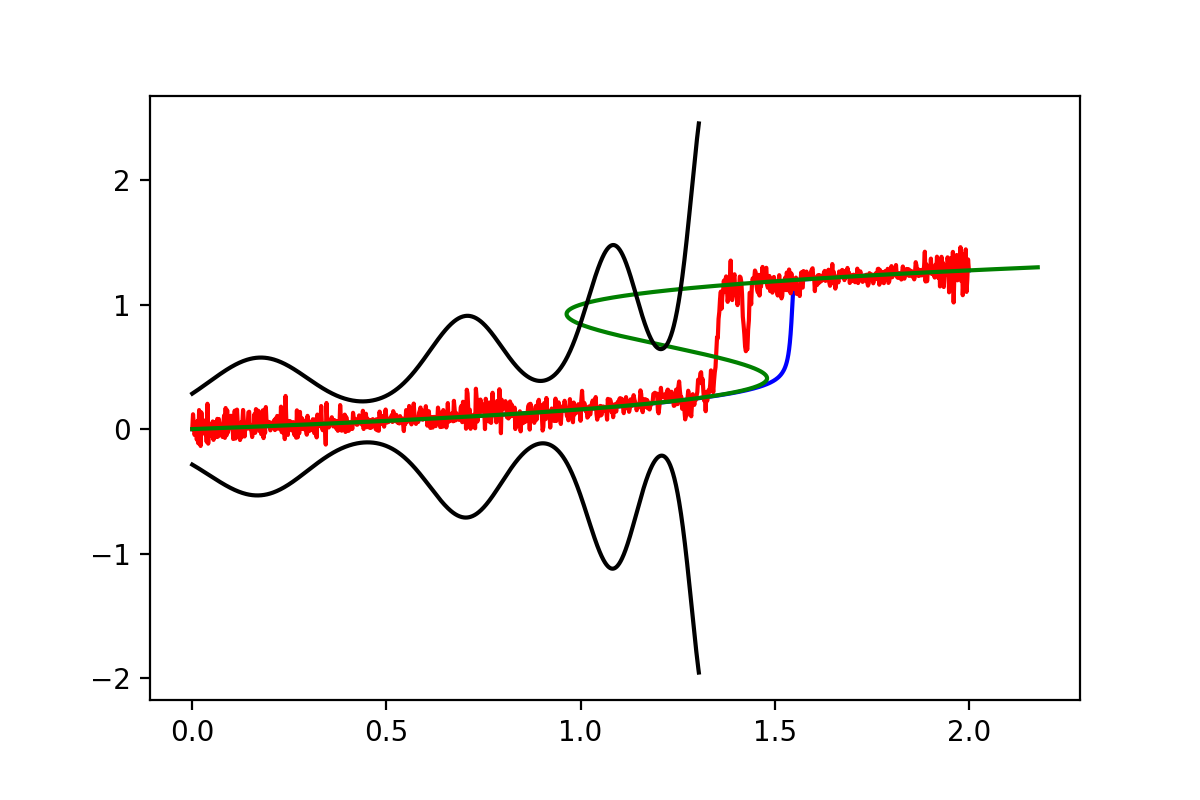}
\put(60,-1){\footnotesize slow variable}
\put(-1,30){\rotatebox{90}{\footnotesize fast variable}}
\put(10,-16){\parbox{4.5cm}{\raggedright Periodic noise with \hspace{0.5cm} $\sigma_2(t) = 0.05\sin(10t)+0.15$, $h=3$}}
\end{overpic} \qquad
\begin{overpic}[scale = 0.4, ,tics=10]{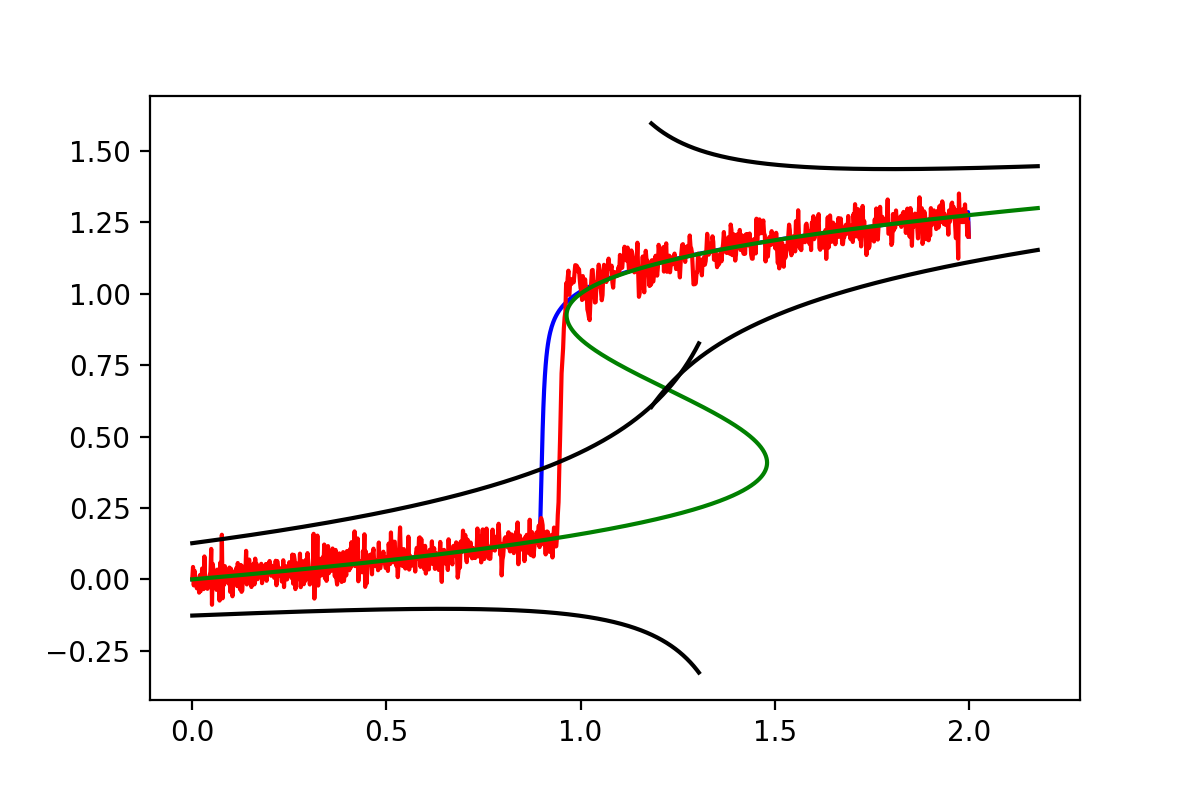}
\put(60,-1){\footnotesize slow variable}
\put(-1,30){\rotatebox{90}{\footnotesize fast variable}}
\put(10,-24){\parbox{5.5cm}{Process starting on $(1.2,2)$ and moving backward in time.\newline We also give a neighborhood for the second stable branch; $\sigma_2 = 0.1$, $h=3$}}
\end{overpic}
\vspace{2.3cm}
\caption{\label{fig_H06}Equation~\eqref{eq_redDyn} simulated for Hurst parameter $H=0.6$, $\eps = 0.01$. and different noise. The stochastic solution is displayed red, the deterministic one is blue, the critical manifold is in green and the neighborhood $\mathcal{B}(h)$ for varying $h$ is in black.}
\end{figure}
\begin{figure}
\centering
\begin{overpic}[scale = 0.4, ,tics=10]{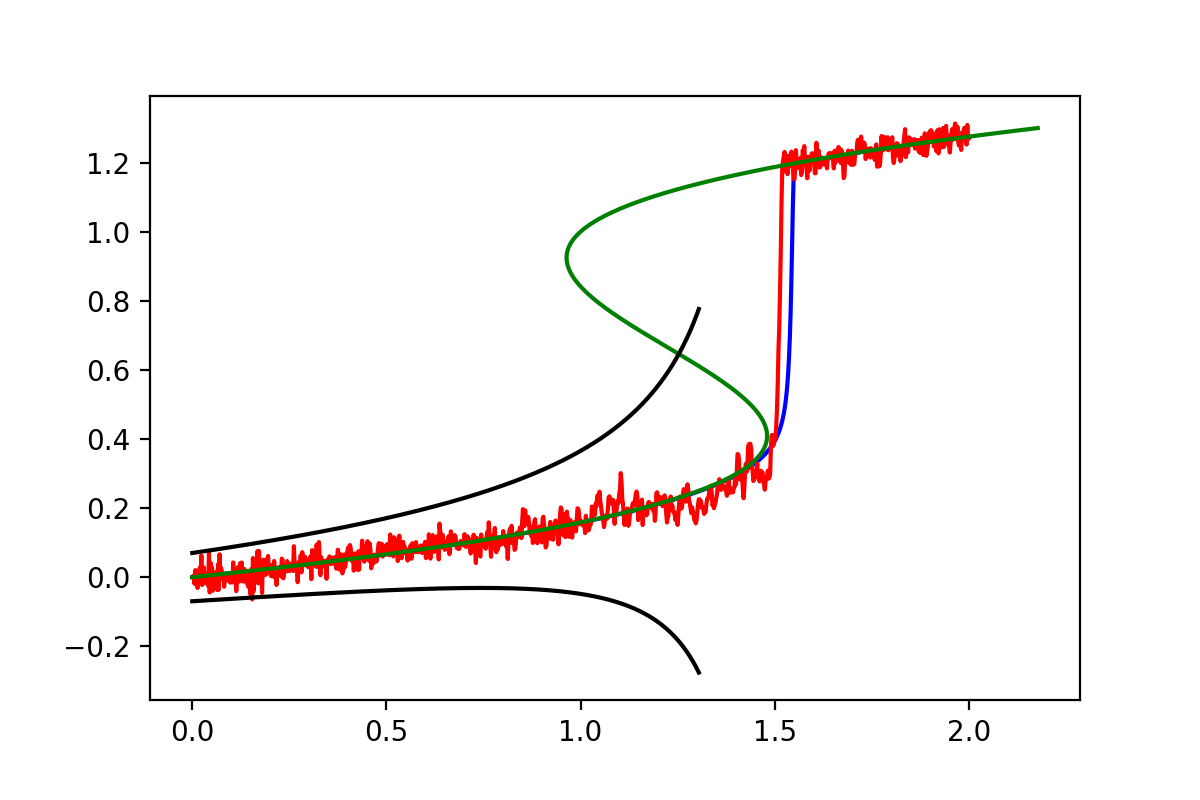}
\put(60,-1){\footnotesize slow variable}
\put(-1,30){\rotatebox{90}{\footnotesize fast variable}}
\put(12,-7){$\sigma_2 = 0.1$, $h=3$}
\end{overpic} \qquad
\begin{overpic}[scale = 0.4, ,tics=10]{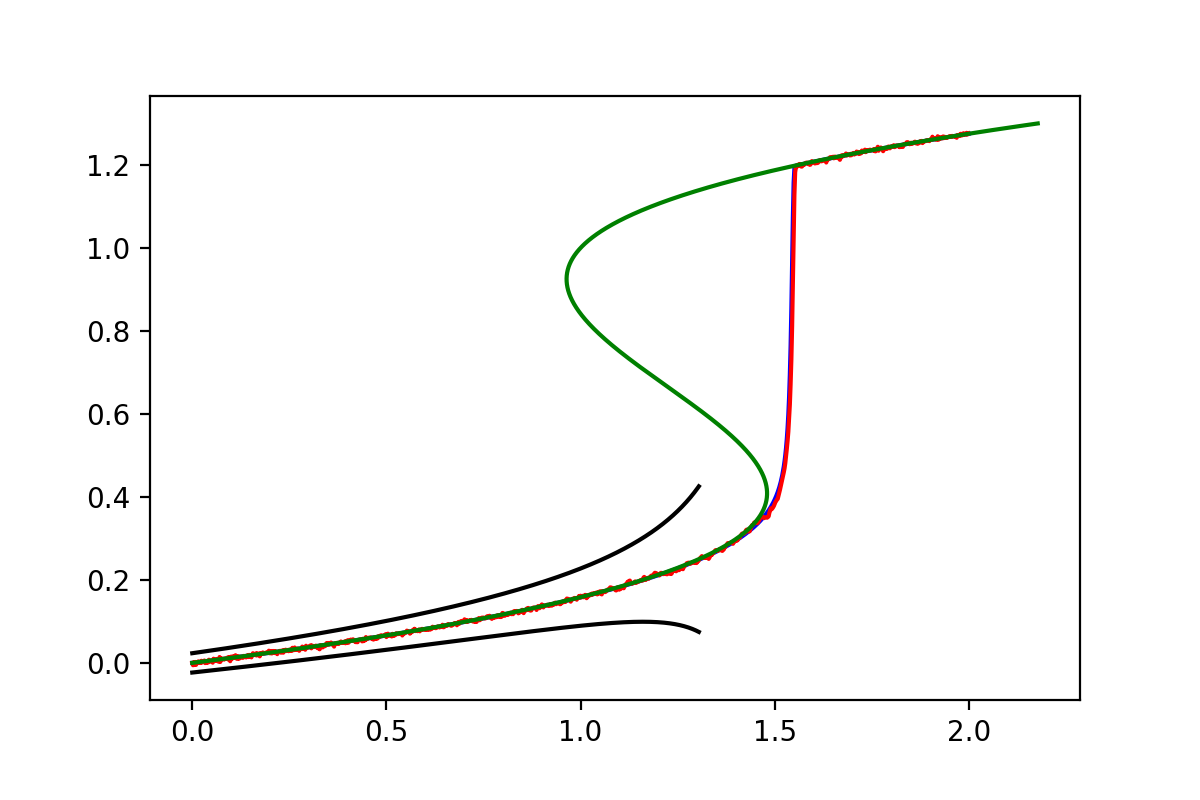}
\put(60,-1){\footnotesize slow variable}
\put(-1,30){\rotatebox{90}{\footnotesize fast variable}}
\put(12,-7){$\sigma_2 = 0.01$, $h=1$}
\end{overpic} \\
\vspace{1cm}
\begin{overpic}[scale = 0.4, ,tics=10]{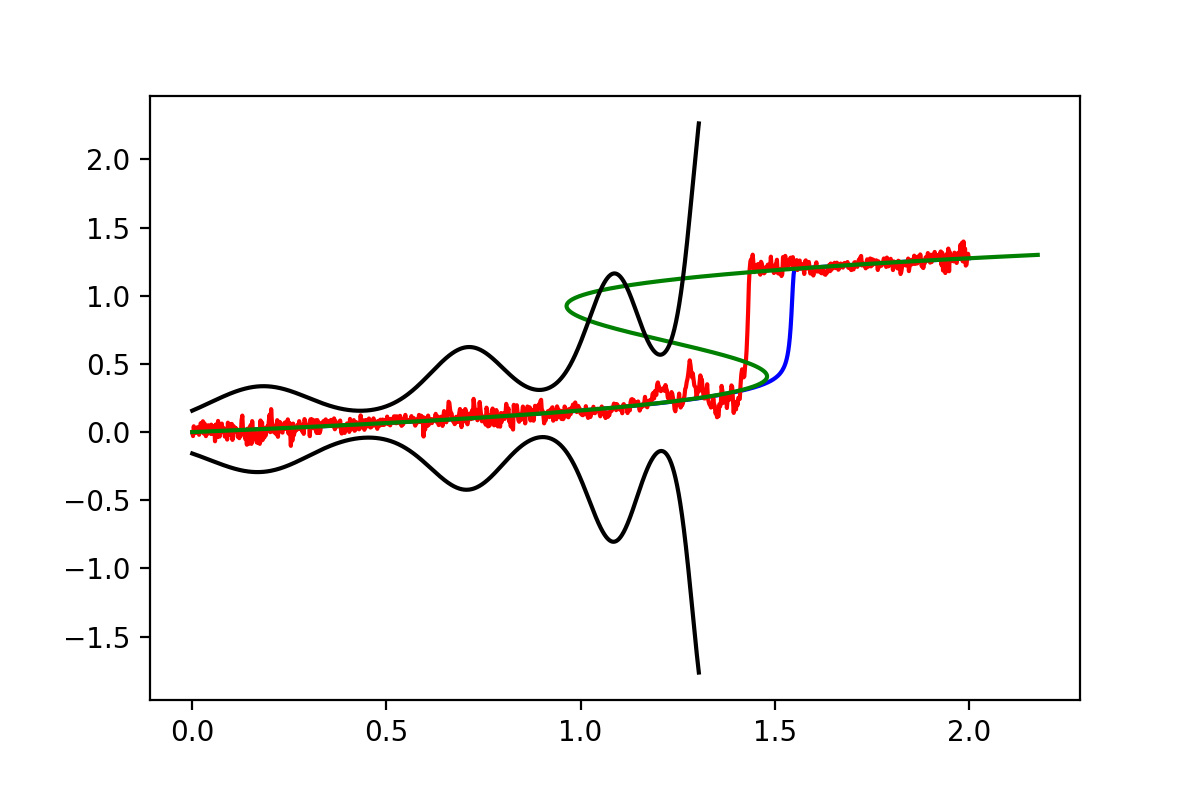}
\put(60,-1){\footnotesize slow variable}
\put(-1,30){\rotatebox{90}{\footnotesize fast variable}}
\put(10,-16){\parbox{4.5cm}{\raggedright Periodic noise with \hspace{0.5cm} $\sigma_2(t) = 0.05\sin(10t)+0.15$, $h=3$}}
\end{overpic} \qquad
\begin{overpic}[scale = 0.4, ,tics=10]{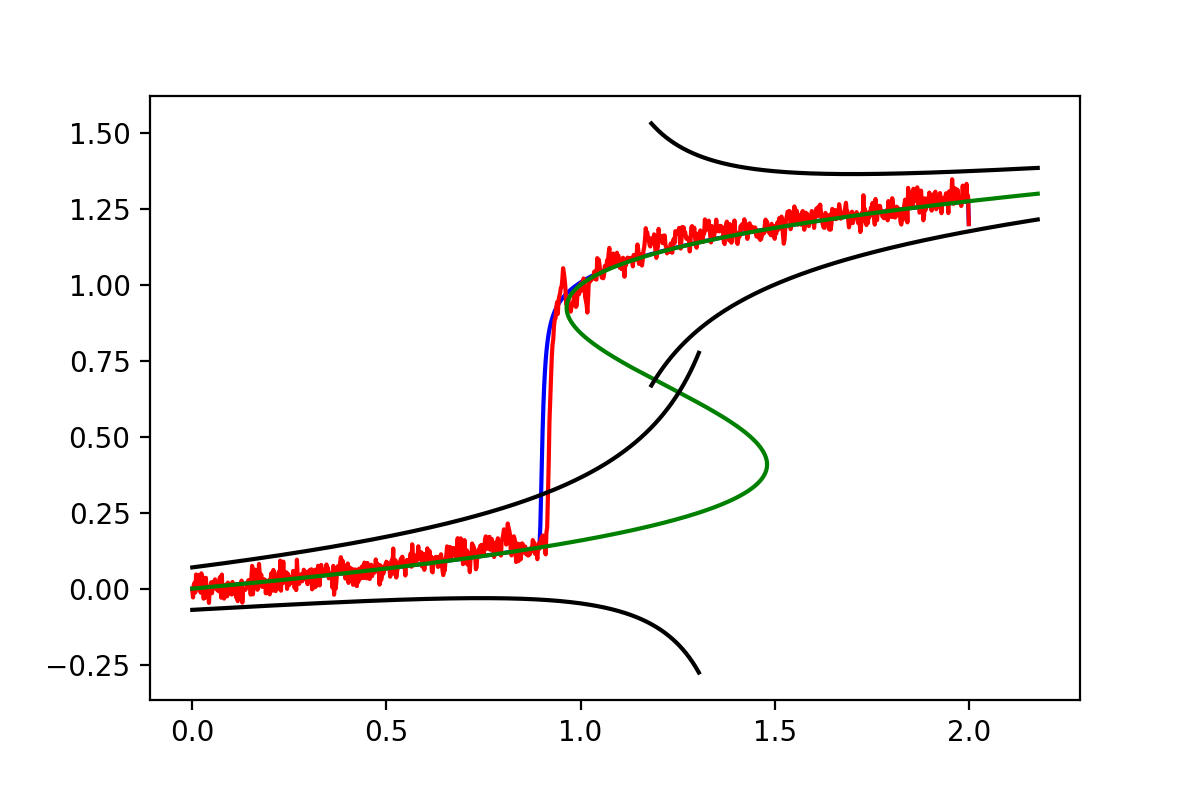}
\put(60,-1){\footnotesize slow variable}
\put(-1,30){\rotatebox{90}{\footnotesize fast variable}}
\put(10,-24){\parbox{5.5cm}{Process starting on $(1.2,2)$ and moving backward in time.\newline We also give a neighborhood for the second stable branch; $\sigma_2 = 0.1$, $h=3$}}
\end{overpic}
\vspace{2.3cm}
\caption{\label{fig_H08}Equation \eqref{eq_redDyn} simulated for Hurst parameter $H=0.8$, $\eps = 0.01$. and different noise intensity. The stochastic solution is displayed red, the deterministic one is blue, the critical manifold is in green and the neighborhood $\mathcal{B}(h)$ for varying $h$ is in black.}
\end{figure}
\newpage

%%%%%%%%%%%%%%%%%%%%%%%%%%%%%%%%%%%%%%%%%%%%%%%%%%%%%%%%%%%%%%%%%%%%%%%%%%%%%%%%%%%%%%%%%%%%%%%%%%%%%%%%%%%%%%%%%%%%%%%%%%%%%%%%
\section{The Multi-Dimensional Case}
\label{ch_MDcase}
In this section, we make the first steps towards extending our theory to the multi-dimensional case. Note that we keep the same notation as for the one-dimensional objects. We start again with uniform slow dynamics and consider the fast-slow system system in slow time
\begin{equation}
\label{eqMDSysinSlowTime}
\begin{aligned}
\diff x_t &= \frac{1}{\eps} f(x_t, y_t,\eps) \diff t + \frac{\sigma}{\eps^H} \diff W^H_t \\
\diff y_t &= 1 \diff t,
\end{aligned}
\end{equation}
under the following assumptions. Let $m\geq 1$.

\begin{ass} \label{assMDStable}
Stable Autonomous Multi-Dimensional Case
\begin{enumerate}
\item \emph{Regularity:} The function $f \in \C^2(\R^m \times [0,\infty)^2;\R)$, as well as its derivatives up to order $2$ are uniformly bounded by a constant $M \geq 0$ on an interval $I = [0, \infty)$ or $I = [0,T]$, $T > 0$.
\item \emph{Critical manifold:} There is an $x^*: [0,\infty) \rightarrow \R^m$ such that
\begin{align*}
f(x^*(t),t,0) = 0
\end{align*}
for all $t \in [0,\infty)$.
\item \emph{Stability:} The critical manifold is asymptotically stable, i.e. the Jacobian matrix 
$$A(t) := \partial_x f(x^*(t),t,0)$$
only contains eigenvalues with negative real part. In addition, its linearization is independent of time, i.e. $A(t) \equiv A.$
\item {\em Noise:} $(W^{H}_{t})_{t\geq 0}$ is an $m$-dimensional fractional Brownian motion.
\end{enumerate}
\end{ass}

These assumptions guarantee the existence and uniqueness of \eqref{eqMDSysinSlowTime} due to Remark~\ref{rem_uniquenesSDEhigherDim}(R2).
Furthermore, recall that under these assumptions there is a slow manifold 
\begin{align*}
C_{\eps} = \left\{ (x,t) \in \R^m \times I: x = \bar{x}(t,\eps) = x^*(t) + \Onot(\eps) \right\},
\end{align*}
due to Fenichel-Tikhonov (Theorem \ref{TihonovThm}). We start again by examining the behavior of the linearized system around $C_{\eps}$. For a solution $(x_t)_{t \in I}$ of $\eqref{eqMDSysinSlowTime}$ set $\xi_t := x_t - \bar{x}(t, \eps)$, then $(\xi_t)_{t \in I}$ satisfies the equation
\begin{equation}
\label{eqMDTaylor}
\begin{aligned}
\diff \xi_t &= \frac{1}{\eps} \left[ f(\xi_t + \bar{x}(t, \eps), t,\eps) - f(\bar{x}(t, \eps), t,\eps)\right] \diff t + \frac{\sigma}{\eps^H} \diff W^H_t \\
&= \frac{1}{\eps} [A(t,\eps)\xi_t + B(\xi_t,t,\eps)] \diff t + \frac{\sigma}{\eps^H}  \diff W^H_t,
\end{aligned}
\end{equation}
where
\begin{align*}
A(t,\eps) &= \partial_x f(\bar{x}(t, \eps), t, \eps) = \partial_x f(x^*(t), t, 0) + \Onot(\eps), \\
\left\| B(x,t,\eps) \right\|_{2} &\leq M \left\| x \right\|_{2},
\end{align*}
with $\left\| \cdot \right\|_{2}$ being the operator norm with respect to the Euclidean norm.
For simplicity, we analyze the linearization with $A$ being the drift term instead of $A + \Onot(\eps)$, i.e.~we consider
\begin{align}\label{eqMDlinearization}
\diff \xi_t = \frac{1}{\eps} A \xi_t \diff t + \frac{\sigma}{\eps^H} \diff W^H_t.
\end{align}
The solution for $\xi_0 = 0$ ($(x_t)_{t \in I}$ starting on the slow manifold $C_{\eps}$) is given by
\begin{align}\label{eqMDsolution}
\xi_t = \frac{\sigma}{\eps^H} \int_0^t e^{A(t-u)} \diff W^H_u.
\end{align}
Its covariance (matrix) can be computed as
\begin{align*}
\sigma^2 \Xi(t) := \cov(\xi_t) = \frac{\sigma^2}{\eps^{2H}} \int_0^t \int_0^t e^{A(t-u)} e^{A^\top(t-v)} H (2H - 1) \left| u - v \right|^{2H-2} \diff u \diff v.
\end{align*}
In the same way as in the one-dimensional case the rescaled covariance $t \mapsto \Xi(t)$ inherits the fast-slow structure.
\begin{prop}\label{propMDfastSlowVariance}
The so-called renormalized covariance $\Xi(t)$ satisfies the fast-slow ODE
\begin{align}\label{eqMDfastSlowVarianceEqn}
\eps\frac{\diff}{\diff t} \Xi(t) = \eps \dot{\Xi}(t) &= A \Xi(t) + \Xi(t) A^\top + \frac{1}{\eps^{2H-1}} \left[ P(t) + P(t)^\top \right],
\end{align}
where 
\begin{align*}
P(t) = H(2H-1) \int_0^t e^{A(t-u)/\eps} (t-u)^{2H-2} \diff u.
\end{align*}
In particular, there is an (even globally) asymptotically stable slow manifold of the system of the form
\begin{align}\label{MDSlowManifVariance}
\bar{X}(t) = \int_0^{\infty} e^{Au}\left( Q(t) + Q(t)^\top \right) e^{A^\top u} \diff u + \Onot(\eps),
\end{align}
where
\begin{align*}
Q(t) = H(2H-1) \int_0^{\infty} e^{Au} u^{2H-2} \diff u.
\end{align*}
\end{prop}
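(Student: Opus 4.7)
The plan is to mirror the one-dimensional argument in Proposition~\ref{fastSlowVariance}, lifted to the matrix-valued setting. First I would differentiate the double integral defining $\Xi(t)$ by means of the Leibniz rule. Each of the two diagonal boundary contributions (from the upper limit equaling $t$ in one of the two integration variables) gives, after evaluating $e^{A\cdot 0}=I$, the terms $P(t)/\eps^{2H}$ and $P(t)^\top/\eps^{2H}$. Differentiating the two matrix exponentials $e^{A(t-u)/\eps}$ and $e^{A^\top(t-v)/\eps}$ under the integral pulls out factors $A/\eps$ on the left and $A^\top/\eps$ on the right, while what remains of the integrand is again $\Xi(t)$. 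Multiplying through by $\eps$ produces the stated ODE~\eqref{eqMDfastSlowVarianceEqn}.

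Next I would pass to the singular limit $\eps \to 0$. Substituting $s = (t-u)/\eps$ rewrites
\begin{align*}
\frac{P(t)}{\eps^{2H-1}} = H(2H-1)\int_0^{t/\eps} e^{As}\, s^{2H-2}\,\diff s,
\end{align*}
which converges to $Q = H(2H-1)\int_0^\infty e^{As}\, s^{2H-2}\,\diff s$ as $\eps\to 0$ for every fixed $t>0$. This limit is well defined because $s^{2H-2}$ is integrable near $0$ (as $H>1/2$) and $e^{As}$ decays exponentially at infinity (since $A$ is Hurwitz by Assumption~\ref{assMDStable}). To invoke Fenichel-Tikhonov I would verify $\C^1$-regularity of the right-hand side at $\eps=0$ exactly as in the scalar case: the remainder $Q - P(t)/\eps^{2H-1}$ is a tail integral from $t/\eps$ to $\infty$, and its derivative in $\eps$ involves $e^{At/\eps}(t/\eps)^{2H-2}$, which vanishes as $\eps\to 0$ because exponential decay dominates the polynomial factor.

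Taking $\eps=0$ in~\eqref{eqMDfastSlowVarianceEqn} then yields the matrix Lyapunov equation
\begin{align*}
0 = A\Xi + \Xi A^\top + (Q + Q^\top).
\end{align*}
Since $A$ is Hurwitz, the Lyapunov operator $\Xi \mapsto A\Xi + \Xi A^\top$ is invertible, and the unique solution is precisely $\Xi^* = \int_0^\infty e^{Au}(Q+Q^\top)e^{A^\top u}\,\diff u$, which defines the critical manifold. Theorem~\ref{TihonovThm} then gives the slow manifold $\bar X(t) = \Xi^* + \Onot(\eps)$. Global asymptotic stability is automatic: the ODE~\eqref{eqMDfastSlowVarianceEqn} is linear in $\Xi$, and after vectorization the homogeneous part has generator $I\otimes A + A\otimes I$, whose spectrum consists of pairwise sums $\lambda_i + \lambda_j$ of eigenvalues of $A$; all such sums have strictly negative real part.

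The only genuine technical obstacle is establishing the regularity of the forcing term at $\eps=0$ required by Fenichel-Tikhonov; here the hypothesis $H>1/2$ (for integrability at $0$) and the Hurwitz property of $A$ (for exponential decay at $\infty$) combine exactly as in the one-dimensional argument. The remaining step, reading off the explicit solution of the matrix Lyapunov equation via the integral formula, is classical once the Hurwitz assumption is in place.
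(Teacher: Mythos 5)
Your proposal is correct and follows essentially the same route as the paper: differentiate the double integral to obtain the matrix ODE, substitute $s=(t-u)/\eps$ to identify the singular limit, solve the resulting Lyapunov equation via the Hurwitz property of $A$, and invoke Fenichel--Tikhonov with linearity for global stability. Your $\C^1$-in-$\eps$ check via the tail integral $\int_{t/\eps}^\infty e^{As}s^{2H-2}\diff s$ is in fact slightly cleaner than the paper's (which differentiates $P(t)$ itself rather than $P(t)/\eps^{2H-1}$), but it is the same idea.
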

\begin{proof}
We again differentiate $t \mapsto \Xi(t)$ to obtain the ODE
\begin{align*}
\eps\frac{\diff}{\diff t} \Xi(t) = \eps \dot{\Xi}(t) &= A \Xi(t) + \Xi(t) A^\top + \frac{1}{\eps^{2H-1}} \left[ P(t) + P(t)^\top \right],
\end{align*}
where
\begin{align*}
P(t) = H(2H-1)\int_0^t e^{A(t-u)/\eps} (t-u)^{2H-2} \diff u.
\end{align*}
In order to be able to take the singular limit $\eps \rightarrow 0$ and apply Fenichel-Tikhonov (Theorem \ref{TihonovThm}) we need to prove at least one times continuous differentiability in $\eps= 0$. To do this, rewrite $\frac{1}{\eps^{2H-1}}P(t)$ by substituting $v = \frac{t-u}{\eps}$
%&\frac{H(2H-1)}{\eps^{2H-1}} \int_0^t e^{A(t-u)/\eps} (t-u)^{2H-2} \diff u \\
\begin{equation}\label{eq_MDintAfterSubst}
\begin{aligned}
&H(2H-1) \int_0^{\frac{t}{\eps}} e^{Av} v^{2H-2} \diff v \longrightarrow H(2H-1) \int_0^{\infty} e^{Av} v^{2H-2} \diff v \quad \text{ for } \eps \rightarrow 0.
\end{aligned}
\end{equation}
This implies continuity in $\eps = 0.$ To see that the right hand side of \eqref{eqMDfastSlowVarianceEqn} is continuously differentiable in $\eps = 0,$ it is sufficient to check it for the integral $P(t)$
%&= \frac{\diff}{\diff \eps}\left( H(2H-1) \int_0^t e^{A(t-u)/\eps} (t-u)^{2H-2} \diff u \right) \\
\begin{align*}
\frac{\diff}{\diff \eps}\left( P(t) \right) 
&= - H(2H-1) \int_0^t e^{A(t-u)/\eps} A(t-u)/\eps^2 (t-u)^{2H-2} \diff u,
\end{align*}
where the limit for $\eps \rightarrow 0$ exists because the exponential term dominates the polynomial term in $\eps.$
The slow subsystem hence reads
%We need to prove that $\Phi(t,t-\eps v) \rightarrow e^{A(t)v}$ for $\eps \rightarrow 0$ in a controlled way. Magnus expansion may not converge :(\\
%But if we have strong enough assumptions (autonomous system would work) the singular limit looks like this
\begin{align*}
0 &= A \Xi(t) + \Xi(t) A^\top + \left[ Q(t) + Q(t)^\top \right],
\end{align*}
where
\begin{align*}
Q(t) = H(2H-1) \int_0^{\infty} e^{Au} u^{2H-2} \diff u.
\end{align*}
This is a Lyapunov equation, and according to Lemma \ref{lemLyapunoveq} it has the unique solution
\begin{align*}
X^*(t) = \int_0^{\infty} e^{Au}\left( Q(t) + Q(t)^\top \right) e^{A^\top u} \diff u.
\end{align*}
By Fenichel-Tikhonov (Theorem \ref{TihonovThm}) we conclude that there is an asymptotically stable manifold of the form
\begin{align*}
\bar{X}(t) = \int_0^{\infty} e^{Au}\left( Q(t) + Q(t)^\top \right) e^{A^\top u} \diff u + \Onot(\eps).
\end{align*}
Note again that the stability property, which carries over from the critical manifold, is even global due to linearity of the ODE~\eqref{eqMDfastSlowVarianceEqn}. 
\end{proof}

\begin{rem}
We need that the linearization~\eqref{eqMDlinearization} is autonomous in this section for taking the singular limit in \eqref{eq_MDintAfterSubst}. In the non-autonomous case we need to compute the limit of
$\Phi(t,t-\eps v)$ for $\eps \rightarrow 0.$ We suspect that $\Phi(t,t-\eps v) \rightarrow e^{A(t)v}.$
\end{rem}

In order to investigate a multi-dimensional Lyapunov-Equation, we rely on the following result, see \cite[Lemma~5.1.2]{NoiseSlowFastSys}.
\begin{lem}[Lyapunov Equation]\label{lemLyapunoveq}
Let $A \in \R^{p \times p}$ and $B \in \R^{q \times q}$ with eigenvalues $a_1, \dots, a_p$ and $b_1, \dots, b_q.$ Then the operator $L: \R^{p\times q} \rightarrow \R^{p\times q}$ defined by
\begin{align*}
LX = AX + XB
\end{align*}
has eigenvalues of the form $\left\{ a_i + b_j \right\}_{i=1,\dots,p,j=1,\dots,q}.$ In particular, L is invertible if and only if $A$ and $-B$ don't have any common eigenvalue.
Moreover, if all eigenvalues of $A$ and $B$ have negative real part, then for any $C \in \R^{p \times q}$ the unique solution of the so-called \emph{Lyapunov equation} $AX + XB + C = 0$ is of the form
\begin{align*}
X = \int_0^{\infty}e^{Au}Ce^{Bu}\diff u.
\end{align*}
\end{lem}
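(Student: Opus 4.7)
The plan is to attack the three assertions in order: compute the spectrum of $L$, deduce invertibility, and then verify the explicit integral formula.

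\emph{Spectrum of $L$.} The cleanest route is to identify $L$ with a Kronecker sum. Writing $\mathrm{vec}$ for the column stacking map $\R^{p\times q}\to\R^{pq}$, the identity $\mathrm{vec}(AXB)=(B^\top\otimes A)\mathrm{vec}(X)$ gives
\begin{align*}
\mathrm{vec}(LX)=(I_q\otimes A+B^\top\otimes I_p)\,\mathrm{vec}(X),
\end{align*}
so $L$ is similar to the Kronecker sum $I_q\otimes A+B^\top\otimes I_p$. A standard fact about Kronecker sums (proved by simultaneously triangularising $A$ and $B^\top$ via Schur decompositions and inspecting the diagonal of the resulting upper triangular matrix) is that its spectrum is $\{a_i+b_j\}_{i,j}$ with matching multiplicities. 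As a double-check for the diagonalisable case, if $Au=a_iu$ and $B^\top w=b_jw$ then the rank-one matrix $uw^\top$ satisfies $L(uw^\top)=(a_i+b_j)uw^\top$, which explains why these numbers are eigenvalues; the Kronecker argument ensures they exhaust the list.

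\emph{Invertibility.} Since $\det L=\prod_{i,j}(a_i+b_j)$, the operator $L$ is invertible if and only if no $a_i+b_j$ vanishes, equivalently if and only if $A$ and $-B$ share no eigenvalue. When all eigenvalues of $A$ and $B$ have strictly negative real part, $\Re(a_i+b_j)<0$ for every pair, so $L$ is invertible and the Lyapunov equation $AX+XB+C=0$ has a unique solution for every $C$.

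\emph{The integral formula.} Define
\begin{align*}
X:=\int_0^\infty e^{Au}C e^{Bu}\,\diff u.
\end{align*}
Under the spectral assumption there are constants $K,\lambda>0$ with $\|e^{Au}\|,\|e^{Bu}\|\leq K e^{-\lambda u}$, so the integral converges absolutely and the integrand decays exponentially. Differentiating the integrand in $u$ yields $\frac{\diff}{\diff u}\bigl(e^{Au}Ce^{Bu}\bigr)=Ae^{Au}Ce^{Bu}+e^{Au}Ce^{Bu}B$. Integrating from $0$ to $\infty$ and using $e^{Au}Ce^{Bu}\to 0$ as $u\to\infty$ together with the value $C$ at $u=0$ gives $-C=AX+XB$, i.e.\ $X$ solves the Lyapunov equation. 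Uniqueness was already established via invertibility of $L$.

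The only genuinely non-trivial step is the spectrum computation in the non-diagonalisable case; everything else is a short calculus-of-matrix-exponentials exercise once the Kronecker product identification is in hand.
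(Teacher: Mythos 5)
Your proposal is correct. Note that the paper itself does not prove this lemma at all --- it simply cites \cite[Lemma~5.1.2]{NoiseSlowFastSys} --- so there is no in-paper argument to compare against; your vectorization/Kronecker-sum computation of the spectrum, the determinant criterion for invertibility, and the verification of the integral formula by differentiating $u \mapsto e^{Au}Ce^{Bu}$ and integrating (using the exponential decay bound $\Vert e^{Au}\Vert \leq K e^{-\lambda u}$, valid for any $\lambda$ strictly below the minimal modulus of the real parts of the eigenvalues) constitute the standard, complete proof of exactly this statement, essentially as found in the cited reference.
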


Note again that due to the linearity of the operator $LX = AX + XA^\top$ the rescaled covariance $t \mapsto \frac{1}{\sigma^2}\cov(\xi_t)$ as solution of~\eqref{eqMDfastSlowVarianceEqn} with starting value $\frac{1}{\sigma^2}\cov(\xi_0) = 0$ satisfies the following equation
\begin{align}
\label{eqMDSlowManifVarApproach}
\cov(\xi_t) = \sigma^2 \left( \bar{X}(t) - e^{At/\eps}\bar{X}(0) e^{A^\top t/\eps}\right),
\end{align}
which explicitly depicts the exponentially fast approach of the covariance towards the slow manifold, as it could have been already concluded by Fenichel-Tikhonov (Theorem~\ref{TihonovThm}).
This justifies the choice of our neighborhood this time, depending on the critical manifold 
\begin{align*}
\mathcal{B}(h) = \left\{ (x,t): t \in I, \langle x, X^*(t)^{-1} x \rangle < h^2 \right\}.
\end{align*}

As already previously mentioned in Remark \ref{remCriticalManifAlsoOK} choosing the neighborhood depending on the critical manifold instead of the slow manifold does not worsen our estimates. So we expect the same to be true in the higher dimensional case. Therefore, we have used the critical manifold $X^*$ (which is time-independent in our case) this time because our strategy depends on diagonalizing, and we do not spell out the additional technical details regarding the $\Onot(\eps)$-term.

%%%%%%%%%%%%%%%%%%%%%%%%%%%%%%%%%%%%%%%%%%%%%%%%%%%%%%%%%%%%%%%%%%%%%%%%%%%%%%%%%%%%%%%%%%%%%%%%%%%%%%%%%%%%%%%%%%%%%%%%%%%%%%%%%%%%%%
\subsection{Estimates on the Deviations}
\subsubsection{No Restrictions on the Linearization}

The proof of Theorem~\ref{thmvariantgaussineqHoldercont} can be immediately extended the multi-dimensional case by proving the  mean-square H\"{o}lder continuity in each component of the covariance. 
 
\begin{lem}
\label{lemMDHolderCont}
Let $t \geq r_1 > r_2 \geq 0$, then there is a constant $G = G(t,\eps,\sigma,H)>0$
such that 
\begin{align*}
\left\| \e\left[ \left(\xi_{r_1} - \xi_{r_2} \right) \left(\xi_{r_1} - \xi_{r_2} \right)^\top\right] \right\|_2 \leq G \left| r_1 - r_2 \right|^{2H}.
\end{align*}
\end{lem}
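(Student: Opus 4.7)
The plan is to mimic the scalar proof of Theorem~\ref{thmvariantgaussineqHoldercont}, now carrying matrix-valued integrands through the computation and taking operator norms at the end. Using the representation~\eqref{eqMDsolution} and writing $\xi_{r_1}-\xi_{r_2}$ as a single stochastic integral on $[0,r_1]$, Proposition~\ref{cov:integral} applied component-wise (recall the components of the $m$-dimensional fBm are independent) gives
\begin{align*}
\e\bigl[(\xi_{r_1}-\xi_{r_2})(\xi_{r_1}-\xi_{r_2})^\top\bigr] = \frac{\sigma^2}{\eps^{2H}} \int_0^{r_1}\!\!\int_0^{r_1} D(u)\, D(v)^\top \phi(u-v)\diff u \diff v,
\end{align*}
where $D(u) := e^{A(r_1-u)/\eps} - \ind_{\{u\leq r_2\}}\,e^{A(r_2-u)/\eps}$ and $\phi$ is as in~\eqref{phi}. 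Since $\|\int M\|_2 \leq \int\|M\|_2$, the operator norm of the left-hand side is controlled by the corresponding scalar double integral of $\|D(u)\|_2\|D(v)\|_2\,\phi(u-v)$.

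Next I would split the domain of integration into the same three regions used in~\eqref{proofHoldercont1}--\eqref{proofHoldercont3}: $\{u,v\leq r_2\}$, the two mixed strips, and $\{r_2<u,v\leq r_1\}$. On the first region one writes $D(u) = \bigl(e^{A(r_1-r_2)/\eps} - \Id\bigr)\,e^{A(r_2-u)/\eps}$, whereas on the last region $D(u) = e^{A(r_1-u)/\eps}$. The ingredients needed are then the uniform bound $\|e^{As/\eps}\|_2\leq C$ for $s\geq 0$ (which follows from Assumption~\ref{assMDStable}~{\em 3} giving $\|e^{As}\|_2 \leq C e^{-as}$) and the Lipschitz estimate $\|e^{As_1/\eps}-e^{As_2/\eps}\|_2 \leq \frac{\|A\|_2}{\eps}|s_1-s_2|\cdot\sup_{s\geq 0}\|e^{As/\eps}\|_2$, obtained from $e^{As_1/\eps}-e^{As_2/\eps} = \int_{s_2}^{s_1} \frac{1}{\eps}A\,e^{Ar/\eps}\diff r$. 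With these two matrix bounds in hand, each of the three pieces reduces to a scalar integral estimated exactly as in the proof of Theorem~\ref{thmvariantgaussineqHoldercont}, producing contributions of the forms $C_1|r_1-r_2|^2$, $C_2\,t\,|r_1-r_2|^{2H}$, and $C_3\,|r_1-r_2|^{2H}$ respectively; the first is converted into an $|r_1-r_2|^{2H}$ contribution using $|r_1-r_2|\leq t$.

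The main obstacle is, as usual in going from scalars to matrices, keeping all constants uniform and honestly matrix-valued — in particular carrying the submultiplicativity through the product $D(u)D(v)^\top$ rather than simply squaring a scalar Lipschitz bound. Once the uniform spectral bound and the exponential Lipschitz estimate above are established, the rest is bookkeeping: collecting the three pieces yields the desired $\bigl\|\e[(\xi_{r_1}-\xi_{r_2})(\xi_{r_1}-\xi_{r_2})^\top]\bigr\|_2 \leq G\,|r_1-r_2|^{2H}$ with a constant $G=G(t,\eps,\sigma,H)$ depending on $\|A\|_2$, the spectral bound $C$, the Hurst parameter $H$, and $t$.
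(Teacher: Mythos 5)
Your proposal is correct and follows essentially the same route as the paper's proof: expand the matrix covariance, pull the operator norm inside the double integral, split the domain into the same regions as in the scalar case, and control the integrands via a uniform bound on $\|e^{As/\eps}\|_2$ together with a Lipschitz bound on differences of matrix exponentials, reducing everything to the estimates of Theorem~\ref{thmvariantgaussineqHoldercont}. Your handling of the constants is in fact slightly more careful than the paper's (which asserts $\|e^{A(r-u)/\eps}\|_2\leq 1$ and a Lipschitz constant given by the spectral radius, statements that strictly require more than just eigenvalues with negative real part), but this does not change the argument.
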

\begin{proof}
Let $t \geq r_1 > r_2 \geq 0$, then
\begin{align}
&\left\| \e\left[ \left(\xi_{r_1} - \xi_{r_2} \right) \left(\xi_{r_1} - \xi_{r_2} \right)^\top\right] \right\|_2 \notag \\
&=\frac{\sigma^2}{\eps^{2H}} \bigg\| \int_{0}^{r_1} \int_{0}^{r_1} \left( e^{A(r_1-u)/\eps} - \mathds{1}_{\{u \leq r_2\}}e^{A(r_2-u)/\eps}\right)  \notag \\
&\qquad\cdot\left( e^{A^\top(r_1-v)/\eps} - \mathds{1}_{\{v \leq r_2\}}e^{A^\top(r_2-v)/\eps}\right) \phi(u-v) \diff u \diff v \bigg\|_2 \notag
\\
&\leq\frac{\sigma^2}{\eps^{2H}} \int_{0}^{r_2} \int_{0}^{r_2} \left\| e^{A(r_1-u)/\eps} - e^{A(r_2-u)/\eps}\right\|_2 \left\| e^{A^\top(r_1-v)/\eps} - e^{A^\top(r_2-v)/\eps}\right\|_2 \phi(u-v) \diff u \diff v\label{eqMDHoldcont1}  \\
&+ \frac{\sigma^2}{\eps^{2H}} \int_{r_2}^{r_1} \int_{0}^{r_2} \left\| e^{A(r_1-u)/\eps} - e^{A(r_2-u)/\eps}\right\|_2 \left\| e^{A^\top(r_1-v)/\eps} \right\|_2 \phi(u-v) \diff u \diff v\label{eqMDHoldcont2} \\
&+ \frac{\sigma^2}{\eps^{2H}} \int_{0}^{r_2} \int_{r_2}^{r_1} \left\|e^{A(r_1-u)/\eps}\right\|_2 \left\| e^{A^\top(r_1-v)/\eps} - e^{A^\top(r_2-v)/\eps}\right\|_2 \phi(u-v) \diff u \diff v\label{eqMDHoldcont3}
\\
&+ \frac{\sigma^2}{\eps^{2H}} \int_{r_2}^{r_1} \int_{r_2}^{r_1} \left\| e^{A(r_1-u)/\eps} \right\|_2 \left\| e^{A^\top(r_1-v)/\eps} \right\|_2 \phi(u-v) \diff u \diff v.\label{eqMDHoldcont4} 
\end{align}
Since $A$ only has eigenvalues with negative real part we have for $r \geq u \geq 0$
\begin{align*}
\left\|e^{A(r-u)/\eps}\right\|_2 \leq 1, \quad\left\|e^{A^\top(r-u)/\eps}\right\|_2 \leq 1,
\end{align*}
and for $r_1 \geq u \geq 0$, $r_2 \geq u \geq 0$
\begin{align*}
\left\| e^{A(r_1-u)/\eps} - e^{A(r_2-u)/\eps}\right\|_2 \leq \frac{a}{\eps} \left| r_1 - r_2 \right|, \quad\left\| e^{A^\top(r_1-u)/\eps} - e^{A^\top(r_2-u)/\eps}\right\|_2 \leq \frac{a}{\eps} \left| r_1 - r_2 \right|,
\end{align*}
where $a := \max\{ |\lambda|: \lambda \text{ eigenvalue of } A\}.$
This enables us to prove the result similarly as in the one-dimensional case, i.e. a straightforward calculation using the last result now shows the required H\"older bounds by estimating \eqref{eqMDHoldcont1}-\eqref{eqMDHoldcont4}.
%we get
%\begin{align*}
%&\frac{\sigma^2}{\eps^{2H}} \int_{0}^{r_2} \int_{0}^{r_2} \left\| e^{A(r_1-u)/\eps} - e^{A(r_2-u)/\eps}\right\|_2 \left\| e^{A^\top(r_1-v)/\eps} - e^{A^\top(r_2-v)/\eps}\right\|_2 \phi(u-v) \diff u \diff v \\
%\leq &\frac{\sigma^2a^2}{\eps^{2H+2}} t^2 \left| r_1 - r_2 \right|^{2H}.
%\end{align*}
%Moreover,~\eqref{eqMDHoldcont2} can be estimated as follows
%\begin{align*}
%&\frac{\sigma^2}{\eps^{2H}} \int_{r_2}^{r_1} \int_{0}^{r_2} \left\| e^{A(r_1-u)/\eps} - e^{A(r_2-u)/\eps}\right\|_2 \left\| e^{A^\top(r_1-v)/\eps} \right\|_2 \phi(u-v) \diff u \diff v \\
%\leq &\frac{\sigma^2a}{\eps^{2H+1}} t \left| r_1 - r_2 \right|^{2H}.
%\end{align*}
%Similarly, we obtain the bound for~\eqref{eqMDHoldcont3}
%\begin{align*}
%&\frac{\sigma^2}{\eps^{2H}} \int_{0}^{r_2} \int_{r_2}^{r_1} \left\|e^{A(r_1-u)/\eps}\right\|_2 \left\| e^{A^\top(r_1-v)/\eps} - e^{A^\top(r_2-v)/\eps}\right\|_2 \phi(u-v) \diff u \diff v \\
%\leq &\frac{\sigma^2a}{\eps^{2H+1}} t \left| r_1 - r_2 \right|^{2H}.
%\end{align*}
%Finally, for~\eqref{eqMDHoldcont4} we have
%\begin{align*}
%&\frac{\sigma^2}{\eps^{2H}} \int_{r_2}^{r_1} \int_{r_2}^{r_1} \left\| e^{A(r_1-u)/\eps} \right\|_2 \left\| e^{A^\top(r_1-v)/\eps} \right\|_2 \phi(u-v) \diff u \diff v \\
%\leq &\frac{\sigma^2}{\eps^{2H}} \left| r_1 - r_2 \right|^{2H}.
%\end{align*}
%By combining the four estimates we obtain the result. 
\end{proof}

The mean-square H\"{o}lder continuity of $(\xi_t)_{t \in I}$ implies the same for each component. Hence, we can establish the following qualitative result.

\begin{theorem}
Let $t \in I.$ Then under Assumption \ref{assMDStable} there is a constant $K = K(t,\eps,\sigma,H) >0$ such that for $h >0$ the following estimate holds true for $\eps >0$ small enough
\begin{align*}
\p \left( \sup_{0 \leq r \leq t} \langle \xi_r, X^*(r)^{-1} \xi_r \rangle \geq h^2 \right) 
\leq \sum_{k=1}^m K t \left( \sqrt{\lambda_k h}d_k^*\right)^{\frac{1}{H}} \exp\left( -\frac{\lambda_k h^2}{2\sigma^2}(1 - \Onot(\eps))\right),
\end{align*}
where $\lambda_k \geq 0$ with $\sum_{k=1}^m \lambda_k = 1$ and $d^*_k$ denote the (time-independent) eigenvalues of $X^*.$
\end{theorem}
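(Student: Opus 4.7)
The plan is to reduce the $m$-dimensional exit problem to a union of one-dimensional Gaussian tail estimates by rotating into a basis in which $X^*$ is diagonal, and then invoking Corollary~\ref{CorgaussineqforHolderCont} componentwise. First I would note that $X^*$, arising as the limit of the covariance matrices $\frac{1}{\sigma^2}\cov(\xi_t)$ in~\eqref{eqMDSlowManifVarApproach}, is symmetric and positive definite. Fix an orthogonal $U \in \R^{m\times m}$ with $U^\top X^* U = D = \mathrm{diag}(d_1^*,\ldots,d_m^*)$ and set $\tilde{\xi}_t := U^\top \xi_t$, which is again a centered Gaussian process with a.s.\ continuous paths. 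The quadratic form of interest then decouples as
\begin{align*}
\langle \xi_r, X^*(r)^{-1}\xi_r\rangle = \langle \tilde\xi_r, D^{-1}\tilde\xi_r\rangle = \sum_{k=1}^m \frac{\tilde\xi_{r,k}^2}{d_k^*}.
\end{align*}

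Next, I would exploit the elementary inequality $\sup_r \sum_k a_k(r) \leq \sum_k \sup_r a_k(r)$: for any weights $\lambda_k \geq 0$ with $\sum_k \lambda_k = 1$, a union bound yields
\begin{align*}
\p\Big(\sup_{0 \leq r \leq t} \langle \xi_r, X^*(r)^{-1}\xi_r\rangle \geq h^2\Big)
\leq \sum_{k=1}^m \p\Big(\sup_{0 \leq r \leq t} |\tilde\xi_{r,k}| \geq h\sqrt{\lambda_k d_k^*}\Big).
\end{align*}
For each component I would then apply Corollary~\ref{CorgaussineqforHolderCont}. The mean-square H\"older continuity of $\tilde\xi_{\cdot,k}$ with exponent $2H$ is inherited from Lemma~\ref{lemMDHolderCont}, since orthogonal rotation preserves the operator norm: $\e[(\tilde\xi_{r_1,k} - \tilde\xi_{r_2,k})^2] = e_k^\top U^\top \e[(\xi_{r_1}-\xi_{r_2})(\xi_{r_1}-\xi_{r_2})^\top] U e_k \leq G|r_1-r_2|^{2H}$. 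The variance is controlled via~\eqref{eqMDSlowManifVarApproach}: since $e^{Ar/\eps}\bar{X}(0)e^{A^\top r/\eps}$ is positive semidefinite and $\bar{X}(r) = X^* + \Onot(\eps)$,
\begin{align*}
\var(\tilde\xi_{r,k}) = \sigma^2 d_k^* - \sigma^2\big(U^\top e^{Ar/\eps}\bar{X}(0)e^{A^\top r/\eps}U\big)_{kk} + \Onot(\sigma^2\eps) \leq \sigma^2 d_k^*\big(1 + \Onot(\eps)\big).
\end{align*}
Substituting into Corollary~\ref{CorgaussineqforHolderCont} produces, for each $k$, a bound of the form $K\,t\,(h\sqrt{\lambda_k d_k^*})^{1/H}\exp\big(-\lambda_k h^2/(2\sigma^2)(1-\Onot(\eps))\big)$; summing over $k$ yields the claimed estimate.

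The main obstacle is the variance control in the rotated basis: because $A$ and $X^*$ generally do not commute, the transient term $e^{Ar/\eps}\bar{X}(0)e^{A^\top r/\eps}$ couples the eigen-directions of $X^*$, and one needs to ensure that its $(k,k)$-entry stays nonnegative uniformly in $r$ so that the subtraction can only improve the bound. This is immediate from positive semidefiniteness of $\bar{X}(0)$. The rest is routine: the H\"older inheritance is an elementary orthogonality calculation and the union bound is straightforward. Note that a simultaneous diagonalization of $A$ and $X^*$ would further decouple the components into independent one-dimensional fractional Ornstein-Uhlenbeck processes, but this would require the restrictive commutator condition $[A,X^*]=0$ --- generally false under Assumption~\ref{assMDStable} --- so the rotation-plus-componentwise-H\"older-bound strategy is the natural one in the autonomous stable multi-dimensional setting.
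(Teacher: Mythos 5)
Your proof takes essentially the same route as the paper: orthogonally diagonalize $X^*$, decouple the quadratic form, take a weighted union bound with the $\lambda_k$, apply the Slepian/Piterbarg estimate componentwise via the mean-square H\"older continuity inherited from Lemma~\ref{lemMDHolderCont}, and control the variance through the relation~\eqref{eqMDSlowManifVarApproach}. You do, however, treat the variance upper bound more carefully than the paper does. In the paper's proof that step is written as
\begin{align*}
\left(O e^{At/\eps}\bar{X}(0) e^{A^\top t/\eps}O^\top\right)_{kk} = e^{2a_kt/\eps}\left(O\bar{X}(0)O^\top\right)_{kk},
\end{align*}
which tacitly assumes that the same orthogonal matrix diagonalizes both $X^*$ and $A$, i.e.\ $[A,X^*]=0$; this is not guaranteed by Assumption~\ref{assMDStable} (the paper only imposes commutativity-like structure in the subsequent subsection on symmetric $A$). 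Your observation that $e^{Ar/\eps}\bar{X}(0)e^{A^\top r/\eps}$ is positive semidefinite, so that its diagonal entries in any orthonormal frame are nonnegative and can simply be discarded when bounding $\var(\tilde\xi_{r,k})$ from above, is exactly the right fix and makes the estimate valid in the generality the theorem actually claims. The only other difference is cosmetic: the paper's intermediate exit levels contain a typo ($\lambda_k h^2$ where $\sqrt{\lambda_k}\,h$ is meant), and your prefactor $\big(h\sqrt{\lambda_k d_k^*}\big)^{1/H}$ is the internally consistent one.
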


\begin{proof}
Note that the critical manifold $X^*$ is symmetric and in the autonomous case it is time independent in addition. This implies that it is diagonalizable with respect to an orthogonal matrix $O$ (independent of time). Let $O^\top = \left( O_1^\top, \dots,  O_m^\top \right)$, where $O_k$ denotes the $k$-th row of $O$ and $d^*_k(r) \equiv d^*_k$ be the corresponding eigenvalues.
This enables us to reduce the problem to the estimate of the one-dimensional problem, using the notation $D^*(r) = \mathrm{diag}(d_k^*),$
\begin{align*}
\p \left( \sup_{0 \leq r \leq t} \langle \xi_r, X^*(r)^{-1} \xi_r \rangle \geq h^2 \right) &= \p \left( \sup_{0 \leq r \leq t} \langle O \xi_r, D^*(r)^{-1} O \xi_r \rangle \geq h^2 \right) \\
&= \p \left( \sup_{0 \leq r \leq t} \sum_{k=1}^m  O_k\xi_r d^*_k(r)^{-1} O_k \xi_r \geq h^2 \right)\\
%\intertext{for $\lambda_k \geq 0$ with $\sum\limits_{k=1}^m \lambda_k = 1$}
&\leq \sum_{k=1}^m \p \left( \sup_{0 \leq r \leq t} d^*_k(r)^{-1}(O_k \xi_r)^2 \geq \lambda_k h^2 \right) \\
&\leq 2 \sum_{k=1}^m \p \left( \sup_{0 \leq r \leq t} \frac{ O_k \xi_r}{\sqrt{d^*_k(r)}} \geq \lambda_k h^2 \right) \\
&= 2 \sum_{k=1}^m \p \left( \sup_{0 \leq r \leq t} O_k \xi_r \geq \lambda_k h^2 d^*_k \right).
\end{align*}
We have already proven that $(\xi_t)_{t \in I}$ is mean-square H\"{o}lder continuous in Lemma \ref{lemMDHolderCont}, which directly implies the same property for the components in the $O$-coordinate system. This means that we can apply Theorem \ref{gaussineqforHolderCont} for $O_k \xi$. This leads to
\begin{align*}
\p \left( \sup_{0 \leq r \leq t} O_k \xi_r \geq \lambda_k h^2 d^*_k \right) 
\leq K t \left( \sqrt{\lambda_k h}d_k^*\right)^{\frac{1}{H}} \exp\left( -\frac{\lambda_k h^2d^*_k}{2\sup_{0 \leq r \leq t} \var(O_k\xi_r)}\right).
\end{align*}
Now note that \eqref{eqMDSlowManifVarApproach} written in the $k$-th component in the $O$-coordinate system reads as %\textcolor{red}{[TODO: Starting from here, some of the matrix vector notation does not seem consistent, at least to me, because we have many times the situation $vA$, i.e., a \textbf{column vector} gets acted on from the right by a matrix and this is impossible using normal matrix-vector conventions so one has to say what we mean by $vA$ at least once... e.g.~it could mean $vA:=v^\top A$...]}
\begin{align*}
\var(O_k\xi_t) &= \left(O\cov(\xi_t)O^\top\right)_{kk}\notag \\
&= \sigma^2 \left( O\left(X^*(t) + \Onot(\eps)\right)O^\top - O e^{At/\eps}\bar{X}(0) e^{A^\top t/\eps}O^\top\right)_{kk} \\
&= \sigma^2 \left( \left(d_k^*(t) + \Onot(\eps)\right) -  e^{2a_kt/\eps}\left(O\bar{X}(0)O^\top\right)_{kk}\right) \\
&= \sigma^2 \left( \left(d_k^* + \Onot(\eps)\right) -  e^{2a_kt/\eps}\left(O\bar{X}(0)O^\top\right)_{kk}\right), 
\end{align*}
This further implies
\begin{align*}
\p \left( \sup_{0 \leq r \leq t} O_k \xi_r \geq \lambda_k h^2 d^*_k \right) 
&\leq K t \left( \sqrt{\lambda_k h}d_k^*\right)^{\frac{1}{H}} \exp\left( -\frac{\lambda_k h^2d^*_k}{2\sup_{0 \leq r \leq t} \var(O_k\xi_r)}\right) \\
&\leq K t \left( \sqrt{\lambda_k h}d_k^*\right)^{\frac{1}{H}} \exp\left( -\frac{\lambda_k h^2}{2\sigma^2}\frac{d^*_k}{d^*_k + \Onot(\eps)}\right) \\
&\leq K t \left( \sqrt{\lambda_k h}d_k^*\right)^{\frac{1}{H}} \exp\left( -\frac{\lambda_k h^2}{2\sigma^2}(1 - \Onot(\eps))\right).
\end{align*} 
Summing over the dimensions yields
\begin{align*}
\p \left( \sup_{0 \leq r \leq t} \langle \xi_r, X^*(r)^{-1} \xi_r \rangle \geq h^2 \right) 
\leq \sum_{k=1}^m K t \left( \sqrt{\lambda_k h}d_k^*\right)^{\frac{1}{H}} \exp\left( -\frac{\lambda_k h^2}{2\sigma^2}(1 - \Onot(\eps))\right)
\end{align*}
and this finishes the proof.
\end{proof}

In the case when $A$ is normal we get a nice description of the $d_k^*$. The corresponding $k$-th component of the diagonalized critical manifold results in
\begin{align*}
d^*_k = \:&H(2H-1) \frac{1}{\left| a_k + \overline{a_k}\right|} \int_0^{\infty} \left(e^{a_kv} + e^{\overline{a_k}v}\right) v^{2H-2} \diff v,
\intertext{which simplifies even further if all eigenvalues are real}
d^*_k = &\:H(2H-1) \frac{1}{\left| a_k\right|} \int_0^{\infty} e^{a_kv} v^{2H-2} \diff v \\
= &\:H(2H-1)\frac{1}{\left| a_k \right|^{2H}} \Gamma (2H-1) 
= \frac{1}{\left| a_k \right|^{2H}} H \Gamma (2H).
\end{align*}

%%%%%%%%%%%%%%%%%%%%%%%%%%%%%%%%%%%%%%%%%%%%%%%%%%%%%%%%%%%%%%%%%%%%%%%%%%%%%%%%%%%%%%%%%%%%%%%%%%%%%%%%%%%%%%%%%%%%%%%%%%%%%
\subsubsection{Symmetric Linearization}

From now on, we consider the case when $A$ is a symmetric matrix (i.e.~$A = A^\top$). The reason for this restriction is that in the following the proves to bound the probability of $(\xi_t)_{t \in I}$ exiting the neighborhood up to time $t$
\begin{align*}
\p \left( \exists \:0 \leq r \leq t: \: (x_r,y_r) \notin \mathcal{B}(h) \right) 
\end{align*}
is based on linearizing the underlying system and understanding the structure of the eigenvalues of the covariance. We actually require normality of $A$ for this strategy to work as it is sufficient to use the functional equality of the matrix exponential. Furthermore, $e^A$ inherits the normality structure, which is a necessary and sufficient criterion to characterize the eigenvalues of $e^{A\lambda}e^{A^\top\mu},$ $\lambda,\mu \geq 0$.
To be able to generalize the result of variant 1 it is crucial that the eigenvalues of $A$ are all real. These two criteria already imply that $A$ is symmetric. In particular, we see that the critical manifold is of the form
\begin{align*}
X^*(t) &= H(2H-1) \int_0^{\infty} e^{Au}\left( \int_0^{\infty} \left(e^{Av} + e^{A^\top v} \right) v^{2H-2} \diff v \right) e^{A^\top  u} \diff u \\
&= H(2H-1) \int_0^{\infty} \int_0^{\infty} \left(e^{A(u+v) + A^\top u} + e^{Au + A^\top (v+u)}\right) v^{2H-2} \diff v \diff u.
\end{align*}
Thanks to the discussion above the we can consider the diagonalization $ D^*(t) := UX^*(t)U^\top $. Its $k$-th ($k=1,\dots,m$) diagonal component is given by
\begin{align*}
d^*_k(t) := \left( UX^*(t)U^\top  \right)_{kk}
&= H(2H-1) \int_0^{\infty} \int_0^{\infty} \left(e^{a_k(u+v) + a_ku} + e^{a_k u + a_k(v+u)}\right) v^{2H-2} \diff v \diff u \\
&= H(2H-1) \frac{1}{\left| a_k\right|} \int_0^{\infty} e^{a_kv} v^{2H-2} \diff v \\
&= H(2H-1)\frac{1}{\left| a_k \right|^{2H}} \Gamma (2H-1) = \frac{1}{\left| a_k \right|^{2H}} H \Gamma (2H).
\end{align*}
Similarly we can rewrite the covariance
\begin{align*}
\cov(\xi_t) &= \frac{\sigma^2}{\eps^{2H}} \int_0^t \int_0^t e^{A(t-u)} e^{A^\top (t-v)} H (2H - 1) \left| u - v \right|^{2H-2} \diff u \diff v \\
&= \frac{\sigma^2}{\eps^{2H}} H (2H - 1) \int_0^t \int_0^t e^{A(t-u) + A^\top (t-v)} \left| u - v \right|^{2H-2} \diff u \diff v. 
\end{align*}
and diagonalize it with respect to the same $U$ and consider its $k$-th ($k=1,\dots,m$) component %\textcolor{red}{[TODO: please double-check if I understood the following correctly in the old version as there was a confusion between variance vector, its entries and the covariance matrix. Of course, also above one works with covariance matrices at certain points not vectors.]}
\begin{align*}
\left(U\cov(\xi_t)U^\top \right)_{kk} &= \left(\cov(U\xi_t)\right)_{kk} \\
&= \frac{\sigma^2}{\eps^{2H}} H (2H - 1) \int_0^t \int_0^t e^{a_k(t-u) + a_k(t-v)} \left| u - v \right|^{2H-2} \diff u \diff v.
\end{align*} 
We obtain the following result
\begin{theorem}
Let $t \in I.$ Then under Assumption \ref{assMDStable} and if $A$ is in addition symmetric with real eigenvalues $a_1, \dots, a_m$ there is a constant  such that for $h >0$ the following estimate holds for $\eps >0$ small enough
\begin{align*}
\p \left( \sup_{0 \leq r \leq t} \langle \xi_r, X^*(r)^{-1} \xi_r \rangle \geq h^2 \right)
2e \left\lceil \frac{\left| a_+ \right|t}{\eps} \frac{h^2}{\sigma^2} (1 + \Onot (\eps))\right\rceil \exp\left( -\frac{h^2}{2\sigma^2m}\right),
\end{align*}
where $a_{+} = \max\{|\mu|: \mu \text{ eigenvalue of }A\}.$
\end{theorem}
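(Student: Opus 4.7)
The plan is to exploit the symmetry of $A$ to decouple the linearized SDE~\eqref{eqMDlinearization} into $m$ independent one-dimensional equations, and then to apply the one-dimensional Variant~1 estimate (Theorem~\ref{thmvariantbernsteinineq}) to each component. Concretely, write $A = U^\top D U$ with $U$ orthogonal and $D = \mathrm{diag}(a_1,\dots,a_m)$, $a_k < 0$. Setting $\eta_t := U \xi_t$ and $\tilde W^H_t := U W^H_t$, the linearized SDE transforms into
\begin{align*}
\diff \eta_t = \frac{1}{\eps}\,D \eta_t\, \diff t + \frac{\sigma}{\eps^H}\, \diff \tilde W^H_t.
\end{align*}
Since the components of $W^H$ are independent one-dimensional fBms with the same Hurst index and $U$ is orthogonal, $\tilde W^H$ is again an $m$-dimensional fBm with independent components (uncorrelated jointly Gaussian processes with matching covariance are independent). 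Hence the $\eta^k$ are independent one-dimensional fractional Ornstein-Uhlenbeck processes with constant drift $a_k$, noise level $\sigma$, and $F\equiv 1$.

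The explicit computation of $X^*$ performed right before the theorem shows that $U X^* U^\top = \mathrm{diag}(d_1^*,\dots,d_m^*)$ with $d_k^* = H\Gamma(2H)/|a_k|^{2H}$; equivalently, $X^*$ is diagonalized by the same $U$ as $A$, which is evident because $X^*$ is a matrix function of $A$ via the integral representation in Proposition~\ref{propMDfastSlowVariance}. Consequently
\begin{align*}
\langle \xi_r, X^{*}(r)^{-1}\xi_r\rangle \;=\; \sum_{k=1}^m \frac{(\eta^k_r)^2}{d_k^*}.
\end{align*}
On the event that this sum exceeds $h^2$ at some $r$, at least one summand exceeds $h^2/m$ at that same $r$, so subadditivity yields
\begin{align*}
\p\Bigl(\sup_{0\leq r\leq t} \langle \xi_r, X^{*}(r)^{-1}\xi_r\rangle\geq h^2\Bigr)
\;\leq\; \sum_{k=1}^m \p\Bigl(\sup_{0\leq r\leq t} \frac{|\eta^k_r|}{\sqrt{d_k^*}} \geq \frac{h}{\sqrt m}\Bigr).
\end{align*}

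For each summand I would apply Theorem~\ref{thmvariantbernsteinineq} with $a(\cdot)\equiv a_k$, $F\equiv 1$, and threshold $h/\sqrt m$. By Proposition~\ref{fastSlowVariance} the corresponding one-dimensional slow manifold satisfies $\zeta_k(r) = d_k^* + \Onot(\eps)$, so replacing $\sqrt{\zeta_k(r)}$ by $\sqrt{d_k^*}$ only introduces a multiplicative $1+\Onot(\eps)$ correction that can be folded into the $(1+\Onot(\eps))$-term already tracked in Variant~1. The one-dimensional estimate produces, for each $k$, an upper bound of
\begin{align*}
2e\Bigl\lceil \frac{|a_k|\,t}{\eps}\,\frac{h^2}{m\sigma^2}(1+\Onot(\eps))\Bigr\rceil \exp\Bigl(-\frac{h^2}{2m\sigma^2}\Bigr).
\end{align*}
Bounding $|a_k|\le|a_+|$ and summing over $k=1,\dots,m$, the $m$ from the union bound multiplies the ceiling, which can be absorbed into $\lceil |a_+| t h^2/(\eps \sigma^2)(1+\Onot(\eps))\rceil$ (using $m\lceil x\rceil\leq \lceil m x\rceil + m$ and incorporating the additive term into the $\Onot(\eps)$ bookkeeping for $\eps$ sufficiently small), yielding the stated prefactor.

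The main obstacle is the pair of algebraic facts underpinning the decoupling: that $\tilde W^H = U W^H$ remains an $m$-dimensional fBm with independent components, and that one and the same orthogonal matrix $U$ simultaneously diagonalizes both $A$ and $X^*$. The first follows from the Gaussianity of $W^H$ together with the invariance of the covariance structure under orthogonal transformations; the second is immediate from the representation of $X^*$ as a double integral of $e^{Au}$ and $e^{A^\top u}$ in Proposition~\ref{propMDfastSlowVariance}, which commutes with $A$ in the symmetric case. Everything else is a routine reduction to the scalar setting of Section~\ref{sec_var1}.
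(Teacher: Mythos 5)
Your proof is correct, and it reaches the bound by a route the paper deliberately chose not to take. You orthogonally transform the SDE itself, so that $\eta=U\xi$ solves $m$ decoupled scalar fractional Ornstein--Uhlenbeck equations (your observation that $UW^H$ is again an $m$-dimensional fBm is right, though independence of the components is not even needed, since the union bound works regardless), identify $d_k^*=H\Gamma(2H)/\left|a_k\right|^{2H}$ as the critical manifold of the $k$-th scalar problem via Proposition~\ref{fastSlowVariance}, and then invoke Theorem~\ref{thmvariantbernsteinineq} as a black box with threshold $h/\sqrt{m}$. The paper instead never transforms the noise or the equation: it works with the projections $U_k\xi_r$ of the multidimensional solution, computes $\var(U_k\xi_r)$ from the matrix covariance together with the relation \eqref{eqMDSlowManifVarApproach}, verifies the hypotheses of Lemma~\ref{bernsteintypeineq} for $e^{-a_k r/\eps}U_k\xi_r$, and re-runs the partition-and-optimize argument of Variant~1 explicitly, carrying general weights $\lambda_k$ and only at the end optimizing to $\lambda_k=1/m$ (your equal split $h^2/m$ is the same choice, made up front). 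The paper even remarks after its proof that one could have ``diagonalized the SDE in the beginning'' and reused the whole one-dimensional theory, which is exactly your argument, but prefers the in-place multidimensional computation so as to exhibit which steps could survive for non-symmetric $A$. Your route buys brevity and direct reuse of the one-dimensional theorem; the paper's buys a template for generalization. Your final bookkeeping (trading $\zeta_k$ for $d_k^*$ at a $1+\Onot(\eps)$ cost, in the spirit of Remark~\ref{remCriticalManifAlsoOK}, and absorbing the factor $m$ from the sum into the ceiling via $m\lceil y\rceil\leq\lceil my\rceil+m$ together with $\Onot(\eps)$ absorption for small $\eps$) is at the same level of rigor as the paper's own final summation step, which silently performs the identical absorption.
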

\begin{proof}
Let $U^\top  = \begin{pmatrix}  U_1^\top  & \cdots  &U_m^\top  \end{pmatrix}$, where $U_k$ denotes the $k$-th row of $U.$ 
Now
\begin{align*}
\p \left( \sup_{0 \leq r \leq t} \langle \xi_r, X^*(r)^{-1} \xi_r \rangle \geq h^2 \right) &= \p \left( \sup_{0 \leq r \leq t} \langle U \xi_r, D^*(r)^{-1} U \xi_r \rangle \geq h^2 \right) \\
&= \p \left( \sup_{0 \leq r \leq t} \sum_{k=1}^m (U_k \xi_r) d^*_k(r)^{-1} (U_k \xi_r) \geq h^2 \right)
\intertext{for $\lambda_k \geq 0$ with $\sum\limits_{k=1}^m \lambda_k = 1$}
&\leq \sum_{k=1}^m \p \left( \sup_{0 \leq r \leq t} d^*_k(r)^{-1} (U_k \xi_r)^2 \geq \lambda_k h^2 \right) \\
&= 2 \sum_{k=1}^m \p \left( \sup_{0 \leq r \leq t} \frac{U_k \xi_r}{\sqrt{d^*_k(r)}} \geq \sqrt{\lambda_k} h \right).
\end{align*}
Due to the normality of $e^{A(t-u)}$ (inherited by $A$) the Gaussian process $U_k \xi$ has variance
\begin{align*}
\var(U_k \xi_r) = \frac{\sigma^2}{\eps^{2H}} \int_0^t \int_0^t e^{a_k(t-u)/\eps} e^{a_k(t-v)/\eps} H (2H - 1) \left| u - v \right|^{2H-2} \diff u \diff v.
\end{align*} 
In particular, we can show that the process $\left(e^{-a_kt}U_k\xi_t\right)_{t \geq 0}$ satisfies the assumptions of Lemma \ref{bernsteintypeineq}, so that we can apply the Bernstein-type inequality. (The proof is completely analogous as in the one-dimensional case, see proof of Theorem \ref{thmvariantbernsteinineq}.)
To get a relation between the $k$-th value of the diagonalized covariance and the corresponding component of the  critical manifold consider \eqref{eqMDSlowManifVarApproach} in the $U$-coordinate system
\begin{align}
\var(U_k\xi_t) &= \left(U\cov(\xi_t)U^*\right)_{kk}\notag \\
&= \sigma^2 \left( U\left(X^*(t) + \Onot(\eps)\right)U^* - U e^{At/\eps}\bar{X}(0) e^{A^\top t/\eps}U^*\right)_{kk}\notag \\
&= \sigma^2 \left( \left(d_k^*(t) + \Onot(\eps)\right) -  e^{2a_kt/\eps}\left(U\bar{X}(0)U^*\right)_{kk}\right)\notag \\
&= \sigma^2 \left( \left(d_k^* + \Onot(\eps)\right) -  e^{2a_kt/\eps}\left(U\bar{X}(0)U^*\right)_{kk}\right), \label{eqMDcomponentVarandCritManif}
\end{align}
as $d^*(t) \equiv d^*$ is actually independent of time in our case.
Now, we can use the same strategy as variant 1 in the one dimensional case for each $k.$ For $\gamma \in (0, 1/2)$ let $0 = t_0 < t_1 < \ldots < t_N$ be a partition containing the interval $[0,t]$ such that 
\begin{align*}
-a_k(t_{i+1}-t_i) = \eps \gamma \quad \quad \text{for} \quad \quad 0 \leq i < N = \left\lceil \frac{\left| a_kt \right|}{\eps \gamma} \right\rceil .
\end{align*}
We start by estimating the probability of the exit time on $[t_i,t_{i+1})$ for $i \in \{0, \dots, N-1 \}$
\begin{align*}
&\p \left( \sup_{t_i \leq r < t_{i+1}} \frac{U_k \xi_r}{\sqrt{d^*_k(r)}} \geq \sqrt{\lambda_k} h \right) \\
\leq\: &\p \left( \sup_{t_i \leq r < t_{i+1}} e^{-a_kr/\eps} U_k \xi_r \geq \sqrt{\lambda_k} h \inf_{t_i \leq r < t_{i+1}} e^{-a_kr/\eps} \sqrt{d^*_k(r)} \right). 
\intertext{Applying Lemma \ref{bernsteintypeineq}}
\leq \: &\exp\left( -\frac{1}{2} \frac{\lambda_k h^2 \inf_{t_i \leq r < t_{i+1}} e^{-2a_kr/\eps} d^*_k(r)}{e^{-2a_kt_{i+1}/\eps} \cov(U_k \xi_{t_{i+1}})}\right) \\
\overset{\eqref{eqMDcomponentVarandCritManif}}{\leq} &\exp\left( -\frac{\lambda_k h^2}{2\sigma^2} \frac{d^*_k}{\left(d^*_k + \Onot(\eps)\right)} e^{2a_k(t_{i+1}-t_{i})/\eps}\right) \\
\leq\: &\exp\left( -\frac{\lambda_k h^2}{2\sigma^2} e^{-2\gamma}(1 - \Onot (\eps))\right).
\end{align*}
Taking the union of the events that $U_k\xi$ has exited $\mathcal{B}(h)$ in $[t_i,t_{i+1})$ and using the subadditivity of the probability measure, yields
\begin{align*}
\p \left( \sup_{0 \leq r \leq t} \frac{U_k \xi_r}{\sqrt{d^*_k(r)}} \geq \sqrt{\lambda_k} h \right)
&\leq \sum_{i = 0}^{N-1} \p \left( \sup_{t_i \leq r < t_{i+1}} \frac{U_k \xi_r}{\sqrt{d^*_k(r)}} \geq \sqrt{\lambda_k} h \right) \\
&\leq \left\lceil \frac{\left| a_k \right|t}{\eps \gamma} \right\rceil \exp\left( -\frac{\lambda_k h^2}{2\sigma^2} e^{-2\gamma}(1 - \Onot (\eps))\right) \\
&\leq \left\lceil \frac{\left| a_k \right|t}{\eps \gamma} \right\rceil \exp\left(\frac{\lambda_k h^2}{\sigma^2} \gamma (1 + \Onot (\eps))\right) \exp\left( -\frac{\lambda_k h^2}{2\sigma^2}\right).
\end{align*}
Finding the minimal bound with respect to $\gamma$ now corresponds to optimizing
\begin{align*}
\gamma \mapsto \frac{\left| a_k \right|t}{\eps \gamma} \exp\left(\frac{\lambda_k h^2}{\sigma^2} \gamma (1 + \Onot (\eps))\right) \exp\left( -\frac{\lambda_k h^2}{2\sigma^2}\right),
\end{align*}
due to the monotonicity of $\lceil\cdot\rceil$. The optimal value is achieved for
\begin{align*}
\gamma = \frac{\sigma^2}{\lambda_k h^2} \left( 1 + \Onot(\eps)\right)^{-1}.
\end{align*}
Plugging this in the estimate gives the bound for the $k$-th component
\begin{align*}
\p \left( \sup_{0 \leq r \leq t} \frac{U_k \xi_r}{\sqrt{d^*_k(r)}} \geq \sqrt{\lambda_k} h \right) 
\leq e \left\lceil \frac{\left| a_k \right|t}{\eps} \frac{\lambda_k h^2}{\sigma^2} (1 + \Onot (\eps))\right\rceil \exp\left( -\frac{\lambda_k h^2}{2\sigma^2}\right).
\end{align*}
Summing over the dimensions
\begin{align*}
&\p \left( \sup_{0 \leq r \leq t} \langle \xi_r, X^*(r)^{-1} \xi_r \rangle \geq h^2 \right) \\
\leq\: &2 \sum_{k=1}^m \p \left( \sup_{0 \leq r \leq t} \frac{U_k \xi_r}{\sqrt{d^*_k(r)}} \geq \sqrt{\lambda_k} h \right) \\
\leq\: &2e \sum_{k=1}^m \left\lceil \frac{\left| a_k \right|t}{\eps} \frac{\lambda_k h^2}{\sigma^2} (1 + \Onot (\eps))\right\rceil \exp\left( -\frac{\lambda_k h^2}{2\sigma^2}\right) \\
\leq\: &2e \sum_{k=1}^m \left\lceil \frac{\left| a_+ \right|t}{\eps} \frac{\lambda_k h^2}{\sigma^2} (1 + \Onot (\eps))\right\rceil \exp\left( -\frac{\lambda_k h^2}{2\sigma^2}\right),
\end{align*}
where $a_+ := \max\{|\mu|: \mu \text{ eigenvalue of }A\}.$ The optimal value is now attained by choosing $\lambda_k = \frac{1}{m}.$ This yields 
\begin{align*}
\p \left( \sup_{0 \leq r \leq t} \langle \xi_r, X^*(r)^{-1} \xi_r \rangle \geq h^2 \right)
\leq 2e \left\lceil \frac{\left| a_+ \right|t}{\eps} \frac{h^2}{\sigma^2} (1 + \Onot (\eps))\right\rceil \exp\left( -\frac{h^2}{2\sigma^2m}\right).
\end{align*}
The proof is complete.
\end{proof}

Due to the symmetry of $A$ we could have diagonalized the SDE in the beginning (i.e.~look at it in the $U$-coordinate system) and done the whole theory established in Chapter 2 to get the existence of a slow manifold $\zeta_k(t)$ for $\var(U_k \xi_t)$, which is of the form
\begin{align*}
\zeta_k(t) &= \frac{1}{\left| a_k \right|^{2H}} H \Gamma (2H) + \Onot(\eps) \\
&= d^*_k + \Onot(\eps).
\end{align*}
However, we decided to use the results on the higher dimensional systems as much as possible to clearly indicate which steps of the proof can be generalized to more general classes of matrices beyond symmetric ones.

%%%%%%%%%%%%%%%%%%%%%%%%%%%%%%%%%%%%%%%%%%%%%%%%%%%%%%%%%%%%%%%%%%%%%%%%%%%%%%%%%%%%%%%%%%%%%%%%%%%%%%%%%%%%%%%%%%%%%%%%%%%%%%%%%%
\section{Outlook}
\label{ch_outlook}

This work provides a first step towards the investigation of fast-slow systems driven by fBm using sample paths estimates. So far we have examined the behavior close to a normally hyperbolic attracting invariant manifold in finite dimensions. Numerous extensions could be considered as next steps.

Having covered the uniformly attracting case, it is then natural to conjecture that there are scaling laws for the fluctuations as fast subsystem bifurcation points are approached, i.e., when hyperbolicity is lost. These results are available in the fast-slow Brownian motion case~\cite{KuehnCT2}. However, even when the fast dynamics is dominated by nonlinear terms~\cite{KuehnRomano} or one considers fast-slow maps with bounded noise~\cite{KuehnMalavoltaRasmussen} using modified proofs and additional technical tools it is possible to save many results. This robustness of the scaling laws near the loss of normal hyperbolicity leads one to conjecture that it will still be possible to prove such results for the fast-slow fBm case when $H\in(1/2,1)$.  

However, the analysis of fast-slow systems for $H \in(0,1/2)$ is expected to be more complicated due to several reasons. First of all, a different integration theory has to be considered, see for instance~\cite{Malliavin1,StochCalcforfBm}. Furthermore, the kernel~\eqref{phi} we have used to develop an approximation of the variance by means of the slow manifold has a non-integrable singularity for $H<1/2$. Last but not least, Bernstein-type inequalities as established in Lemma~\ref{bernsteintypeineq} do not hold true anymore, since the covariance function of the fractional Brownian motion is negative. Consequently, one has to develop completely different techniques in this case. Another related extension would be to analyze the dynamics of fast-slow systems driven by multiplicative noise. This issue, however, requires a more general theory than It\^{o}-calculus because the fractional Brownian motion is not a semi-martingale. 

Furthermore, one could consider other stochastic processes with memory. More precisely, one could think of other stochastic processes whose covariance functions are represented by 
        \begin{align*}
        \int\limits^{\min\{s,t\}}_{0} K(s,r) K(t,r) ~\diff r,~~\mbox{ for }s,t\geq 0,
        \end{align*}
for suitable square integrable kernels $K$, recall~\eqref{int:kernel}. Beyond fBm, further examples in this sense are the multi-fractional Brownian motion or the Rosenblatt process~\cite{CMaslowski}. However, the analysis of fast-slow systems in this case is a challenging question, since these processes do not have in general stationary increments and are no longer Gaussian (as e.g.~Rosenblatt processes).

Finally, one can also broaden the scope of the applications. Although climate dynamics is certainly a very important topic, where time-correlated noise is well-motivated by data such as temperature measurements, it is not the only possible application. Other areas, where fast-slow systems with fBm could be considered are financial markets. For example, assets could be modelled as fast variables influenced by fBm stochastic forcing, while the slow variables are political/social factors influencing the market, which change on a much slower time scale in many cases. Similar remarks and examples of concrete applications are likely also exist in many contexts in neuroscience, ecology and epidemiology, where stochastic fast-slow systems with Brownian motion are already used frequently.\medskip

\textbf{Acknowledgments:} CK \& AN would like to thank the German Science Foundation (DFG) for support via grants (KU 3333/2-1) and (GN 109/1-1). CK would like to thank the VolkswagenStiftung for support via a Lichtenberg Professorship. The authors are extremely grateful to Professor Andrey Dorogovtsev for valuable suggestions and for pointing out Theorem~A.1. CK also acknowledges partial support of the EU within the TiPES project funded the European Unions Horizon 2020 research and innovation programme under grant agreement No.~820970. The authors  thank the referees for the valuable suggestions.

%%%%%%%%%%%%%%%%%%%%%%%%%%%%%%%%%%%%%%%%%%%%%%%%%%%%%%%%%%%%%%%%%%%%%%%%%%%%%%%%%%%%%%%%%%%%%%%%%%%%%%%%%%%%%%%%%%%%%%%%%%%%%%%%%
\newpage
\appendix
\section{On the Limit Superior of Gaussian Processes}
\label{app_LimSup}

The following proof has been developed in personal communication with Professor Andrey Dorogovtsev.

\begin{theorem}\label{thm_limsupGauss}
Let $(Y_t)_{t\geq 0}$ be a centered Gaussian process with covariance function
$R(t,s) = \e[Y_t Y_s] $
satisfying
\begin{enumerate}
\item $R(t,s) \rightarrow 0$ for $t-s \rightarrow \infty,$
\item $ R(t,t) = 1$ for all $t\geq 0.$
\end{enumerate}
Then
\begin{align*}
\limsup_{t \rightarrow \infty} Y_t = \infty~~\mbox {a.s}.
\end{align*}
\end{theorem}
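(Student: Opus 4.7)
The plan is to reduce to a discrete sub-sequence along which the Gaussian values are nearly independent and then apply a second-moment Borel--Cantelli argument (the Kochen--Stone lemma). By condition~2, every $Y_t$ is a standard normal, so for any level $M>0$ the marginal probability $p_M := \p(Y_t > M) = 1-\Phi(M)$ is strictly positive and does not depend on $t$. The only obstruction to a direct application of the second Borel--Cantelli lemma is the correlation between distinct $Y_t$'s, which condition~1 lets us make arbitrarily small.

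First I would construct the sub-sequence recursively: given $t_1 < \cdots < t_{n-1}$, use condition~1 to pick $t_n > t_{n-1}$ so large that $|R(t_n, t_j)| \leq 2^{-n}$ for every $j < n$. This guarantees $\sum_{n\neq m} |R(t_n,t_m)| < \infty$. Fix an integer $M\geq 1$ and set $A_n^M := \{Y_{t_n} > M\}$. The pair $(Y_{t_n}, Y_{t_m})$ is a standard bivariate normal with correlation $\rho_{nm} := R(t_n,t_m)$, and the standard formula for differentiating the orthant probability in $\rho$ gives
\[
\p(A_n^M \cap A_m^M) - p_M^2 = \int_0^{\rho_{nm}} \frac{1}{2\pi\sqrt{1-u^2}} \exp\left(-\frac{M^2}{1+u}\right) \diff u.
\]
The integrand is bounded on $|u|\leq 1/2$ by a constant $C_M$, so (once $n$ is large enough that $|\rho_{nm}|\leq 1/2$) one has $|\p(A_n^M \cap A_m^M) - p_M^2| \leq C_M |\rho_{nm}|$.

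Summing these bounds over pairs yields
\[
\sum_{n,m=1}^N \p(A_n^M \cap A_m^M) \leq N p_M + N^2 p_M^2 + C_M \sum_{n\neq m} |\rho_{nm}| = N^2 p_M^2 + \Onot(N),
\]
so the Kochen--Stone lemma gives $\p(A_n^M \text{ i.o.}) \geq \liminf_N (Np_M)^2 / (N^2 p_M^2 + \Onot(N)) = 1$. Intersecting these full-measure events over $M\in\N$ shows $\limsup_n Y_{t_n} = \infty$ almost surely, and hence $\limsup_{t\to\infty} Y_t = \infty$ almost surely.

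The principal obstacle is the quantitative second-moment estimate for the joint orthant probability in terms of the correlation, which has to be extracted from the bivariate Gaussian density; a cleaner but less elementary alternative would be to obtain $\p(\limsup_t Y_t = \infty) > 0$ by any means and then upgrade to probability one via the Gaussian zero--one law for the supremum of a centered Gaussian family.
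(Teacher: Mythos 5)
Your proposal is correct and takes a genuinely different route from the paper. The paper selects the times $t_n$ recursively and then applies Gram--Schmidt orthogonalization in $L^2(\Omega)$ to $Y_{t_1},\dots,Y_{t_n}$; it writes $Y_{t_n}=Y'_{t_n}+\gamma_n$ where the $Y'_{t_n}$ are actually independent (being the innovations orthogonal to the previously spanned subspace) and $\gamma_n\to 0$ in probability, and then invokes the classical second Borel--Cantelli lemma for the independent sequence $(Y'_{t_n})$. You instead keep the correlated events $A_n^M=\{Y_{t_n}>M\}$ and apply the Kochen--Stone second-moment variant of Borel--Cantelli, controlling $\p(A_n^M\cap A_m^M)-p_M^2$ via Plackett's formula for the derivative of the bivariate Gaussian orthant probability in the correlation parameter; your recursive choice $|R(t_n,t_j)|\le 2^{-n}$ for $j<n$ gives $|\rho_{nm}|\le 1/4$ for all $n\ne m$ and summability of $\sum_{n\ne m}|\rho_{nm}|$, which is exactly what makes the Kochen--Stone ratio converge to $1$. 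Both arguments are sound. The trade-off: the paper's projection argument is elementary and self-contained (it only needs the bare second Borel--Cantelli lemma and Chebyshev for $\gamma_n\to 0$), while yours is shorter once you grant Kochen--Stone and the orthant-probability derivative, at the cost of importing those two classical facts. One minor remark: the constant bounding the integrand can be taken independent of $M$ (on $|u|\le 1/2$, the integrand is at most $1/(\pi\sqrt{3})$), though writing $C_M$ is harmless; and your closing alternative via a Gaussian zero--one law is not quite automatic, since $\{\limsup_t Y_t=\infty\}$ is a tail event but tail triviality for a dependent Gaussian family requires an additional mixing argument -- the direct Kochen--Stone route you actually carry out is the cleaner path.
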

\begin{proof} 
We aim to construct a sequence of independent random variables inductively to apply the Borel-Cantelli lemma. The strategy is highly based on the fact that for Gaussian random variables independence is equivalent to zero covariance. Let $t_1 := 0.$ Given $t_1 < \dots < t_n$ we apply Gram-Schmidt orthogonalization to the variables $Y_{t_1}, \dots, Y_{t_n}$ in $L^2(\Omega,\mathcal{F},\p)$ to obtain uncorrelated and normalized random variables $S_1, \dots, S_n$ satisfying for each $1 \leq i \leq n$
\begin{align*}
S_i = \sum_{j=1}^n a_{ij} Y_{t_j}, \quad
Y_{t_i} = \sum_{j=1}^n b_{ij} S_j,
\end{align*}
where the coefficients $a_{ij}$ and $b_{ij}$ only depend on the on the values $R(t_i,t_j)$ for $1 \leq i,j \leq n$ by construction. Now for any $t > t_n$ we can project $Y_t$ on the space $\Span_{1\leq j \leq n}\{Y_{t_j}\} = \Span_{1\leq j \leq n}\{S_j\}$
\begin{align*}
Y_{t} = \sum_{j=1}^n \e[Y_{t}S_j]S_j + Y'_{t}.
\end{align*}
Observe that by construction $Y'_{t}$ is uncorrelated, and hence independent, of the the variables $Y_{t_1}, \dots, Y_{t_n}.$ This holds in particular if we choose $t_{n+1} \geq \max\{t_n, n+1\}$ with
\begin{align*}
\e\left[ \left(\sum_{j=1}^{n+1} \e[Y_{t_{n+1}}S_j]S_j\right)^2\right] \leq \frac{1}{2^n},
\end{align*}
where the latter is possible because $R(t,s) \rightarrow 0$ for $t-s \rightarrow \infty.$
Inductively we get for each $n \in \N$
\begin{align*}
Y_{t_n} = Y'_{t_n} + \gamma_n.
\end{align*}
For the sequence of independent random variables $(Y'_{t_n})_{n \in \N}$
\begin{align*}
\sum_{j=1}^{\infty} \p \left( Y'_{t_n} > c \right) &\geq \sum_{j=1}^{\infty} \p \left( Y_{t_n} > c \right) + \p \left( -\gamma_n > c \right) \\
&= \sum_{j=1}^{\infty} \p \left( Y_{t_1} > c \right) + \p \left( -\gamma_n > c \right) \\
&= \infty,
\end{align*}
because $\var(Y_{t_n}) = R(t_n,t_n) = 1$ by assumption. By the Borel-Cantelli lemma we obtain 
\begin{align*}
\limsup_{n \rightarrow \infty} Y'_{t_n} = \infty~~\mbox{ a.s.}
\end{align*}
Now note that $\gamma_n \rightarrow 0$ as $n \rightarrow \infty$ as for any $c>0$
\begin{align*}
\p\left( |\gamma_n| > c\right) &\leq \frac{\e \left[|Y_{t_n} - Y'_{t_n}|^2\right]}{c^2} \\
&= \frac{\e\left[ \left(\sum_{j=1}^{n} \e[Y_{t_{n}}S_j]S_j\right)^2\right]}{c^2} \\
&\leq \frac{1}{2^{n-1}c^2} 
\longrightarrow 0, \text{ as } n \rightarrow \infty.
\end{align*}
This implies that
\begin{align*}
\limsup_{n \rightarrow \infty} Y_{t_n} = \limsup_{n \rightarrow \infty} (Y'_{t_n} + \gamma_n) = \infty~~\mbox{a.s}
\end{align*}
as required.
\end{proof}

Now we can apply this result to our setting. 
\begin{cor}\label{cor_limsupxi}
Under the Assumptions of Theorem \ref{thmvariantbernsteinineq} we have
\begin{align*}
\limsup_{t \rightarrow \infty} e^{-\alpha(t)/\eps}\xi_t = \infty~~\mbox{a.s.}
\end{align*}
\end{cor}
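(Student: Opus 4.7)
The plan is to apply Theorem~\ref{thm_limsupGauss} to a normalised version of the process $Z_t := e^{-\alpha(t)/\eps}\xi_t$. Using $\alpha(t,u) = \alpha(t) - \alpha(u)$ one obtains the Wiener-integral representation
\[
Z_t = \int_0^t e^{-\alpha(u)/\eps}\,\frac{\sigma}{\eps^H}\,F(u)\,\diff W^H_u,
\]
so $Z$ is a centred Gaussian process with a.s.~continuous paths. This is the natural object for Theorem~\ref{thm_limsupGauss}, provided one first rescales to unit variance and verifies decay of correlations.

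First I would compute $R(t,t) := \var(Z_t) = e^{-2\alpha(t)/\eps}\var(\xi_t)$ using~\eqref{relSlowManVar}, which gives $R(t,t) = \sigma^2 e^{-2\alpha(t)/\eps}\zeta(t) - \sigma^2\zeta(0)$. Assumption~\ref{assStable} forces $-\alpha(t) \geq at \to \infty$, and together with the uniform lower bound $\zeta(t)\geq \zeta^->0$ this shows $R(t,t)\to\infty$ and, in particular, $R(t,t)\geq 1$ for $t$ beyond some $T^*$. I would then set $\tilde Z_t := Z_t/\sqrt{R(t,t)}$ on $[T^*,\infty)$, a centred Gaussian process of unit variance, and apply Theorem~\ref{thm_limsupGauss} to the time-shifted process $\tilde Z_{\cdot + T^*}$.

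The hard part will be showing that the correlation $\rho(s,t) := R(s,t)/\sqrt{R(s,s)R(t,t)}$ vanishes as $t-s\to\infty$. Proposition~\ref{cov:integral} provides an explicit double integral for $R(s,t)$; the elementary estimate $e^{-\alpha(u)/\eps}\leq e^{-\alpha(s)/\eps}e^{-a(s-u)/\eps}$ for $u\in[0,s]$ (and analogously for $v\in[0,t]$, both consequences of Assumption~\ref{assStable}) lets me factor out $e^{-\alpha(s)/\eps - \alpha(t)/\eps}$. After the substitution $u' := s-u$, $v' := t-v$, the question reduces to proving
\[
I(s,t) := \int_0^s\!\!\int_0^t e^{-a(u'+v')/\eps}F(s-u')F(t-v')\bigl|(t-s)+u'-v'\bigr|^{2H-2}\,\diff u'\diff v' \;\longrightarrow\; 0
\]
as $t-s\to\infty$. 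I would split the integration region at $u'\vee v' = (t-s)/2$: on the ``small'' part the kernel is dominated by $((t-s)/2)^{2H-2}$ and decays polynomially (since $2H-2<0$), while on the complement at least one of $e^{-au'/\eps}$, $e^{-av'/\eps}$ is at most $e^{-a(t-s)/(4\eps)}$ and decays exponentially. A matching lower bound $\sqrt{R(s,s)R(t,t)}\geq c\,e^{-\alpha(s)/\eps-\alpha(t)/\eps}$ from the variance formula closes the estimate.

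Once both hypotheses of Theorem~\ref{thm_limsupGauss} are in place for the shifted process (continuity of $R$ in both arguments is clear from its integral representation), the theorem yields $\limsup_{t\to\infty}\tilde Z_t = \infty$ a.s.; multiplying by the deterministic factor $\sqrt{R(t,t)}\geq 1$ immediately gives $\limsup_{t\to\infty} Z_t = \infty$ a.s., which is precisely the statement of the corollary.
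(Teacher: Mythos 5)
Your proposal is correct and follows essentially the same route as the paper's proof: both normalize $e^{-\alpha(t)/\eps}\xi_t$ to a unit-variance Gaussian process, verify the hypotheses of Theorem~\ref{thm_limsupGauss} by splitting the covariance integral into a region where the power-law kernel $|u-v|^{2H-2}$ is small and a region where an exponential factor is small, and then transfer the conclusion back to the unnormalized process. The only differences are cosmetic: the paper fixes $s$ and partitions the $u$-integral at $s+T$, while you substitute $u'=s-u$, $v'=t-v$ and partition at $u'\vee v'=(t-s)/2$; you are also slightly more explicit that $R(t,t)\geq 1$ eventually (needed to pass from $\limsup\tilde Z_t=\infty$ to $\limsup Z_t=\infty$), a step the paper leaves implicit.
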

\begin{proof}
Define $Z_t := \frac{e^{-\alpha(t)/\eps}\xi_t}{\sqrt{\var\left(e^{-\alpha(t)/\eps}\xi_t\right)}}.$ Then $\mbox{Var}(Z_{t})=\e( Z_t Z_t)=1$ by construction. To prove the second assumption of Theorem~\ref{thm_limsupGauss}, observe that 
$$R(t,s) = \e\left[\frac{\xi_t}{\sqrt{\var\left(\xi_t\right)}} \frac{\xi_s}{\sqrt{\var\left(\xi_s\right)}}\right].$$
Now note that 
%we can w.l.o.g. restrict ourselves to the interval $[\delta, \infty)$ for $\delta > 0$ small and that the 
$\var(\xi_t)$ is bounded for $t \in [0, \infty).$ %(An easy way to see this is to consider the exponential fast approach towards the slow manifold, which is uniformly bounded, given in \eqref{relSlowManVar}.)
This means it suffices to prove for every $s >0$
\begin{align*}
\e\left[\xi_t \xi_s\right] \rightarrow 0 \text{ as } t \rightarrow 0.
\end{align*}
The correlation function is given by
\begin{align*}
&\e\left[\xi_t \xi_s\right] \\
&= \frac{\sigma^2}{\eps^{2H}} \int_0^s \int_0^t e^{\alpha(t,u)/\eps} e^{\alpha(s,v)/\eps} F(u) F(v) H (2H - 1) \left| u - v \right|^{2H-2} \diff u \diff v \\
&\leq \frac{\sigma^2F^2_+}{\eps^{2H}} \int_0^s \int_0^t e^{\alpha(t,u)/\eps} e^{\alpha(s,v)/\eps}H (2H - 1) \left| u - v \right|^{2H-2} \diff u \diff v,
\end{align*}
where $F_+ := \sup\limits_{r \in [0, \infty)} F(r).$
Let $\eps > 0.$ Then there is $T>0$ such that for $a := \sup\limits_{r \in [0, \infty)} a(r)$
\begin{align*}
H (2H - 1) \frac{\sigma^2F^2_+}{\eps^{2H}} |T|^{2H-2} \frac{1}{|a|^2} < \frac{\eps}{2}.
\end{align*}
Now choose $t_0> s + T$ such that for all $t \geq t_0$
\begin{align*}
&\frac{\sigma^2F^2_+}{\eps^{2H}} \int_0^s \int_{0}^{t} e^{\alpha(t,u)/\eps} e^{\alpha(s,v)/\eps}H (2H - 1) \left| u - v \right|^{2H-2} \diff u \diff v \\
=\: &\frac{\sigma^2F^2_+}{\eps^{2H}} e^{\alpha(t,s+T)} \underbrace{\int_0^s \int_{0}^{s+T} e^{\alpha(s+T,u)/\eps} e^{\alpha(s,v)/\eps}H (2H - 1) \left| u - v \right|^{2H-2} \diff u \diff v}_{\text{independent of }t} < \frac{\eps}{2},
\end{align*}
where we have used the semi-group property of $e^{\alpha(t,u)/\eps}.$
Putting this together we get for all $t \geq t_0$
\begin{align*}
&\e\left[\xi_t \xi_s\right] \\
&\leq \frac{\sigma^2F^2_+}{\eps^{2H}} \int_0^s \int_0^t e^{\alpha(t,u)/\eps} e^{\alpha(s,v)/\eps}H (2H - 1) \left| u - v \right|^{2H-2} \diff u \diff v \\
&=\frac{\sigma^2F^2_+}{\eps^{2H}} \int_0^s \int_0^{s+T} e^{\alpha(t,u)/\eps} e^{\alpha(s,v)/\eps}H (2H - 1) \left| u - v \right|^{2H-2} \diff u \diff v \\
&\quad+ \frac{\sigma^2F^2_+}{\eps^{2H}} \int_0^s \int_{s+T}^t e^{\alpha(t,u)/\eps} e^{\alpha(s,v)/\eps}H (2H - 1) \left| u - v \right|^{2H-2} \diff u \diff v \\
&< \frac{\eps}{2} +H (2H - 1) \frac{\sigma^2F^2_+}{\eps^{2H}} |T|^{2H-2} \int_{s+T}^t e^{\alpha(t,u)/\eps} \diff u \int_0^se^{\alpha(s,v)/\eps}  \diff v \\
&\leq \frac{\eps}{2} + H (2H - 1) \frac{\sigma^2F^2_+}{\eps^{2H}} |T|^{2H-2} \frac{1}{|a|^2} < \eps.
\end{align*}
Now, Theorem~\ref{thm_limsupGauss} proves the claim.
\end{proof}
\begin{rem}
	Note that one cannot directly apply classical probabilistic results such as the Borel-Cantelli Lemma, law of iterated logarithm or ergodic theorems to prove Corollary~\ref{cor_limsupxi}, since the process $\xi$ has neither stationary nor independent increments.
\end{rem}
\newpage

\addcontentsline{toc}{section}{Bibliography}


\begin{thebibliography}{10}

\bibitem{Ashkenazyetal}
Y.~Ashkenazy and D.R.~Baker and H.~Gildor and S.~Havlin.
\newblock Nonlinearity and multifractality of climate change in the past 420,000 years.
\newblock {\em  Geophysical research letters}, 30(22), 2003.

\bibitem{Barbozaetal}
L.~Barboza and B.~Li and M.P.~Tingley and F.G.~Viens.
\newblock Reconstructing past temperatures from natural proxies and estimated climate forcings using short-and long-memory models.
\newblock {\em  The Annals of Applied Statistics}, 8(4):1966--2001, 2014.

\bibitem{BerglundGentz8}
N.~Berglund and B.~Gentz.
\newblock The effect of additive noise on dynamical hysteresis.
\newblock {\em Nonlinearity}, 15(3):605--632, 2002.

\bibitem{NoiseSlowFastSys}
N.~Berglund and B.~Gentz.
\newblock {\em Noise-Induced Phenomena in Slow-Fast Dynamical Systems}.
\newblock Springer-Verlag London, 2006.

\bibitem{StochCalcforfBm}
F.~{Biagini}, Y.~{Hu}, B.~{\O ksendal}, and T.~{Zhang}.
\newblock {\em Stochastic Calculus for Fractional Brownian Motion and
  Applications}.
\newblock Springer-Verlag London Limited, 2008.

\bibitem{BlenderFraedrich}
R. Blender and K. Fraedrich
\newblock {Long time memory in global warming simulations}.
\newblock {\em Geophys. Res. Lett.}, 30(14), 2003.

\bibitem{bogachev2007measure}
V.I. Bogachev.
\newblock {\em Measure Theory}.
\newblock Number Bd. 1 in Measure Theory. Springer Berlin Heidelberg, 2007.

\bibitem{CMaslowski}
P.~{\v{C}}oupek and B.~Maslowski.
\newblock {Stochastic evolution equations with Volterra noise}.
\newblock {\em Stochastic Process. Appl.}, 127(3):877--900, 2017.

\bibitem{DavidsenGriffin}
J. Davidsen and J. Griffin
\newblock {Volatility of unevenly sampled fractional Brownian motion: An application to ice core records}.
\newblock {\em Phys. Rev. E}, 81(1):016107, 2010.

\bibitem{Malliavin1}
L.~Decreusefond et~al.
\newblock Stochastic analysis of the fractional {B}rownian motion.
\newblock {\em Potential Anal.}, 10(2):177--214, 1999.

\bibitem{decreusefond2008hitting}
L.~Decreusefond and D.~Nualart.
\newblock {Hitting times for Gaussian processes}.
\newblock {\em Ann. Probab.}, 36(1):319--330, 2008.

\bibitem{Dijkstra}
H.A. Dijkstra.
\newblock {\em Nonlinear Climate Dynamics}.
\newblock CUP, 2013.

\bibitem{Eichneretal}
J.F. Eichner and E. Koscielny-Bunde and A. Bunde and S. Havlin and H.J. Schellnhuber.
\newblock Power-law persistence and trends in the atmosphere: A detailed study of long temperature records.
\newblock {\em Phys. Rev. E}, 68(4):046133, 2003.

\bibitem{Fenichel4}
N.~Fenichel.
\newblock {Geometric Singular Perturbation Theory for Ordinary Differential
  Equations}.
\newblock {\em J. Differential Equat.}, 31(1):53--98, 1979.

\bibitem{neuron}
F.-Y. Gao, Y.-M. Kang, X.~Chen, and G.~Chen.
\newblock Fractional {G}aussian noise-enhanced information capacity of a
  nonlinear neuron model with binary signal input.
\newblock {\em Phys. Rev. E}, 97(5):052142, 2018.

\bibitem{Gardiner}
C.~Gardiner.
\newblock {\em Stochastic Methods}.
\newblock Springer, Berlin Heidelberg, Germany, 4th edition, 2009.

\bibitem{GarridoLuSchmalfuss1}
M.~J. Garrido-Atienza, K.~Lu, and B.~Schmalfuss.
\newblock Local pathwise solutions to stochastic evolution equations driven by
  fractional {B}rownian motions with {H}urst parameter ~${H} \in (1/3, 1/2] $.
\newblock {\em Discrete Cont. Dyn.-B}, 20, 11 2014.

\bibitem{GnannKuehnPein}
M.~Gnann, C.~Kuehn, and A.~Pein.
\newblock Towards sample path estimates for fast-slow {SPDEs}.
\newblock {\em Euro. J. Appl. Math.}, 30(5):1004--1024, 2019.

\bibitem{GubinelliLejayTindel}
M.~Gubinelli, A.~Lejay, and S.~Tindel.
\newblock Young integrals and {SPDE}s.
\newblock {\em Potential Anal.}, 25(4):307--326, 2006.

\bibitem{GubinelliTindel}
M.~Gubinelli, S.~Tindel, et~al.
\newblock Rough evolution equations.
\newblock {\em Ann. Probab.}, 38(1):1--75, 2010.

\bibitem{GH}
J.~Guckenheimer and P.~Holmes.
\newblock {\em Nonlinear Oscillations, Dynamical Systems, and Bifurcations of
  Vector Fields}.
\newblock Springer, New York, NY, 1983.

\bibitem{Hairer}
M.~Hairer and X.-M. Li.
\newblock Averaging dynamics driven by fractional {B}rownian motion.
\newblock {\em arXiv:1902.11251}, 2019.

\bibitem{Hek}
G.~Hek.
\newblock Geometric singular perturbation theory in biological practice.
\newblock {\em J. Math. Biol.}, 60(3):347--386, 2010.

\bibitem{HesseNeamtu1}
R.~Hesse and A.~Neam\c{t}u.
\newblock Local mild solutions for rough stochastic partial differential
  equations.
\newblock {\em J.~Differential Equat.}, 267(11):6480--6538, 2019.

\bibitem{Jones}
C.K.R.T. Jones.
\newblock {Geometric Singular Perturbation Theory}.
\newblock In {\em Dynamical Systems (Montecatini Terme, 1994)}, volume 1609 of
  {\em Lect. Notes Math.}, pages 44--118. Springer, 1995.

\bibitem{KaperEngler}
H.~Kaper and H.~Engler.
\newblock {\em Mathematics and Climate}.
\newblock SIAM, 2013.

\bibitem{Kaerner}
O.~K{\"a}rner.
\newblock On nonstationarity and antipersistency in global temperature series.
\newblock {\em Journal of Geophysical Research: Atmospheres}, 107(D20):ACL-1, 2002.

\bibitem{Kolmogorov}
A.N. Kolmogorov.
\newblock {Wienersche Spiralen und einige andere interessante Kurven im
  Hilbertschen Raum}.
\newblock {\em Comptes Rendus (Doklady) Acad. Sci. URSS}, 26(2):115--118, 1940.

\bibitem{KoutsoyiannisMontanari}
D.~Koutsoyiannis and A.~Montanari.
\newblock Statistical analysis of hydroclimatic time series: uncertainty and
  insights.
\newblock {\em Water Resources Research}, 43(5):W05429, 2007.

\bibitem{KuehnCT2}
C.~Kuehn.
\newblock {A mathematical framework for critical transitions: normal forms,
  variance and applications}.
\newblock {\em J. Nonlinear Sci.}, 23(3):457--510, 2013.

\bibitem{MultTimeScaleDyn}
C.~Kuehn.
\newblock {\em {Multiple Time Scale Dynamics}}, volume 191 of {\em Applied
  Mathematical Sciences}.
\newblock Springer, 2015.

\bibitem{KuehnMalavoltaRasmussen}
C.~Kuehn, G.~Malavolta, and M.~Rasmussen.
\newblock Early warning signs for bifurcations with bounded noise.
\newblock {\em J. Math. Anal. Appl.}, 464(1):58--77, 2018.

\bibitem{Kuznetsov}
Y.A. Kuznetsov.
\newblock {\em Elements of Applied Bifurcation Theory}.
\newblock Springer, New York, NY, 3rd edition, 2004.

\bibitem{MaslowskiNualart}
B.~Maslowski and D.~Nualart.
\newblock Evolution equations driven by a fractional {B}rownian motion.
\newblock {\em J. Funct. Anal.}, 202(1):277 -- 305, 2003.

\bibitem{Mishura}
Y.~Mishura.
\newblock Financial applications of fractional Brownian motion.
\newblock {\em Stochastic Calculus for Fractional Brownian Motion and Related
  Processes}, pages 301--326, 2008.

\bibitem{Mobergetal}
A.~Moberg, D.M. Sonechkin, K.~Holmgren, N.M. Datsenko, and W.~Karl{\'e}n.
\newblock Highly variable {Northern Hemisphere} temperatures reconstructed from
  low- and high-resolution proxy data.
\newblock {\em Nature}, 433(7026):613, 2005.

\bibitem{Molchan}
G.M. Molchan.
\newblock On the maximum of a fractional {B}rownian motion.
\newblock {\em Theory Probab. Appl.}, 44(1):97--102, 2000.

\bibitem{SelAspOfFBM}
I.~Nourdin.
\newblock {\em Selected Aspects of Fractional Brownian Motion}.
\newblock Springer-Verlag Mailand, 2012.

\bibitem{nerve}
D.~J. Odde, E.~M. Tanaka, S.~S. Hawkins, and H.~M. Buettner.
\newblock Stochastic dynamics of the nerve growth cone and its microtubules
  during neurite outgrowth.
\newblock {\em Biotechnol. Bioeng}, 50(4):452--461, 1996.

\bibitem{AsMethforGaussProc}
V.I. {Piterbarg}.
\newblock {\em Asymptotic Methods in the Theory of Gaussian Processes and
  Fields}.
\newblock American Mathematical Society, 1996.

\bibitem{RangarajanSant}
G.~Rangarajan and D.A. Sant.
\newblock A climate predictability index and its applications.
\newblock {\em Geophys. Res. Lett.}, 24(10):1239--1242, 1997.

\bibitem{na}
A.~Richard, P.~Orio, and E.~Tanr{\'e}.
\newblock An integrate-and-fire model to generate spike trains with long-range
  dependence.
\newblock {\em Journal of computational neuroscience}, pages 1--16, 2018.

\bibitem{KuehnRomano}
F.~Romano and C.~Kuehn.
\newblock Analysis and predictability for tipping points with leading-order
  nonlinear terms.
\newblock {\em Int. J. Bifurc. Chaos}, 28(8):1850103, 2018.

\bibitem{SadhuKuehn}
S.~Sadhu and C.~Kuehn.
\newblock Stochastic mixed-mode oscillations in a three-species predator-prey
  model.
\newblock {\em Chaos}, 28(3):033606, 2017.

\bibitem{Slepian}
D.~Slepian.
\newblock The one-side barrier problem for {G}aussian noise.
\newblock {\em Bell System Tech.}, 41(2):436--501, 1962.

\bibitem{Sonechkin}
D.M.~Sonechkin.
\newblock Climate dynamics as a nonlinear Brownian motion.
\newblock {\em International Journal of Bifurcation and Chaos}, 8(04):799--803, 1998.

\bibitem{Stone}
H.~Stone.
\newblock Calibrating rough volatility models: a convolutional neural network
  approach.
\newblock {\em arXiv:1812.05315}, 2018.

\bibitem{SuRubinTerman}
J.~Su, J.~Rubin, and D.~Terman.
\newblock Effects of noise on elliptic bursters.
\newblock {\em Nonlinearity}, 17(1):133--157, 2004.

\bibitem{networkt}
H.~Rootz\'en T.~Mikosch, S.~Resnick and A.~Stegeman.
\newblock Is network traffic appriximated by stable {L}\'evy motion or
  fractional {B}rownian motion?
\newblock {\em Ann. Appl. Probab.}, 12(1):23--68, 2002.

\bibitem{Tikhonov}
A.N. Tikhonov.
\newblock Systems of differential equations containing small parameters in the
  derivatives.
\newblock {\em Mat. Sbornik N. S.}, 31(3):575--586, 1952.

\bibitem{Verhulst}
F.~Verhulst.
\newblock {\em Methods and Applications of Singular Perturbations: Boundary
  Layers and Multiple Timescale Dynamics}.
\newblock Springer, 2005.

\bibitem{Weiss}
M.~Weiss.
\newblock Single-particle tracking data reveal anticorrelated fractional
  {B}rownian motion in crowded fluids.
\newblock {\em Phys. Rev. E}, 88(1):010101, 2013.

\bibitem{Xu}
Y.~Xu, R.~Guo, and W.~Xu.
\newblock A limit theorem for the solutions of slow-fast systems with
  fractional {B}rownian motion.
\newblock {\em Theoretical and Applied Mechanics Letters}, 4(1):013003, 2014.

\bibitem{Yuanetal}
N.~Yuan, Z.~Fu, and S.~Liu.
\newblock Extracting climate memory using fractional integrated statistical model: A new perspective on climate prediction.
\newblock {\em Scientific reports}, 4:6577, 2014.

\end{thebibliography}
\end{document}